\newcommand*{\xhat}[1]{#1\kern-0.65em\hat{\phantom{#1}}}
\newcommand*{\xtilde}[1]{#1\kern-0.75em\tilde{\phantom{#1}}}
\newcommand*{\xhatt}[1]{#1\kern-0.425em\hat{\phantom{#1}}}
\numberwithin{equation}{section}
\newtheorem{ass}{Assumption}[section]
\newtheorem{thm}[ass]{Theorem}
\newtheorem{pro}[ass]{Proposition}
\newtheorem{cor}[ass]{Corollary}
\newtheorem{lma}[ass]{Lemma}
\theoremstyle{definition}
\newtheorem{dfn}[ass]{Definition}
\begin{document}
\title{Modeling temporally uncorrelated components for complex-valued stationary processes}

\renewcommand{\thefootnote}{\fnsymbol{footnote}}

\author{Niko Lietz\'{e}n\footnotemark[1] \footnotemark[2] \, and \,  Lauri Viitasaari\footnotemark[3] \, and \, Pauliina Ilmonen\footnotemark[2]}

\footnotetext[1]{Correspondence to: Niko Lietz\'{e}n, E-mail: niko.lietzen@aalto.fi}
\footnotetext[2]{Aalto University School of Science, Department of Mathematics and Systems Analysis}
\footnotetext[3]{Aalto University School of Business, Department
of Information and Service Management}

\maketitle

\begin{abstract}
\noindent
We consider a complex-valued linear mixture model, under discrete weakly stationary processes. We recover latent components of interest, which have undergone a linear mixing. We study asymptotic properties of a classical unmixing estimator, that is based on simultaneous diagonalization of the covariance matrix and an autocovariance matrix with lag $\tau$. Our main contribution is that our asymptotic results can be applied to a large class of  processes. In related literature, the processes are typically assumed to have weak correlations. We extend this class and consider the unmixing estimator under stronger dependency structures. In particular, we analyze the asymptotic behavior of the unmixing estimator under both, long- and short-range dependent complex-valued processes. Consequently, our theory covers unmixing estimators that converge slower than the usual $\sqrt{T}$ and unmixing estimators that produce non-Gaussian asymptotic distributions. The presented methodology is a powerful prepossessing tool and highly applicable in several fields of statistics. Complex-valued processes are frequently encountered in, for example, biomedical applications and signal processing. In addition, our approach can be applied to model real-valued problems that involve temporally uncorrelated pairs. These are encountered in, for example, applications in finance.
\end{abstract}

{\small
\medskip
\noindent
\textbf{Keywords:}
blind source separation, multivariate analysis, non-central limit theorems, asymptotic theory, long-range dependency
\medskip

\noindent
\textbf{2010 Mathematics Subject Classification:} 	62H12, 60F05, 60G15, 60G10, 94A12, 94A08}

\section{Introduction}
In modern statistics, there is an increasing demand for theoretically solid methodology, which can be applied beyond standard real-valued data, into the realm of more exotic data structures. In this paper, we consider a complex-valued linear mixture model, based on temporally uncorrelated components, in the context of discrete weakly stationary processes. We aim to find latent processes of interest, when only linear mixtures of them are observable. The recovery of the latent processes of interest is referred to as the unmixing procedure. The properties of the recovered processes themselves can be the source of interest, or alternatively, the approach can be used to reduce multivariate models into several univariate models, which can simplify the modeling burden.

In the context of linear mixture models, the assumption of uncorrelated components, or stronger conditions that imply uncorrelated components, is considered to be natural in several applications, for example, in finance and signal processing, see the article \cite{fan2008modelling} and the books \cite{alexander2001market,comon2010}.
Applications, where the observed time series are naturally complex-valued are frequent in, e.g., signal processing and biomedical applications, see \cite{zarzoso2009robust,li2011application}. In such applications, the interest can lie in the shapes of the unobservable signals, such as, the shapes of three dimensional image valued signals, which correspond to different parts of the brain under different activities. In the signal processing literature, the problem is often referred to as the blind source separation (BSS) problem. In the BSS literature, it has been argued that discarding the special complex-valued structure can lead to lost information, see \cite{adali2014optimization}. Thus, it is often beneficial to directly work with complex-valued signals in such applications. We wish to emphasize that our $\mathbb{C}^d$-valued model does not directly correspond to existing $\mathbb{R}^{2d}$ valued models, e.g., the one in \cite{miettinen2012}.  For a collection of BSS applications, see \cite{comon2010}.

In parallel to the signal processing community, linear mixture models, with similar model assumptions as in BSS literature, have recently received notable attention in finance, see for example \cite{fan2008modelling, matteson2011dynamic}. In financial applications, the term blind source separation is rarely used, and usually more descriptive model naming conventions are utilized. Note that, our complex-valued approach is highly applicable in many real-valued financial applications. In our approach, we assume little concerning the relationship between the real- and imaginary-parts of a single component.  Thus, from the real-valued perspective, the problem can be equivalently considered as modeling real-valued temporally uncorrelated pairs, where the elements contained in a single pair are not necessarily uncorrelated. It turns out, that the algebra is often nicer in the complex-valued case, when compared to the notation required by the real-valued counterpart. Once the temporally uncorrelated components are recovered, one can then, for example, model volatilities bivariately (or univariately) or asses risk by modeling tail behavior with the tools of  bivariate (or univariate) extreme value theory. Moreover, our approach is natural in applications, where a single observation is vector valued, that is, the observations have both a magnitude and a direction, e.g., modeling the latent components of wind at an airport.

The main focus in this paper is on asymptotic behavior of a classical unmixing estimator.
We consider an algorithm that is identical to the so-called Algorithm for Multiple Unknown Signals Extraction (AMUSE), \cite{tong1990amuse}. In the financial side, asymptotic properties of an alternative approach, with slightly differing model assumptions compared to our approach,  are given in, e.g., \cite{fan2008modelling}. Asymptotics, in the context of real-valued BSS, have been considered in, e.g., \cite{miettinen2012,miettinen2016separation,miettinen2018extracting}. Compared to the above mentioned approaches,
we consider a substantially wider class of processes. In particular, we analyze the asymptotic behavior of the corresponding estimators under both, long- and short-range dependent complex-valued processes. Furthermore, we take a semi-parametric approach, that is, our theory is not limited to specific parametric family of distributions.
As a pinnacle of the novel theory presented in this paper, we consider processes without summable autocovariance structures, which causes  the limiting distribution of the corresponding unmixing estimator to be non-Gaussian. Instead of using the classical central limit theorem, we take a modern approach and utilize general central limit theorem type results that are also applicable for processes with strong temporal dependencies. Moreover,  we consider convergence rates that differ from the usual $\sqrt{T}$. We wish to emphasize that modeling long-range dependent processes, that is, processes without summable autocovariance structures, are of paramount importance in many financial applications.

The paper is structured as follows. In Section \ref{sec:ctimeseries}, we recall some basic theory regarding complex-valued random variables and we introduce the notation that we follow in this paper. In Section \ref{sec:bssmodel}, we present the temporally uncorrelated components mixing model and solutions to the corresponding unmixing problem at the population level. In Section \ref{sec:asympBSS}, we formulate the estimation procedure and study the asymptotic properties of the estimators. In Section \ref{sec:image}, we a real-data example involving photographs, which can be presented as complex-valued time series.   All the proofs of this paper are presented in Appendix \ref{app:B}.

\section{Random variables in the complex plane}
\label{sec:ctimeseries}
In this section, we review some basic definitions and classical estimators for complex-valued random variables. Additionally, we recall the definition of the complex multivariate Gaussian distribution and the classical complex-valued central limit theorem.

Throughout, let $(\Omega,\mathscr{F},\mathbb{P})$ be a common probability space and let $\textbf{z}_\textnormal{\tiny{\textbullet}} \coloneqq (\textbf{z}_t)_{t\in \mathbb{N}}$ be a collection of random variables $\textbf{z}_t : \Omega \rightarrow \mathbb{C}^d$, $t \in \mathbb{N}  = \{1,2,\ldots\}$, where the dimension parameter $d$ is a finite constant. Furthermore, let $z_t^{(k)} = \textnormal{pr}_k \circ \textbf{z}_t $, where $ \textnormal{pr}_k$ is a projection to the $k$th complex coordinate. We refer to the process ${z}^{(k)}_\textnormal{\tiny{\textbullet}} \coloneqq ({z}^{(k)}_t)_{t\in \mathbb{N}}$ as the $k$th component of the $\mathbb{C}^d$-valued stochastic process $\textbf{z}_\textnormal{\tiny{\textbullet}}$. Note that the components can be expressed in the form $z_t^{(k)} = a_t^{(k)} + ib_t^{(k)} $, where $a_t^{(k)}, b_t^{(k)}$ are real-valued $\forall k \in \left\{1,2,\ldots,d\right\}$ and $i$ is the imaginary unit. We denote the complex conjugate of $\textbf{z}_t$ as  $\textbf{z}_t^* $ and we denote the conjugate transpose of $\textbf{z}_t$ as $\textbf{z}_t^\textnormal{H} \coloneqq (\textbf{z}_t^*)^\top$. Furthermore, we use $\mathscr{B}(\mathbb{C}^d)$ and $\mathscr{B}(\mathbb{R}^{2d})$  to denote the Borel-sigma algebras on $\mathbb{C}^d$ and $\mathbb{R}^{2d}$, respectively.

We use the following notation for the multivariate mean and the unsymmetrized autocovariance matrix  with lag $\tau \in \left\{0\right\} \cup \mathbb{N} $,
\begin{align*}
&\boldsymbol{\mu}_{\textbf{z}_t} = \mathbb{E}\left[\textbf{z}_t\right]\in \mathbb{C}^d,
\quad
\ddot{\textbf{S}}_{\tau}\left[ \textbf{z}_t \right] = \mathbb{E}\left[\left(\textbf{z}_t - \boldsymbol{\mu}_{\textbf{z}_t}  \right)\left( \textbf{z}_{t+\tau} - \boldsymbol{\mu}_{\textbf{z}_{t+\tau}} \right)^\textnormal{H}\right]\in \mathbb{C}^{d\times d}.
\end{align*}
Furthermore, we use the following notation for the symmetrized autocovariance matrix with lag $\tau \in \left\{0\right\} \cup \mathbb{N} $,
\begin{align*}
{\textbf{S}}_{\tau}\left[ \textbf{z}_t \right] = \frac{1}{2} \left[ \ddot{\textbf{S}}_{\tau}\left[ \textbf{z}_t \right] +\left( \ddot{\textbf{S}}_{\tau}\left[ \textbf{z}_t \right]\right)^\textnormal{H} \right] \in \mathbb{C}^{d\times d}.
\end{align*}
In the case of univariate stochastic processes $x_\textnormal{\tiny{\textbullet}}$ and $y_\textnormal{\tiny{\textbullet}}$, we use $\ddot{\textnormal{S}}_{\tau}[x_t,y_s]$ and ${\textnormal{R}}_{\tau}[x_t,y_s]$   to denote,
\begin{align*}
&\ddot{\textnormal{S}}_{\tau}[x_t,y_s] = \mathbb{E}\left[ (x_t - \mu_{x_t})( y_{s+\tau} - \mu_{y_{s+\tau}})^* \right] \quad \textnormal{ and }\\
&\textnormal{R}_{\tau}[x_t,y_s] = \ddot{\textnormal{S}}_{\tau}[x_t,y_s] + \ddot{\textnormal{S}}_{\tau}[y_s,x_t].
\end{align*}
Note that the unsymmetrized autocovariance matrix $\ddot{\textbf{S}}_{\tau}$ is not necessarily conjugate symmetric, i.e., Hermite symmetric or Hermitian, and it can have complex-valued diagonal entries. In contrast, the symmetrized autocovariance matrix ${\textbf{S}}_{\tau}$ is always conjugate symmetric, that is, ${\textbf{S}}_{\tau} = {\textbf{S}}_{\tau}^\textnormal{H}$, and the diagonal entries of the symmetrized autocovariance matrix are always real-valued.

In  the literature, the definition of stationarity for complex-valued stochastic processes varies. In this paper, we use the following definitions for strict stationarity and weak stationarity.
\begin{dfn}
\label{def:strictstationarity}
The $\mathbb{C}^d$-valued process $\textbf{z}_\textnormal{\tiny{\textbullet}}  \coloneqq  \left(\textbf{z}_t\right)_{t\in \mathbb{N}} $ is strictly stationary if for any $\tau \in   \mathbb{N}$ and for any finite subset $S\subset \mathbb{N}$,
\begin{align*}
\left\{ \textbf{z}_{t+\tau}: t\in S\right\} \textnormal{ has the same distribution as } \left\{ \textbf{z}_{t}: t\in S\right\}.
\end{align*}
\end{dfn}
Similarly as in the real-valued case, the distribution function of the $\mathbb{C}^d$-valued random vector $\textbf{z}_t$ is the probability measure $P_{\textbf{z}_t}$ on $(\mathbb{C}^d,\mathscr{B}(\mathbb{C}^d))$ defined as,
\begin{align*}
P_{\textbf{z}_t} = \mathbb{P}\left[ \left\{ \omega \in \Omega \mid  \textbf{z}_t(\omega) \in B  \right\} \right], \quad \textnormal{for $B \in \mathscr{B}(\mathbb{C}^d)$.}
\end{align*}

\begin{dfn}
\label{def:weakstationarity}
The $\mathbb{C}^d$-valued process $\textbf{z}_\textnormal{\tiny{\textbullet}}  \coloneqq  \left(\textbf{z}_t\right)_{t\in \mathbb{N}} $ is weakly stationary if the components of $\textbf{z}_\textnormal{\tiny{\textbullet}} $ are square integrable, i.e.,
\begin{align*}
\mathbb{E}\left[ \left|z^{(k)}_t\right|^2 \right] < +\infty \quad \textnormal{$\forall k\in\{1,2,\ldots,d\}$, $\forall t\in \mathbb{N}$},
\end{align*}
and if for  any $\tau \in \left\{0\right\} \cup \mathbb{N}$ and for any pair  $\left(u,v\right) \in   \mathbb{N}\times   \mathbb{N}  $,
\begin{align*}
\mathbb{E}[ \textbf{z}_u ]  = \mathbb{E}[ \textbf{z}_v] \quad \textnormal{ and } \quad \ddot{\textbf{S}}_\tau[\textbf{z}_{u}] =  \ddot{\textbf{S}}_\tau[\textbf{z}_{v}] .
\end{align*}
\end{dfn}

We next consider finite sample estimators for location and autocovariance. Let $ \textbf{Z} \coloneqq  \left(\textbf{Z}_j \right)_{j\in \mathcal{T}}$, $\mathcal{T} = \left\{ 1,2,\ldots, T\right\}$, be a sampled  process generated by $\textbf{z}_\textnormal{\tiny{\textbullet}}$, where $\textbf{Z}_j$  is $\mathbb{C}^d$-valued for every $j \in \mathcal{T}$. We use the following classical finite sample estimators for the mean vector and the autocovariance matrix with lag $\tau \in \left\{0,1,\ldots,T-1\right\}$,
\begin{align*}
& \hat{\boldsymbol{\mu}}[{\textbf{Z}}] = \frac{1}{T}\sum_{j=1}^T \textbf{Z}_j,\\
&\tilde{\textbf{S}}_{\tau} \left[ \textbf{Z} \right] = \frac{1}{T-\tau}\sum_{j=1}^{T-\tau} \left(\textbf{Z}_j -  \hat{\boldsymbol{\mu}}[{\textbf{Z}}] \right)\left(\textbf{Z}_{j+\tau}-\hat{\boldsymbol{\mu}}[{\textbf{Z}}]\right)^\textnormal{H},
\end{align*}
where for $\tau =0$, the scaling $1/(T-1)$ is used for $\tilde{\textbf{S}}_{\tau}$. Furthermore, the symmetrized autocovariance matrix estimator with lag $\tau \in \left\{0,1,\ldots,T-1\right\}$ is defined as,
\begin{align*}
\hat{\textbf{S}}_\tau\left[ \textbf{Z}\right] = \frac{1}{2} \left[\tilde{\textbf{S}}_\tau\left[ \textbf{Z} \right]  + \left(\tilde{\textbf{S}}_\tau\left[ \textbf{Z} \right] \right)^\textnormal{H} \right].
\end{align*}
The symmetrized autocovariance matrix estimator is always conjugate symmetric. Note that the estimation of the eigenvectors of a conjugate symmetric matrix is more stable when compared to the estimation of the eigenvectors of a non-conjugate-symmetric matrix.

Let $\textbf{y}_1$ and $\textbf{y}_2$ be $\mathbb{R}^{d}$-valued random vectors such that the concatenated random vector, that is, $\begin{pmatrix}\textbf{y}_1^\top &\textbf{y}_2^\top\end{pmatrix}^\top$, follows a $\mathbb{R}^{2d}$-valued Gaussian distribution.
Then, the $\mathbb{C}^{d}$-valued random vector $\textbf{y} = \textbf{y}_1 +i \textbf{y}_2$ follows the $\mathbb{C}^{d}$-valued Gaussian distribution with the location parameter $\boldsymbol{\mu}_\textbf{y} $, the covariance matrix $\boldsymbol{\Sigma}_\textbf{y}$ and the relation matrix $\textbf{P}_\textbf{y}$, defined as follows,
\begin{align*}
&\boldsymbol{\mu}_\textbf{y} = \mathbb{E}\left[\textbf{y} \right] = \mathbb{E}\left[\textbf{y} _1\right] + i\mathbb{E}\left[ \textbf{y} _2 \right]\in \mathbb{C}^d,
\quad \boldsymbol{\Sigma}_\textbf{y} = \mathbb{E}\left[\left( \textbf{y}  - \boldsymbol{\mu}_\textbf{y}   \right) \left(\textbf{y} - \boldsymbol{\mu}_\textbf{y}  \right)^\textnormal{H} \right] \in \mathbb{C}^{d\times d},\\
& \textbf{P}_\textbf{y} = \mathbb{E}\left[\left( \textbf{y}  - \boldsymbol{\mu}_\textbf{y}   \right) \left(\textbf{y}  - \boldsymbol{\mu}_\textbf{y}  \right)^\top \right] \in \mathbb{C}^{d\times d}.
\end{align*}

We distinguish between complex- and real-valued Gaussian distributions with the number of given parameters --- $\mathcal{N}_d(\boldsymbol{\mu}_y,\boldsymbol{\Sigma})$ denotes a $d$-variate real-valued Gaussian distribution and $\mathcal{N}_d(\boldsymbol{\mu}_y,\boldsymbol{\Sigma},\textbf{P})$ denotes a $d$-variate complex-valued Gaussian distribution.

The classical central limit theorem (CLT) for independent and identically distributed complex-valued random variables is given as follows. Let $\{\textbf{x}_1,\textbf{x}_2,\ldots,\textbf{x}_n\}$ be a collection $\mathbb{C}^d$-variate  i.i.d. random vectors with square integrable components, mean $\boldsymbol{\mu}_{\textbf{x}}$, covariance matrix $\boldsymbol{\Sigma}$ and relation matrix $\textbf{P}$.  Then,
\begin{align*}
\frac{1}{\sqrt{n}}\sum_{j=1}^n\left(  \textbf{x}_j -  \boldsymbol{\mu}_{\textbf{x}} \right) \xrightarrow[n\rightarrow \infty]{\mathcal{D}} \textbf{x} \sim \mathcal{N}_d\left(\textbf{0},\boldsymbol{\Sigma},\textbf{P}\right),
\end{align*}
where $\xrightarrow[ ]{\mathcal{D}}$ denotes convergence in distribution.

We can harness the rich literature on real-valued limiting theorems by utilizing the following lemma.
\begin{lma}
\label{lma:rcconvergence}
Let $\{\textbf{v}_n\}_{n\in \mathbb{N}}$ be a collection of $\mathbb{R}^{2d}$-valued random vectors $\textbf{v}_j^\top = \begin{pmatrix}\textbf{x}_j^\top & \textbf{y}_j^\top \end{pmatrix}$, where $\textbf{x}_j, \textbf{y}_j$ are $\mathbb{R}^d$-valued for every $j \in \mathbb{N}$, and let $\textbf{v}^\top =\begin{pmatrix}\textbf{x}^\top & \textbf{y}^\top\end{pmatrix}$.  Then, $\textbf{v}_n  \xrightarrow[n\rightarrow \infty]{\mathcal{D}} \textbf{v}$ if and only if $\textbf{x}_n + i \textbf{y}_n  \xrightarrow[n\rightarrow \infty]{\mathcal{D}} \textbf{x} + i \textbf{y}  $.
\end{lma}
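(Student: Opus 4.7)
The plan is to reduce the claim to a straightforward application of the continuous mapping theorem via the canonical identification between $\mathbb{R}^{2d}$ and $\mathbb{C}^d$.

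First, I would introduce the map $\varphi : \mathbb{R}^{2d} \to \mathbb{C}^d$ defined by $\varphi\bigl((\textbf{a}^\top, \textbf{b}^\top)^\top\bigr) = \textbf{a} + i \textbf{b}$, and note that it is an $\mathbb{R}$-linear bijection with continuous inverse $\varphi^{-1}(\textbf{z}) = \bigl(\Re(\textbf{z})^\top, \Im(\textbf{z})^\top\bigr)^\top$; in particular $\varphi$ is a homeomorphism between the two metric spaces (using, e.g., the Euclidean norm on $\mathbb{R}^{2d}$ and the modulus norm on $\mathbb{C}^d$, which agree under $\varphi$). Consequently $\varphi$ and $\varphi^{-1}$ are Borel measurable and identify $\mathscr{B}(\mathbb{R}^{2d})$ with $\mathscr{B}(\mathbb{C}^d)$, so that convergence in distribution on $\mathbb{C}^d$ is well-defined through this identification.

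Next, I would invoke the continuous mapping theorem in both directions. For the forward implication, assuming $\textbf{v}_n \xrightarrow{\mathcal{D}} \textbf{v}$, continuity of $\varphi$ yields $\varphi(\textbf{v}_n) \xrightarrow{\mathcal{D}} \varphi(\textbf{v})$, which is precisely $\textbf{x}_n + i \textbf{y}_n \xrightarrow{\mathcal{D}} \textbf{x} + i\textbf{y}$. For the reverse implication, assuming $\textbf{x}_n + i\textbf{y}_n \xrightarrow{\mathcal{D}} \textbf{x} + i \textbf{y}$, continuity of $\varphi^{-1}$ yields $\varphi^{-1}(\textbf{x}_n + i\textbf{y}_n) \xrightarrow{\mathcal{D}} \varphi^{-1}(\textbf{x} + i\textbf{y})$, i.e.\ $\textbf{v}_n \xrightarrow{\mathcal{D}} \textbf{v}$.

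There is no real obstacle here; the only subtle point is to verify that ``convergence in distribution'' as stated on the $\mathbb{C}^d$ side is consistent with pushing the measures through $\varphi$. Since the authors have already introduced $\mathscr{B}(\mathbb{C}^d)$ explicitly and treat complex-valued random vectors as measurable maps into this Borel structure, the identification of the two notions of weak convergence via the homeomorphism $\varphi$ is immediate, and the proof is essentially a one-liner.
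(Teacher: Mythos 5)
Your proof is correct and follows exactly the paper's own argument: both identify $\mathbb{R}^{2d}$ with $\mathbb{C}^d$ via the homeomorphism $(\textbf{a}^\top,\textbf{b}^\top)^\top \mapsto \textbf{a}+i\textbf{b}$ and apply the continuous mapping theorem in each direction. Your additional remark about the consistency of the Borel structures is a harmless elaboration of the same idea.
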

Lemma \ref{lma:rcconvergence} can be, for example, applied to Gaussian vectors in the following manner.
\begin{cor}
\label{cor:eqvrealcomplex}
Let $\{\textbf{v}_n\}_{n\in \mathbb{N}}$ be a collection of $\mathbb{R}^{2d}$-valued random vectors $\textbf{v}_j^\top = \begin{pmatrix}\textbf{x}_j^\top & \textbf{y}_j^\top\end{pmatrix}$, where $\textbf{x}_j, \textbf{y}_j $ are  $\mathbb{R}^d$-valued for every $j \in \mathbb{N}$. Let,
\begin{align*}
{\alpha_n}\sum_{j=1}^n\left(  \textbf{v}_j -  \boldsymbol{\mu}_{\textbf{v}} \right) \xrightarrow[n\rightarrow \infty]{\mathcal{D}} \textbf{v} \sim  \mathcal{N}_{2d}\left(\textbf{0},\boldsymbol{\Sigma}_\textbf{v}\right),
\end{align*}
where $\alpha_n \uparrow \infty$ as $n\rightarrow \infty$ and,
\begin{align*}
\boldsymbol{\mu}_{\textbf{v}} = \begin{pmatrix} \boldsymbol{\mu}_{\textbf{x}} \\ \boldsymbol{\mu}_{\textbf{y}}\end{pmatrix} \quad \textnormal{ and } \quad \boldsymbol{\Sigma}_\textbf{v} = \begin{pmatrix} \boldsymbol{\Sigma}_\textbf{x} &\boldsymbol{\Sigma}_\textbf{xy}  \\ \boldsymbol{\Sigma}_\textbf{xy}^\top  &\boldsymbol{\Sigma}_\textbf{y} \end{pmatrix}.
\end{align*}
Then, for the sequence of $\mathbb{C}^d$-valued random vectors $\textbf{z}_j = \textbf{x}_j + i \textbf{y}_j$, we have that,
\begin{align*}
{\alpha_n}\sum_{j=1}^n\left(  \textbf{z}_j -  \boldsymbol{\mu}_{\textbf{z}} \right) \xrightarrow[n\rightarrow \infty]{\mathcal{D}} \textbf{z} \sim \mathcal{N}_{d}\left(\textbf{0},\boldsymbol{\Sigma}_\textbf{z}, \textbf{P}_\textbf{z}\right),
\end{align*}
where $\boldsymbol{\mu}_{\textbf{z}} = \boldsymbol{\mu}_{\textbf{x}} + i \boldsymbol{\mu}_{\textbf{y}}$, $\boldsymbol{\Sigma}_\textbf{z} = \boldsymbol{\Sigma}_\textbf{x}+ \boldsymbol{\Sigma}_\textbf{y} + i(\boldsymbol{\Sigma}_\textbf{xy}^\top - \boldsymbol{\Sigma}_\textbf{xy})$ and $\textbf{P}_\textbf{z} = \boldsymbol{\Sigma}_\textbf{x}- \boldsymbol{\Sigma}_\textbf{y} + i(\boldsymbol{\Sigma}_\textbf{xy}^\top + \boldsymbol{\Sigma}_\textbf{xy})$
\end{cor}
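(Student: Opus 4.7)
The plan is to reduce everything to Lemma \ref{lma:rcconvergence} and then read off the complex Gaussian parameters by a direct calculation. First, I would identify the partial sum
$\alpha_n \sum_{j=1}^n (\textbf{v}_j - \boldsymbol{\mu}_\textbf{v})$ with the concatenation
$\bigl( (\alpha_n \sum_{j=1}^n (\textbf{x}_j - \boldsymbol{\mu}_\textbf{x}))^\top,\; (\alpha_n \sum_{j=1}^n (\textbf{y}_j - \boldsymbol{\mu}_\textbf{y}))^\top \bigr)^\top$, so that the hypothesis supplies convergence in distribution of this real $\mathbb{R}^{2d}$-valued sum to $\textbf{v} = (\textbf{x}^\top,\textbf{y}^\top)^\top \sim \mathcal{N}_{2d}(\textbf{0},\boldsymbol{\Sigma}_\textbf{v})$. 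Applying Lemma \ref{lma:rcconvergence} with $\textbf{v}_n$ replaced by this partial sum yields
$\alpha_n \sum_{j=1}^n (\textbf{z}_j - \boldsymbol{\mu}_\textbf{z}) \xrightarrow[n\to\infty]{\mathcal{D}} \textbf{x} + i\textbf{y}$, where $\boldsymbol{\mu}_\textbf{z} = \boldsymbol{\mu}_\textbf{x} + i\boldsymbol{\mu}_\textbf{y}$ follows from the linearity of expectation.

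Next, I would invoke the definition of the $\mathbb{C}^d$-valued Gaussian distribution stated just before the corollary: since $(\textbf{x}^\top,\textbf{y}^\top)^\top$ is $\mathbb{R}^{2d}$-Gaussian by assumption, the limit $\textbf{x} + i\textbf{y}$ is, by that very definition, a complex Gaussian random vector. What remains is to identify its location, covariance and relation parameters. The location is $\boldsymbol{\mu}_\textbf{x} + i\boldsymbol{\mu}_\textbf{y}$, which is already $\textbf{0}$ since the limit is centered.

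Then I would compute $\boldsymbol{\Sigma}_\textbf{z}$ and $\textbf{P}_\textbf{z}$ by expanding the defining expectations. Writing $\tilde{\textbf{x}} = \textbf{x} - \boldsymbol{\mu}_\textbf{x}$ and $\tilde{\textbf{y}} = \textbf{y} - \boldsymbol{\mu}_\textbf{y}$, one has
\begin{align*}
\boldsymbol{\Sigma}_\textbf{z} &= \mathbb{E}\bigl[(\tilde{\textbf{x}} + i\tilde{\textbf{y}})(\tilde{\textbf{x}} + i\tilde{\textbf{y}})^\textnormal{H}\bigr] = \mathbb{E}\bigl[\tilde{\textbf{x}}\tilde{\textbf{x}}^\top + \tilde{\textbf{y}}\tilde{\textbf{y}}^\top + i(\tilde{\textbf{y}}\tilde{\textbf{x}}^\top - \tilde{\textbf{x}}\tilde{\textbf{y}}^\top)\bigr],\\
\textbf{P}_\textbf{z} &= \mathbb{E}\bigl[(\tilde{\textbf{x}} + i\tilde{\textbf{y}})(\tilde{\textbf{x}} + i\tilde{\textbf{y}})^\top\bigr] = \mathbb{E}\bigl[\tilde{\textbf{x}}\tilde{\textbf{x}}^\top - \tilde{\textbf{y}}\tilde{\textbf{y}}^\top + i(\tilde{\textbf{x}}\tilde{\textbf{y}}^\top + \tilde{\textbf{y}}\tilde{\textbf{x}}^\top)\bigr],
\end{align*}
and identifying $\mathbb{E}[\tilde{\textbf{x}}\tilde{\textbf{x}}^\top] = \boldsymbol{\Sigma}_\textbf{x}$, $\mathbb{E}[\tilde{\textbf{y}}\tilde{\textbf{y}}^\top] = \boldsymbol{\Sigma}_\textbf{y}$, $\mathbb{E}[\tilde{\textbf{x}}\tilde{\textbf{y}}^\top] = \boldsymbol{\Sigma}_\textbf{xy}$ and $\mathbb{E}[\tilde{\textbf{y}}\tilde{\textbf{x}}^\top] = \boldsymbol{\Sigma}_\textbf{xy}^\top$ gives exactly the formulas claimed.

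There is no real obstacle here; the statement is essentially a coordinate-wise repackaging. The only thing worth being a little careful about is the minus-versus-plus pattern when taking Hermitian versus plain transposes in the two matrix definitions, which is where the sign difference between $\boldsymbol{\Sigma}_\textbf{z}$ and $\textbf{P}_\textbf{z}$ (in both the real and imaginary parts) originates. Once that bookkeeping is done, the corollary follows immediately from Lemma \ref{lma:rcconvergence} and the definition of the complex Gaussian distribution.
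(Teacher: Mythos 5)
Your proposal is correct and follows the same route as the paper, whose proof simply applies Lemma \ref{lma:rcconvergence} and computes the mean, covariance, and relation matrix of the limit; your sign bookkeeping for $\boldsymbol{\Sigma}_\textbf{z}$ and $\textbf{P}_\textbf{z}$ checks out. The only difference is that you spell out the expectation calculations the paper leaves implicit.
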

Using the notation of Corollary \ref{cor:eqvrealcomplex}, we can straightforwardly demonstrate why we need two parameter matrices $\boldsymbol{\Sigma}_\textbf{z}$, $\textbf{P}_\textbf{z}$ and a single  parameter vector $\boldsymbol{\mu}_\textbf{z}$ in order to define a complex-valued Gaussian distribution. The vector $\boldsymbol{\mu}_\textbf{v}$ contains $2d$ free parameters of location and the block covariance matrix $\boldsymbol{\Sigma}_\textbf{v}$ contains $2d^2 + d$ free parameters of variances and covariances. We require that the information, regarding the location and the covariance structure, of the random vectors $\textbf{v}$ and $\textbf{z}$ is equal. Note that neither the covariance matrix $\boldsymbol{\Sigma}_\textbf{z}$ nor the relation matrix $\textbf{P}_\textbf{z}$ contains enough information to uniquely construct the covariance matrix $\boldsymbol{\Sigma}_\textbf{v}$. For example, by considering the real- and imaginary parts of $\boldsymbol{\Sigma}_\textbf{z}$, we get two matrix equations, which is not enough to define $\boldsymbol{\Sigma}_\textbf{x}$, $\boldsymbol{\Sigma}_\textbf{y}$ and $\boldsymbol{\Sigma}_\textbf{xy}$ uniquely. By defining both $\boldsymbol{\Sigma}_\textbf{z}$ and $\textbf{P}_\textbf{z}$, we get four matrix equations, which is enough to fix the free parameters.

In our work, we utilize  Lemma \ref{lma:rcconvergence} and Corollary \ref{cor:eqvrealcomplex} in order to apply real-valued results for complex-valued scenarios, but we wish to emphasize that they also work the other way --- the complex-valued asymptotic distribution automatically grants the corresponding real-valued limiting distribution.
For additional details regarding complex-valued statistics, see e.g.,  \citet{eriksson2010, goodman1963, picinbono1996}.

\section{Temporally uncorrelated components model}
\label{sec:bssmodel}
In this section, we consider a linear temporally uncorrelated components model for discrete time complex-valued processes. We formulate the so-called unmixing problem and give functionals that solve the corresponding problem. The temporally uncorrelated components mixing model is defined as follows.
\begin{dfn}
\label{model:BSS}
The $\mathbb{C}^d$-process $\textbf{x}_\textnormal{\tiny{\textbullet}} \coloneqq \left(\textbf{x}_t\right)_{t\in \mathbb{N}} $ follows the temporally uncorrelated components mixing model, if
\begin{align*}
\textbf{x}_t = \textbf{A} \textbf{z}_t + \boldsymbol{\mu}_{\textbf{x}}, \quad \forall t\in \mathbb{N} = \left\{1,2,\ldots \right\},
\end{align*}
where $\textbf{A} $ is a constant nonsingular $\mathbb{C}^{d\times d}$-matrix, $\boldsymbol{\mu}_{\textbf{x}} \in \mathbb{C}^d$ is a constant location parameter and $\textbf{z}_\textnormal{\tiny{\textbullet}}$  is a non-degenerate $\mathbb{C}^d$-process with components having continuous marginal distributions. In addition, the process $\textbf{z}_\textnormal{\tiny{\textbullet}}$  satisfies the following four conditions,
\begin{itemize}
\item[(1)] $\boldsymbol{\mu}_{\textbf{z}_t} = \textbf{0}, \quad \forall t \in \mathbb{N}$,
\item[(2)] $\textbf{S}_0\left[ \textbf{z}_t \right] = \textbf{I}_d \quad \forall t \in \mathbb{N}$,
\item[(3)] $\ddot{\textbf{S}}_\tau\left[ \textbf{z}_t \right] = \ddot{\boldsymbol{\Lambda}}_\tau \quad \forall t,\tau \in \mathbb{N}$,
\item[(4)] $\exists \tau \in \mathbb{N}$ : $\textbf{S}_\tau\left[ \textbf{z}_t \right] = \boldsymbol{\Lambda}_\tau = \textnormal{diag}\left( \lambda_\tau^{(1)},\ldots, \lambda_\tau^{(d)}\right),$
\end{itemize}
such that $+\infty >\lambda_\tau^{(1)} > \lambda_\tau^{(2)} > \cdots > \lambda_\tau^{(d)} > -\infty$.
\end{dfn}
To improve the fluency of the paper, from hereon, the term mixing model is used to refer to the temporally uncorrelated components mixing model. The conditions (1)-(4) of Definition \ref{model:BSS} imply that the latent process $\textbf{z}_\textnormal{\tiny{\textbullet}}$ is weakly stationary in the sense of Definition \ref{def:weakstationarity}. Hereby, in the population level, it suffices to consider the covariances and autocovariances for some generic $t \in \mathbb{N}$. Note that, under these model assumptions, the concatenated $\mathbb{R}^{2d}$-process $\begin{pmatrix} \textnormal{Re}(\textbf{z}_\textnormal{\tiny{\textbullet}}^\top) & \textnormal{Im}(\textbf{z}_\textnormal{\tiny{\textbullet}}^\top)  \end{pmatrix}^\top$ is not necessarily weakly stationary in the classical real-valued sense. One could also require that the concatenated vector is stationary, which can be done by adding a condition involving the complex-valued relation matrix, defined in Section \ref{sec:ctimeseries}, to the model. However, adding the extra condition would also fix the relationship, up to heterogeneous sign-changes, between the real- and imaginary parts inside a single component. In this paper, we do not make wish to make assumptions regarding the complex phase of a single component. Consequently, many of the following results involve a so called phase-shift matrix $\textbf{J}$, that is, a complex-valued diagonal matrix with diagonal entries of the form $\exp(i\theta_1), \ldots, \exp(i\theta_d)$. Note that a phase-shift matrix  $\textbf{J}$ is by definition unitary, i.e., $\textbf{J}\textbf{J}^\textnormal{H} =\textbf{J}^\textnormal{H} \textbf{J}  = \textbf{I}_d$.

In our approach, the existence of a single diagonal $\boldsymbol{\Lambda}_\tau$ suffices, thus making the model more general when compared to real-valued approaches where $\boldsymbol{\Lambda}_\tau$ is required to be diagonal for all $\tau \in\mathbb{N}$, see e.g., \cite{miettinen2012,miettinen2016separation}.

For a process $\textbf{x}_\textnormal{\tiny{\textbullet}}$ that follows the mixing model, the corresponding unmixing problem is to uncover the latent process $\textbf{z}_\textnormal{\tiny{\textbullet}}$, by using only the information contained in the observable process $\textbf{x}_\textnormal{\tiny{\textbullet}}$. Next, we define affine functionals that we refer to as solutions to the unmixing problem.
\begin{dfn}
\label{def:bsssolutions}
Let $\textbf{x}_\textnormal{\tiny{\textbullet}} \coloneqq \left(\textbf{x}_t\right)_{t\in \mathbb{N}}$ be process that satisfies Definition \ref{model:BSS}, such that condition (4) holds for some fixed $\tau$. The affine functional $g : \textbf{a} \mapsto \boldsymbol{\Gamma}(\textbf{a} - \boldsymbol{\mu})  :  \mathbb{C}^{d} \rightarrow \mathbb{C}^{d}$ is a solution to the corresponding unmixing problem if the process $g \circ \textbf{x}_\textnormal{\tiny{\textbullet}}$ satisfies conditions (1) - (3) and condition (4) is satisfied with the fixed $\tau$.
\end{dfn}

Given a process $\textbf{x}_\textnormal{\tiny{\textbullet}}$ that follows the mixing model and a corresponding solution $g : \textbf{a} \mapsto \boldsymbol{\Gamma}(\textbf{a} - \boldsymbol{\mu})  $, we refer to $\boldsymbol{A}$, see Definition \ref{model:BSS}, as the mixing matrix and we refer to $\boldsymbol{\Gamma}$ as the unmixing matrix. The objective of the unmixing matrix is to reverse the effects of the mixing matrix. However, under our model assumptions, we cannot uniquely determine the relationship between the mixing and the unmixing matrix. The relationship between the two matrices is considered in the following lemma.
\begin{lma}
\label{lma:permphase}
Let $\textbf{x}_\textnormal{\tiny{\textbullet}} \coloneqq \left(\textbf{x}_t\right)_{t\in \mathbb{N}}$ be a process that satisfies Definition \ref{model:BSS}. The functional $g : \textbf{a} \mapsto \boldsymbol{\Gamma}(\textbf{a} - \boldsymbol{\mu})  :  \mathbb{C}^{d} \rightarrow \mathbb{C}^{d}$ is a solution to the corresponding unmixing problem if and only if,
\begin{align*}
\boldsymbol{\mu} = \boldsymbol{\mu}_{\textbf{x}}  \quad \textnormal{ and } \quad \boldsymbol{\Gamma}\textbf{A} = \textbf{J},
\end{align*}
where $\textbf{J}\in \mathbb{C}^{d\times d}$ is a phase-shift matrix. 
\end{lma}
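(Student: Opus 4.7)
\medskip

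\noindent\textbf{Proof plan.} Write $\textbf{y}_t := g(\textbf{x}_t) = \boldsymbol{\Gamma}\textbf{A}\textbf{z}_t + \boldsymbol{\Gamma}(\boldsymbol{\mu}_{\textbf{x}} - \boldsymbol{\mu})$ and set $\textbf{B} := \boldsymbol{\Gamma}\textbf{A}$. The plan is to translate each of the four defining conditions on $\textbf{y}_\textnormal{\tiny{\textbullet}}$ into an algebraic constraint on $\textbf{B}$ and $\boldsymbol{\mu}$, and then to combine these constraints. The ``if'' direction is a direct substitution, so the whole argument is really about the ``only if'' direction.

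\medskip

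\noindent\textbf{Necessity.} First I would observe that condition (2) applied to $\textbf{y}_\textnormal{\tiny{\textbullet}}$ gives
\begin{align*}
\textbf{I}_d = \textbf{S}_0[\textbf{y}_t] = \textbf{B}\,\textbf{S}_0[\textbf{z}_t]\,\textbf{B}^\textnormal{H} = \textbf{B}\textbf{B}^\textnormal{H},
\end{align*}
so $\textbf{B}$ is unitary and hence nonsingular; since $\textbf{A}$ is nonsingular by Definition \ref{model:BSS}, so is $\boldsymbol{\Gamma}$. Next, condition (1) yields $\mathbb{E}[\textbf{y}_t] = \boldsymbol{\Gamma}(\boldsymbol{\mu}_{\textbf{x}} - \boldsymbol{\mu}) = \textbf{0}$, which forces $\boldsymbol{\mu} = \boldsymbol{\mu}_{\textbf{x}}$ by the just-established invertibility of $\boldsymbol{\Gamma}$. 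It remains to pin down the structure of $\textbf{B}$. Applying condition (4) at the fixed lag $\tau$ gives
\begin{align*}
\boldsymbol{\Lambda}_\tau = \textbf{S}_\tau[\textbf{y}_t] = \textbf{B}\,\textbf{S}_\tau[\textbf{z}_t]\,\textbf{B}^\textnormal{H} = \textbf{B}\,\boldsymbol{\Lambda}_\tau\,\textbf{B}^\textnormal{H},
\end{align*}
so, post-multiplying by $\textbf{B}$, the matrix $\textbf{B}$ commutes with $\boldsymbol{\Lambda}_\tau$.

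\medskip

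\noindent\textbf{The main algebraic step.} The crux is the following standard linear-algebra fact, which I would state and use explicitly: a matrix that commutes with a diagonal matrix whose diagonal entries are pairwise distinct must itself be diagonal. Since Definition \ref{model:BSS} guarantees $\lambda_\tau^{(1)} > \cdots > \lambda_\tau^{(d)}$, this forces $\textbf{B} = \textnormal{diag}(b_1,\ldots,b_d)$ for some $b_k \in \mathbb{C}$. Combining with $\textbf{B}\textbf{B}^\textnormal{H} = \textbf{I}_d$, which for a diagonal matrix reduces to $|b_k|^2 = 1$, one obtains $b_k = e^{i\theta_k}$ for some $\theta_k \in \mathbb{R}$, and hence $\textbf{B} = \boldsymbol{\Gamma}\textbf{A} = \textbf{J}$ is a phase-shift matrix. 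I expect this commutation-plus-unitarity step to be the only nontrivial point; everything else is bookkeeping.

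\medskip

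\noindent\textbf{Sufficiency.} Conversely, assume $\boldsymbol{\mu} = \boldsymbol{\mu}_{\textbf{x}}$ and $\boldsymbol{\Gamma}\textbf{A} = \textbf{J}$, so that $\textbf{y}_t = \textbf{J}\textbf{z}_t$. Then $\mathbb{E}[\textbf{y}_t] = \textbf{J}\cdot\textbf{0} = \textbf{0}$ (condition~(1)); $\textbf{S}_0[\textbf{y}_t] = \textbf{J}\textbf{I}_d\textbf{J}^\textnormal{H} = \textbf{I}_d$ (condition~(2)); $\ddot{\textbf{S}}_\tau[\textbf{y}_t] = \textbf{J}\ddot{\boldsymbol{\Lambda}}_\tau\textbf{J}^\textnormal{H}$ is independent of $t$ (condition~(3)); and, because $\textbf{J}$ and the real diagonal $\boldsymbol{\Lambda}_\tau$ commute as diagonal matrices, $\textbf{S}_\tau[\textbf{y}_t] = \textbf{J}\boldsymbol{\Lambda}_\tau\textbf{J}^\textnormal{H} = \boldsymbol{\Lambda}_\tau\textbf{J}\textbf{J}^\textnormal{H} = \boldsymbol{\Lambda}_\tau$ at the fixed lag $\tau$ (condition~(4)). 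Thus $g$ is a solution in the sense of Definition~\ref{def:bsssolutions}, completing the proof.
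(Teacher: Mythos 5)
Your proposal is correct and follows essentially the same route as the paper's proof: condition (2) gives unitarity of $\boldsymbol{\Gamma}\textbf{A}$, condition (4) gives commutation with the diagonal matrix $\boldsymbol{\Lambda}_\tau$ whose distinct entries force diagonality, and sufficiency is direct verification. Your only (minor, and arguably cleaner) deviation is establishing nonsingularity of $\boldsymbol{\Gamma}$ from condition (2) before invoking condition (1) to conclude $\boldsymbol{\mu}=\boldsymbol{\mu}_{\textbf{x}}$, a point the paper leaves implicit.
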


The real-valued version of Lemma \ref{lma:permphase} is achieved by considering  phase-shift matrices with diagonal elements $\exp(i\theta_1), \ldots, \exp(i\theta_d)$, where $\theta_j$ are multiples of $\pi$. Consequently, in the real-valued case, the matrix $\textbf{J}$ is a so-called sign-change matrix. Note that all possible phase-shift matrices can be constructed by considering phases on some $2\pi$-length interval, e.g., the usual $[0,2\pi)$.

By Lemma \ref{lma:permphase}, we cannot distinguish between solutions that contain unmixing matrices that are a phase-shift away from each other. Thus, we say that two solutions $g_1 : \textbf{a} \mapsto \boldsymbol{\Gamma}_1(\textbf{a} - \boldsymbol{\mu}_1) $ and $g_2 : \textbf{a} \mapsto \boldsymbol{\Gamma}_2(\textbf{a} - \boldsymbol{\mu}_2) $ are equivalent, if there exists a phase-shift matrix $\textbf{J}$ such that $\boldsymbol{\Gamma}_1 = \boldsymbol{\Gamma}_2 \textbf{J}$. On the contrary, if $g_1$ and $g_2$ are both solutions to the same unmixing problem, it follows that $\boldsymbol{\mu}_1 = \boldsymbol{\mu}_2$.

We next provide a solution procedure for the unmixing problem. Recall that in the mixing model, we assume that the mixing matrix $\textbf{A}$ and the latent process $\textbf{z}_\textnormal{\tiny{\textbullet}}$ are unobservable. Therefore, the solution procedure relies solely on statistics of the observable process $\textbf{x}_\textnormal{\tiny{\textbullet}}$.

\begin{thm}
\label{bss:solutions}
Let $\textbf{x}_\textnormal{\tiny{\textbullet}} \coloneqq \left(\textbf{x}_t\right)_{t\in \mathbb{N}}$ be a process that satisfies Definition \ref{model:BSS}. The functional $g : \textbf{a} \mapsto \boldsymbol{\Gamma}(\textbf{a} - \boldsymbol{\mu})  :  \mathbb{C}^{d} \rightarrow \mathbb{C}^{d}$ is a solution to the corresponding unmixing problem if and only if the following eigenvector-equation,
\begin{align}
&\left( \textbf{S}_0\left[ \textbf{x}_t \right] \right)^{-1}  \textbf{S}_\tau \left[ \textbf{x}_t \right]\boldsymbol{\Gamma}^\textnormal{H}  = \boldsymbol{\Gamma}^\textnormal{H} \boldsymbol{\Lambda}_\tau, \quad \forall t \in \mathbb{N}, \label{eq:sol1}
\end{align}
and the following scaling-equation,
\begin{align}
&\boldsymbol{\Gamma} \textbf{S}_0\left[\textbf{x}_t\right] \boldsymbol{\Gamma}^\textnormal{H} = \textbf{I}_d,\quad \forall t \in \mathbb{N}
\label{eq:sol2}
\end{align}
are satisfied.
\end{thm}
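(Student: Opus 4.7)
The plan is to reduce the two matrix equations to the condition $\boldsymbol{\Gamma}\textbf{A} = \textbf{J}$ of Lemma \ref{lma:permphase}, so that the theorem becomes a direct corollary. Since neither (\ref{eq:sol1}) nor (\ref{eq:sol2}) constrains $\boldsymbol{\mu}$, the location part $\boldsymbol{\mu} = \boldsymbol{\mu}_{\textbf{x}}$ is handled separately: it is forced by condition (1) of Definition \ref{model:BSS} applied to $g \circ \textbf{x}_\textnormal{\tiny{\textbullet}}$ because $\boldsymbol{\Gamma}$ must be nonsingular (which (\ref{eq:sol2}) ensures).

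First, I would rewrite the two population matrices of $\textbf{x}_t$ in terms of $\textbf{A}$ and the latent process. Using $\textbf{x}_t = \textbf{A}\textbf{z}_t + \boldsymbol{\mu}_{\textbf{x}}$ together with conditions (2) and (4) of Definition \ref{model:BSS}, a direct computation gives
\begin{align*}
\textbf{S}_0[\textbf{x}_t] = \textbf{A}\textbf{A}^\textnormal{H}, \qquad \textbf{S}_\tau[\textbf{x}_t] = \textbf{A}\boldsymbol{\Lambda}_\tau \textbf{A}^\textnormal{H}.
\end{align*}
Setting $\textbf{U} \coloneqq \boldsymbol{\Gamma}\textbf{A}$, equation (\ref{eq:sol2}) becomes $\textbf{U}\textbf{U}^\textnormal{H} = \textbf{I}_d$, i.e., $\textbf{U}$ is unitary, while (\ref{eq:sol1}) becomes, after left-multiplying by $\textbf{A}^\textnormal{H}$ and using $\boldsymbol{\Gamma}^\textnormal{H} = \textbf{A}^{-\textnormal{H}}\textbf{U}^\textnormal{H}$,
\begin{align*}
\boldsymbol{\Lambda}_\tau \textbf{U}^\textnormal{H} = \textbf{U}^\textnormal{H} \boldsymbol{\Lambda}_\tau.
\end{align*}
That is, $\textbf{U}^\textnormal{H}$ commutes with $\boldsymbol{\Lambda}_\tau$.

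Second, I would invoke the distinctness of the diagonal entries $\lambda_\tau^{(1)} > \cdots > \lambda_\tau^{(d)}$ from condition (4). A standard linear-algebra fact states that a matrix commuting with a diagonal matrix having pairwise distinct entries must itself be diagonal. Therefore $\textbf{U}^\textnormal{H}$, and hence $\textbf{U}$, is diagonal; combined with the unitarity obtained from (\ref{eq:sol2}), each diagonal entry has unit modulus, so $\textbf{U}$ is a phase-shift matrix $\textbf{J}$. This yields $\boldsymbol{\Gamma}\textbf{A} = \textbf{J}$, and Lemma \ref{lma:permphase} closes the ``only if'' direction (together with $\boldsymbol{\mu} = \boldsymbol{\mu}_{\textbf{x}}$). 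For the converse, given a solution $g$, Lemma \ref{lma:permphase} provides $\boldsymbol{\Gamma}\textbf{A} = \textbf{J}$; substituting this back into $\textbf{A}\textbf{A}^\textnormal{H}$ and $\textbf{A}\boldsymbol{\Lambda}_\tau\textbf{A}^\textnormal{H}$ and using that $\textbf{J}$ is unitary and diagonal immediately verifies (\ref{eq:sol1}) and (\ref{eq:sol2}).

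The only subtle step is the commutation-to-diagonality argument, which is where condition (4) of the model does its real work; without the strict inequalities between the $\lambda_\tau^{(k)}$, the eigenspaces of $\boldsymbol{\Lambda}_\tau$ would not be one-dimensional and $\textbf{U}$ would only be forced to be block-diagonal, giving a much larger equivalence class of unmixing matrices than the phase-shift ambiguity. Everything else is routine algebraic manipulation of the expressions for $\textbf{S}_0[\textbf{x}_t]$ and $\textbf{S}_\tau[\textbf{x}_t]$.
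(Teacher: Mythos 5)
Your proof is correct and follows essentially the same route as the paper: both directions reduce to the characterization $\boldsymbol{\Gamma}\textbf{A}=\textbf{J}$ of Lemma \ref{lma:permphase}, with the key step being that a matrix commuting with $\boldsymbol{\Lambda}_\tau$ (distinct real diagonal entries) must be diagonal, and unitarity from \eqref{eq:sol2} then forces it to be a phase shift. The only blemish is cosmetic: you label the implication ``equations $\Rightarrow$ solution'' as the ``only if'' direction and vice versa, but both implications are argued correctly.
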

One can apply Theorem \ref{bss:solutions} in order to find solutions by  first estimating the eigenvalues and -vectors of $\left( \textbf{S}_0\left[ \textbf{x}_t \right] \right)^{-1}  \textbf{S}_\tau \left[ \textbf{x}_t \right]$ and then scaling the eigenvectors such that the scaling equation is satisfied. In addition, the eigenvectors should be ordered such that the corresponding eigenvalues, i.e., the diagonal elements of $\boldsymbol{\Lambda}_\tau$ are in decreasing order. Such solutions can be generated straightforwardly with the following corollary.
\begin{cor}
\label{cor:solution}
Let $\textbf{x}_\textnormal{\tiny{\textbullet}} \coloneqq \left(\textbf{x}_t\right)_{t\in \mathbb{N}}$ be a process that satisfies Definition \ref{model:BSS} and denote $\textbf{S}_j \coloneqq \textbf{S}_j[\textbf{x}_t]$. Let $\textbf{S}_0^{-1/2}$ be a conjugate symmetric matrix, such that $\textbf{S}_0^{-1/2}\textbf{S}_0\textbf{S}_0^{-1/2} = \textbf{I}_d$. Then, the eigendecomposition $\textbf{S}_0^{-1/2} \textbf{S}_\tau \textbf{S}_0^{-1/2} = \textbf{V} \boldsymbol{\Lambda}_\tau \textbf{V}^\textnormal{H}$, is satisfied for some unitary $\textbf{V}$ and the functional
\begin{align*}
 g : \textbf{a} \mapsto \textbf{V}^\textnormal{H}\textbf{S}_0^{-\frac{1}{2}} (\textbf{a} - \boldsymbol{\mu}_{\textbf{x}})  :  \mathbb{C}^{d} \rightarrow \mathbb{C}^{d},
\end{align*}
is a solution to the corresponding unmixing problem.
\end{cor}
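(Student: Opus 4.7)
My plan is to invoke Theorem \ref{bss:solutions} with $\boldsymbol{\mu} := \boldsymbol{\mu}_\textbf{x}$ and $\boldsymbol{\Gamma} := \textbf{V}^\textnormal{H}\textbf{S}_0^{-1/2}$, and verify the two equations (\ref{eq:sol1}) and (\ref{eq:sol2}). Before that I must justify the existence of the claimed eigendecomposition with $\textbf{V}$ unitary and with diagonal factor equal to the model's $\boldsymbol{\Lambda}_\tau$, including its prescribed ordering.

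For existence, observe that $\textbf{M} := \textbf{S}_0^{-1/2}\textbf{S}_\tau\textbf{S}_0^{-1/2}$ is conjugate symmetric, since both $\textbf{S}_\tau$ and $\textbf{S}_0^{-1/2}$ are. The complex spectral theorem then furnishes a unitary $\textbf{V}$ and a real diagonal $\textbf{D}$ with $\textbf{M} = \textbf{V}\textbf{D}\textbf{V}^\textnormal{H}$. To identify $\textbf{D}$ with $\boldsymbol{\Lambda}_\tau$ I use the mixing model: a direct computation from $\textbf{x}_t = \textbf{A}\textbf{z}_t + \boldsymbol{\mu}_\textbf{x}$ together with conditions (2) and (4) of Definition \ref{model:BSS} gives $\textbf{S}_0 = \textbf{A}\textbf{A}^\textnormal{H}$ and $\textbf{S}_\tau = \textbf{A}\boldsymbol{\Lambda}_\tau\textbf{A}^\textnormal{H}$. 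Hence $\textbf{S}_0^{-1}\textbf{S}_\tau = \textbf{A}^{-\textnormal{H}}\boldsymbol{\Lambda}_\tau\textbf{A}^\textnormal{H}$ is similar to $\boldsymbol{\Lambda}_\tau$, and since $\textbf{M}$ is in turn similar to $\textbf{S}_0^{-1}\textbf{S}_\tau$ (via conjugation by $\textbf{S}_0^{-1/2}$), its spectrum is exactly the distinct reals $\lambda_\tau^{(1)} > \cdots > \lambda_\tau^{(d)}$. Permuting the columns of $\textbf{V}$ so that the eigenvalues appear in decreasing order gives $\textbf{D} = \boldsymbol{\Lambda}_\tau$.

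The two verifications are then short. The scaling equation (\ref{eq:sol2}) reduces to $\boldsymbol{\Gamma}\textbf{S}_0\boldsymbol{\Gamma}^\textnormal{H} = \textbf{V}^\textnormal{H}\textbf{S}_0^{-1/2}\textbf{S}_0\textbf{S}_0^{-1/2}\textbf{V} = \textbf{I}_d$. For (\ref{eq:sol1}), first note that the defining identity $\textbf{S}_0^{-1/2}\textbf{S}_0\textbf{S}_0^{-1/2} = \textbf{I}_d$ forces $\textbf{S}_0^{-1/2}\textbf{S}_0^{-1/2} = \textbf{S}_0^{-1}$ upon taking inverses. Multiplying the eigen-relation $\textbf{M}\textbf{V} = \textbf{V}\boldsymbol{\Lambda}_\tau$ on the left by $\textbf{S}_0^{-1/2}$ then yields $\textbf{S}_0^{-1}\textbf{S}_\tau\textbf{S}_0^{-1/2}\textbf{V} = \textbf{S}_0^{-1/2}\textbf{V}\boldsymbol{\Lambda}_\tau$, which is precisely (\ref{eq:sol1}) since $\boldsymbol{\Gamma}^\textnormal{H} = \textbf{S}_0^{-1/2}\textbf{V}$ by conjugate symmetry of $\textbf{S}_0^{-1/2}$.

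The only genuine subtlety is the identification of the diagonal factor produced by the spectral theorem with the model's $\boldsymbol{\Lambda}_\tau$; the strict ordering imposed in condition (4) of Definition \ref{model:BSS} is what renders this unambiguous, and it simultaneously makes $\textbf{V}$ unique up to right multiplication by a phase-shift matrix, exactly matching the non-uniqueness predicted by Lemma \ref{lma:permphase}. Everything downstream is routine matrix algebra.
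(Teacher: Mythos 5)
Your proof is correct and follows essentially the same route as the paper: existence of the eigendecomposition from conjugate symmetry of $\textbf{S}_0^{-1/2}\textbf{S}_\tau\textbf{S}_0^{-1/2}$ (with $\textbf{S}_0$ positive definite), followed by verification of Equations \eqref{eq:sol1} and \eqref{eq:sol2} so that Theorem \ref{bss:solutions} applies. You additionally spell out the identification of the diagonal factor with the model's $\boldsymbol{\Lambda}_\tau$ via $\textbf{S}_0 = \textbf{A}\textbf{A}^\textnormal{H}$ and $\textbf{S}_\tau = \textbf{A}\boldsymbol{\Lambda}_\tau\textbf{A}^\textnormal{H}$, a detail the paper leaves implicit.
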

The solutions for the unmixing problem are affine invariant in the following sense.
\begin{lma}
\label{lma:affeqv}
Let $\textbf{x}_\textnormal{\tiny{\textbullet}} \coloneqq \left(\textbf{x}_t\right)_{t\in \mathbb{N}}$ be a process that satisfies Definition \ref{model:BSS} and let $g : a \mapsto \boldsymbol{\Gamma}(\textbf{a} - \boldsymbol{\mu})$ be a corresponding solution. Furthermore, let $\tilde{\textbf{x}_\textnormal{\tiny{\textbullet}}}\coloneqq \left(\textbf{C}\textbf{x}_t + \textbf{b}\right)_{t\in \mathbb{N}}$, where  $\textbf{C} \in \mathbb{C}^{d\times d}$ is nonsingular, $\textbf{b} \in \mathbb{C}^d$ and let  $\tilde{g} : a \mapsto \tilde{\boldsymbol{\Gamma}} ( \textbf{a} - \tilde{\boldsymbol{\mu}})$ be a solution for the process $\tilde{\textbf{x}_\textnormal{\tiny{\textbullet}}}$. Then,
\begin{align*}
\tilde{\boldsymbol{\Gamma}}  = \textbf{J}{\boldsymbol{\Gamma}}\textbf{C}^{-1},
\end{align*}
for some  phase-shift matrix $\textbf{J}$.
\end{lma}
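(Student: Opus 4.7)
The plan is to reduce the claim entirely to Lemma \ref{lma:permphase} via a short algebraic manipulation. First, I would verify that the transformed process $\tilde{\textbf{x}}_\textnormal{\tiny{\textbullet}}$ fits Definition \ref{model:BSS} with the \emph{same} latent $\textbf{z}_\textnormal{\tiny{\textbullet}}$: since $\tilde{\textbf{x}}_t = \textbf{C}\textbf{x}_t + \textbf{b} = (\textbf{C}\textbf{A})\textbf{z}_t + (\textbf{C}\boldsymbol{\mu}_\textbf{x} + \textbf{b})$, the matrix $\tilde{\textbf{A}} \coloneqq \textbf{C}\textbf{A}$ is nonsingular (product of nonsingular matrices) and the location is $\tilde{\boldsymbol{\mu}}_\textbf{x} \coloneqq \textbf{C}\boldsymbol{\mu}_\textbf{x} + \textbf{b}$; the conditions on $\textbf{z}_\textnormal{\tiny{\textbullet}}$ are unchanged. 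Hence $\tilde{\textbf{x}}_\textnormal{\tiny{\textbullet}}$ satisfies the mixing model with the same eigenvalue matrix $\boldsymbol{\Lambda}_\tau$ at the same lag $\tau$.

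Next, I would apply Lemma \ref{lma:permphase} to both solutions. From $g$ being a solution for $\textbf{x}_\textnormal{\tiny{\textbullet}}$, there exists a phase-shift matrix $\textbf{J}_1$ with
\begin{align*}
\boldsymbol{\mu} = \boldsymbol{\mu}_\textbf{x}, \qquad \boldsymbol{\Gamma}\textbf{A} = \textbf{J}_1,
\end{align*}
and from $\tilde{g}$ being a solution for $\tilde{\textbf{x}}_\textnormal{\tiny{\textbullet}}$, there exists a phase-shift matrix $\textbf{J}_2$ with
\begin{align*}
\tilde{\boldsymbol{\mu}} = \textbf{C}\boldsymbol{\mu}_\textbf{x} + \textbf{b}, \qquad \tilde{\boldsymbol{\Gamma}}\,\textbf{C}\textbf{A} = \textbf{J}_2.
\end{align*}
Since $\textbf{J}_1$ is unitary, $\textbf{A}^{-1} = \textbf{J}_1^\textnormal{H}\boldsymbol{\Gamma}$, and right-multiplying the second identity by $(\textbf{C}\textbf{A})^{-1} = \textbf{A}^{-1}\textbf{C}^{-1}$ yields
\begin{align*}
\tilde{\boldsymbol{\Gamma}} = \textbf{J}_2\,\textbf{A}^{-1}\textbf{C}^{-1} = \textbf{J}_2 \textbf{J}_1^\textnormal{H}\, \boldsymbol{\Gamma}\,\textbf{C}^{-1}.
\end{align*}

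Finally, I would note that $\textbf{J} \coloneqq \textbf{J}_2 \textbf{J}_1^\textnormal{H}$ is again a phase-shift matrix, because the class of diagonal matrices with unit-modulus diagonal entries of the form $\exp(i\theta_k)$ is closed under conjugate transposition and multiplication; this gives $\tilde{\boldsymbol{\Gamma}} = \textbf{J}\boldsymbol{\Gamma}\textbf{C}^{-1}$ as required. There is no serious obstacle here: the entire argument is a direct bookkeeping exercise on top of Lemma \ref{lma:permphase}. The only non-cosmetic points to check are that $\tilde{\textbf{x}}_\textnormal{\tiny{\textbullet}}$ genuinely belongs to the model class (handled by the linearity and nonsingularity of $\textbf{C}$) and that phase-shift matrices form a group under multiplication, both of which are essentially immediate.
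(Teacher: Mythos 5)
Your proposal is correct and follows essentially the same route as the paper: apply Lemma \ref{lma:permphase} to both solutions to get $\boldsymbol{\Gamma}\textbf{A} = \textbf{J}_1$ and $\tilde{\boldsymbol{\Gamma}}\textbf{C}\textbf{A} = \textbf{J}_2$, solve for $\tilde{\boldsymbol{\Gamma}} = \textbf{J}_2\textbf{J}_1^\textnormal{H}\boldsymbol{\Gamma}\textbf{C}^{-1}$, and use closure of phase-shift matrices under multiplication. Your added verification that $\tilde{\textbf{x}}_\textnormal{\tiny{\textbullet}}$ satisfies the mixing model with mixing matrix $\textbf{C}\textbf{A}$ is a reasonable extra bit of care that the paper leaves implicit.
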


\section{Estimation and asymptotic properties}
\label{sec:asympBSS}

The algorithm for multiple unknown signals extraction (AMUSE), \cite{tong1990amuse}, is a  widely applied blind source separation unmixing procedure. The AMUSE procedure can be applied to solve the unmixing problem presented in this paper. AMUSE and the corresponding asymptotic properties have been previously studied in the real-valued case, see e.g, \cite{miettinen2012}. Here, we adopt the estimation part of the AMUSE algorithm. However, our underlying model assumptions are not identical to the ones in  \cite{miettinen2012} and \cite{tong1990amuse}. Thus, in order to avoid misinterpretations, we refrain from using the term AMUSE and we simply use the term unmixing procedure.
In this section, we consider the consistency and the limiting distribution of the corresponding unmixing estimator under complex-valued long-range and short-range dependent processes, significantly extending the results given in \cite{miettinen2012}. 

Analysis of the temporally uncorrelated components relies on estimating the mean vector $\boldsymbol{\mu}$ and the unmixing matrix $\boldsymbol{\Gamma}$. Our main focus is on the asymptotic theory of the unmixing matrix estimator, since under the mixture model of this paper, the estimation of the location parameter is  straightforward.  The following definition presents finite sample solutions for the unmixing problem.
\begin{dfn}
\label{def:estimating}
Let $\textbf{x}_\textnormal{\tiny{\textbullet}} \coloneqq \left(\textbf{x}_t\right)_{t\in \mathbb{N}}$ be a process that satisfies Definition \ref{model:BSS} and let $\textbf{X} $ be a $ \mathbb{C}^{T \times d}$-valued, $1\leq d < T < \infty$, sampled stochastic process generated by $\textbf{x}_\textnormal{\tiny{\textbullet}}$. Let $ \hat{\boldsymbol{\mu}} \coloneqq \hat{\boldsymbol{\mu}}[\textbf{X}] $ be the sample mean and let $\textbf{1}_T$ be a $\mathbb{R}^T$-vector full of ones.  Then, the mapping $\hat{g}: \textbf{C} \mapsto (\textbf{C} - \textbf{1}_T\hat{\boldsymbol{\mu}}^\top ) \hat{\boldsymbol{\Gamma}}^\top : \mathbb{C}^{T \times d} \rightarrow \mathbb{C}^{T \times d}$ is a solution to the finite sample unmixing problem, if
\begin{align*}
\hat{\boldsymbol{\Gamma} }  \hat{\textbf{S}}_0[\textbf{X} ] \hat{\boldsymbol{\Gamma}}^\textnormal{H}  =\textbf{I}_d \qquad \textnormal{ and } \qquad \hat{\boldsymbol{\Gamma} }  \hat{\textbf{S}}_\tau [\textbf{X} ] \hat{\boldsymbol{\Gamma}}^\textnormal{H}  =\hat{\boldsymbol{\Lambda}}_\tau =   \textnormal{diag}\left( \hat{\lambda}_\tau^{(1)},\ldots, \hat{\lambda}_\tau^{(d)}\right),
\end{align*}
where $\hat{\lambda}_\tau^{(1)} \geq  \hat{\lambda}_\tau^{(2)} \geq \ldots \geq  \hat{\lambda}_\tau^{(d)}.$
\end{dfn}
Recall, that $\hat{\textbf{S}}_\tau$ is the symmetrized autocovariance matrix, and it is, by definition, conjugate symmetric. In addition, recall that the diagonal entries and the eigenvalues of a conjugate symmetric matrix are, again by definition, real-valued. Hereby, in Definition \ref{def:estimating}, the ordering of $ \hat{\lambda}_\tau^{(j)}$ is always well-defined.

Let $\textbf{X}_j$ denote the transpose of the $j$th row of $\textbf{X}$, that is, the value the sampled $\mathbb{C}^d$-process takes at time $j$. Given a finite sample solution $\hat{g}: \textbf{C} \mapsto (\textbf{C} - \textbf{1}_T\hat{\boldsymbol{\mu}}^\top ) \hat{\boldsymbol{\Gamma}}^\top$, we get an estimate for the latent process $\textbf{z}_\textnormal{\tiny{\textbullet}}$ from the mapping $\hat{g}(\textbf{X})$. Note that, under the mapping,
\begin{align*}
\hat{g}(\textbf{X}) =  \left(\textbf{X} - \textbf{1}_T\hat{\boldsymbol{\mu}}^\top\right) \hat{\boldsymbol{\Gamma}}^\top,
\end{align*}
we get that  $\textbf{X}_j$ is mapped as follows,
\begin{align}
\label{eq:rowgamma}
\hat{\boldsymbol{\Gamma}}\left(\textbf{X}_j - \hat{\boldsymbol{\mu}} \right).
\end{align}
Hereby, Equation \eqref{eq:rowgamma} corresponds to the population version given in Definition \ref{def:bsssolutions}.

As in the population case, we can solve the finite sample unmixing problem by utilizing the following lemma.
\begin{lma}
\label{lma:samplesol}
Let $\textbf{x}_\textnormal{\tiny{\textbullet}} \coloneqq \left(\textbf{x}_t\right)_{t\in \mathbb{N}}$ be a process that satisfies Definition \ref{model:BSS} and let $\textbf{X}$ be a $ \mathbb{C}^{T \times d}$-valued, $1\leq d < T < \infty$, sampled stochastic process generated by $\textbf{x}_\textnormal{\tiny{\textbullet}}$. Denote $ \hat{\boldsymbol{\mu}} \coloneqq \hat{\boldsymbol{\mu}}[\textbf{X}]$ and $\hat{\textbf{S}_j}\coloneqq \hat{\textbf{S}_j}[\textbf{X}]$, $j \in \{0,\tau\}$. Let $\hat{\boldsymbol{\Sigma}}_0$ be a conjugate symmetric matrix that satisfies $\hat{\boldsymbol{\Sigma}}_0\hat{\textbf{S}_0}\hat{\boldsymbol{\Sigma}}_0 = \textbf{I}_d$ and let  $\hat{\boldsymbol{\Sigma}}_0 \hat{\textbf{S}_\tau} \hat{\boldsymbol{\Sigma}}_0= \xhat{\textbf{V}} \hat{\boldsymbol{\Lambda}}_\tau \xhat{\textbf{V}}^\textnormal{H}$, where $\xhat{\textbf{V}}$ is unitary. Such matrices $\hat{\boldsymbol{\Sigma}}_0$, $\xhat{\textbf{V}}$ exists, and the mapping
\begin{align*}
\hat{g}: \textbf{C} \mapsto (\textbf{C} - \textbf{1}_T\hat{\boldsymbol{\mu}}^\top ) \left(\xhat{\textbf{V}}^\textnormal{H}\hat{\boldsymbol{\Sigma}}_0  \right)^\top : \mathbb{C}^{T \times d} \rightarrow \mathbb{C}^{T \times d},
\end{align*}
is a solution to the finite sample unmixing problem.
\end{lma}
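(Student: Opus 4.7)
The plan is to mirror the population-level construction from Corollary \ref{cor:solution} at the sample level, the only genuine issue being the almost-sure invertibility of $\hat{\textbf{S}}_0$ and the bookkeeping needed to order the eigenvalues.

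First I would establish the existence of the matrix $\hat{\boldsymbol{\Sigma}}_0$. By construction, $\hat{\textbf{S}}_0$ is conjugate symmetric, and since the mapping $\textbf{z}\mapsto \textbf{z}^\textnormal{H}\hat{\textbf{S}}_0\textbf{z}$ is a non-negative quadratic form built from $T-1$ centred sample vectors, $\hat{\textbf{S}}_0$ is positive semidefinite. Failure of strict positive definiteness would mean that the centred samples lie in a proper complex subspace of $\mathbb{C}^d$, an event that has probability zero under the continuous marginal assumption of Definition \ref{model:BSS} combined with non-singularity of $\textbf{A}$. Hence almost surely $\hat{\textbf{S}}_0$ admits a unitary diagonalization $\hat{\textbf{S}}_0 = \hat{\textbf{U}}\hat{\textbf{D}}\hat{\textbf{U}}^\textnormal{H}$ with strictly positive real diagonal $\hat{\textbf{D}}$, and I would simply set $\hat{\boldsymbol{\Sigma}}_0 \coloneqq \hat{\textbf{U}}\hat{\textbf{D}}^{-1/2}\hat{\textbf{U}}^\textnormal{H}$, which is conjugate symmetric and fulfils $\hat{\boldsymbol{\Sigma}}_0\hat{\textbf{S}}_0\hat{\boldsymbol{\Sigma}}_0=\textbf{I}_d$.

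Next I would produce $\hat{\textbf{V}}$. The product $\hat{\boldsymbol{\Sigma}}_0\hat{\textbf{S}}_\tau\hat{\boldsymbol{\Sigma}}_0$ is conjugate symmetric because both $\hat{\boldsymbol{\Sigma}}_0$ and $\hat{\textbf{S}}_\tau$ are, so the spectral theorem for Hermitian matrices yields a unitary $\hat{\textbf{V}}$ and a real diagonal $\hat{\boldsymbol{\Lambda}}_\tau$ with $\hat{\boldsymbol{\Sigma}}_0\hat{\textbf{S}}_\tau\hat{\boldsymbol{\Sigma}}_0 = \hat{\textbf{V}}\hat{\boldsymbol{\Lambda}}_\tau\hat{\textbf{V}}^\textnormal{H}$. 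By permuting the columns of $\hat{\textbf{V}}$ (and the corresponding diagonal entries of $\hat{\boldsymbol{\Lambda}}_\tau$) I can arrange the eigenvalues in weakly decreasing order as required by Definition \ref{def:estimating}; the resulting $\hat{\textbf{V}}$ remains unitary.

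Finally I would verify the two defining identities for $\hat{\boldsymbol{\Gamma}}\coloneqq \hat{\textbf{V}}^\textnormal{H}\hat{\boldsymbol{\Sigma}}_0$. Using unitarity of $\hat{\textbf{V}}$ and conjugate symmetry of $\hat{\boldsymbol{\Sigma}}_0$,
\begin{align*}
\hat{\boldsymbol{\Gamma}}\hat{\textbf{S}}_0\hat{\boldsymbol{\Gamma}}^\textnormal{H} = \hat{\textbf{V}}^\textnormal{H}\hat{\boldsymbol{\Sigma}}_0\hat{\textbf{S}}_0\hat{\boldsymbol{\Sigma}}_0\hat{\textbf{V}} = \hat{\textbf{V}}^\textnormal{H}\textbf{I}_d\hat{\textbf{V}} = \textbf{I}_d,
\end{align*}
and similarly $\hat{\boldsymbol{\Gamma}}\hat{\textbf{S}}_\tau\hat{\boldsymbol{\Gamma}}^\textnormal{H} = \hat{\textbf{V}}^\textnormal{H}\hat{\textbf{V}}\hat{\boldsymbol{\Lambda}}_\tau\hat{\textbf{V}}^\textnormal{H}\hat{\textbf{V}} = \hat{\boldsymbol{\Lambda}}_\tau$. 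This is exactly Definition \ref{def:estimating}, so $\hat{g}$ is a finite-sample solution.

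The only delicate step is the almost-sure positive definiteness of $\hat{\textbf{S}}_0$ that underpins the existence of $\hat{\boldsymbol{\Sigma}}_0$; everything else reduces to applying the Hermitian spectral theorem and standard matrix algebra. I would expect this is handled by noting that continuous marginals together with non-singularity of $\textbf{A}$ ensure the event $\{\det(\hat{\textbf{S}}_0)=0\}$ is a null set, so the construction above is well defined with probability one, which is the sense in which the lemma is to be understood.
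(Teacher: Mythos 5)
Your proposal is correct and follows essentially the same route as the paper's proof: almost-sure nonsingularity of $\hat{\textbf{S}}_0$ from the continuous-marginals assumption, existence of the conjugate symmetric inverse square root and of the unitary eigenvector matrix, and then direct verification of the two identities of Definition \ref{def:estimating}. You merely spell out in more detail the positive-definiteness and spectral-theorem steps that the paper states briefly.
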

Using Lemma \ref{lma:samplesol}, we can straightforwardly implement the unmixing procedure. The first step is to calculate $\hat{\textbf{S}}_0 \coloneqq \hat{\textbf{S}}_0[\textbf{X}]$, from a realization $\textbf{X}$, and the corresponding conjugate symmetric inverse square root $\hat{\boldsymbol{\Sigma}}_0 \coloneqq \hat{\textbf{S}}_0^{-1/2}$. Recall that, by assumption, the components of $\textbf{z}_\textnormal{\tiny{\textbullet}} $ have continuous marginal distributions and the mixing matrix is nonsingular. Thus, the eigenvalues of covariance matrix estimates are always real-valued and almost surely positive. Hereby, the matrix $\hat{\boldsymbol{\Sigma}}_0$ can be obtained conveniently by estimating the eigendecomposition of $\hat{\textbf{S}}_0$. The next step is to choose a lag-parameter $\tau$ and estimate the eigendecomposition $\hat{\textbf{S}}_\tau[ \textbf{X}\hat{\boldsymbol{\Sigma}}_0^\top ] =\hat{\boldsymbol{\Sigma}}_0\hat{\textbf{S}}_\tau[ \textbf{X}] \hat{\boldsymbol{\Sigma}}_0=  \hat{\textbf{V}} \hat{\boldsymbol{\Lambda}}_\tau \hat{\textbf{V}}^\textnormal{H} $. The covariance matrix and autocovariance matrix estimators are affine equivariant in the sense that $\hat{\textbf{S}}_j[\textbf{X} \textbf{C}^\top] =\textbf{C}\hat{\textbf{S}}_j[\textbf{X}]\textbf{C}^\textnormal{H}$, $j \in\{0,\tau\}$, for all nonsingular $\mathbb{C}^{d\times d}$-matrices $\textbf{C}$. An estimate for the latent process can then be found via the mapping $(\textbf{X} - \textbf{1}_T\hat{\boldsymbol{\mu}}^\top ) (\hat{\textbf{V}}^\textnormal{H}\hat{\boldsymbol{\Sigma}}_0  )^\top,$ where $\hat{\boldsymbol{\mu}}$ is the sample mean vector of $\textbf{X}$.

In practice, one should choose the lag parameter $\tau \neq 0$ such that the diagonal elements of $\hat{\boldsymbol{\Lambda}}_\tau$ are as distinct as possible. Strategies for choosing $\tau$ are discussed, for example, in \cite{cichocki2002adaptive}. From a computational perspective, the estimation procedure is relatively fast. Hereby, in practice, one should try several different values of $\tau$ and study the diagonal elements of $\hat{\boldsymbol{\Lambda}}_\tau$. It is usually beneficial to start with lag parameters close to $\tau=1$, as in several applications the autocovariances tend to diminish as $\tau$ grows. In addition, values for $\tau$ that are close to $T$ should be avoided, since one might end up estimating the autocovariance matrix using only a small number of observations.

We emphasize that one can apply the theory of this paper to other estimators as well. That is, under minor model assumptions, the estimators $\hat{\textbf{S}}_0$,  $\hat{\textbf{S}}_\tau$ can be replaced with any matrix valued estimators that have the so-called complex-valued affine equivariance property, see \cite{ilmonen2013}.

The conditions given in Definition \ref{def:estimating} remain true, if we replace $\hat{\boldsymbol{\Gamma}}$ with $\textbf{J}\hat{\boldsymbol{\Gamma}}$, where $\textbf{J} \in \mathbb{C}^{d\times d}$ can be any phase-shift matrix. In other words, if $\hat{\boldsymbol{\Gamma}}$ is an unmixing matrix estimate, then $\textbf{J}\hat{\boldsymbol{\Gamma}}$ is an equally viable estimate. Thus, as in the population level, we say that the estimates $\hat{\boldsymbol{\Gamma}}_1$ and $\hat{\boldsymbol{\Gamma}}_2$ are equivalent if $\hat{\boldsymbol{\Gamma}}_1 = \textbf{J}\hat{\boldsymbol{\Gamma}}_2$ for some phase-shift matrix $\textbf{J}$. Furthermore, similarly as in Lemma \ref{lma:affeqv}, we have that the unmixing matrix estimates are affine invariant in the following sense.
\begin{lma}
\label{lma:affinvsample}
Let $\textbf{x}_\textnormal{\tiny{\textbullet}} \coloneqq \left(\textbf{x}_t\right)_{t\in \mathbb{N}}$ be a process that satisfies Definition \ref{model:BSS} and let $\textbf{X} $ be a $ \mathbb{C}^{T \times d}$-valued, $1\leq d < T < \infty$, sampled stochastic process generated by $\textbf{x}_\textnormal{\tiny{\textbullet}}$ and let $\hat{g} : \textbf{C} \mapsto (\textbf{C} - \textbf{1}_T \hat{\boldsymbol{\mu}}^\top) \hat{\boldsymbol{\Gamma}}^\top$ be a corresponding finite sample solution defined in Lemma \ref{lma:samplesol}. Let $\textbf{B} \in \mathbb{C}^{d\times d}$ be nonsingular, let $\textbf{b} \in \mathbb{C}^d$, let $\xtilde{\textbf{X}} = \textbf{X}\textbf{B}^\top + \textbf{1}_T\textbf{b}^\top $ and let $\tilde{g} : \textbf{C} \mapsto (\textbf{C} - \textbf{1}_T \tilde{\boldsymbol{\mu}}^\top) \xtilde{\boldsymbol{\Gamma}}^\top$ be a corresponding finite sample solution for $\xtilde{\textbf{X}}$. Then, $\tilde{\boldsymbol{\Gamma}} = {\textbf{J}} {\hat{\boldsymbol{\Gamma}}}\textbf{B}^{-1}$ is a finite sample solution for $\xtilde{\textbf{X}}$, where $\textbf{J}$ is some phase-shift matrix.
\end{lma}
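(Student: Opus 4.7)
The plan is to directly verify that the candidate matrix $\textbf{J}\hat{\boldsymbol{\Gamma}}\textbf{B}^{-1}$ satisfies the two defining conditions of Definition \ref{def:estimating} applied to $\xtilde{\textbf{X}}$, for any phase-shift matrix $\textbf{J}$. The argument rests on two elementary ingredients already established in the excerpt: the affine equivariance of the sample mean and of the symmetrized autocovariance estimators, giving $\hat{\boldsymbol{\mu}}[\xtilde{\textbf{X}}] = \textbf{B}\hat{\boldsymbol{\mu}}[\textbf{X}] + \textbf{b}$ and $\hat{\textbf{S}}_j[\xtilde{\textbf{X}}] = \textbf{B}\hat{\textbf{S}}_j[\textbf{X}]\textbf{B}^\textnormal{H}$ for $j \in \{0,\tau\}$; together with the fact that a phase-shift matrix is unitary and diagonal, so $\textbf{J}\textbf{D}\textbf{J}^\textnormal{H} = \textbf{D}$ for every diagonal $\textbf{D}$.

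First I would plug the candidate $\xtilde{\boldsymbol{\Gamma}} = \textbf{J}\hat{\boldsymbol{\Gamma}}\textbf{B}^{-1}$ into the scaling condition. The affine equivariance of $\hat{\textbf{S}}_0$ causes the $\textbf{B}$-factors to cancel via $\textbf{B}^{-1}\textbf{B} = \textbf{I}_d$ and $(\textbf{B}^{-1})^\textnormal{H} = \textbf{B}^{-\textnormal{H}}$, reducing the expression to
\begin{align*}
\xtilde{\boldsymbol{\Gamma}}\hat{\textbf{S}}_0[\xtilde{\textbf{X}}]\xtilde{\boldsymbol{\Gamma}}^\textnormal{H}
= \textbf{J}\hat{\boldsymbol{\Gamma}}\hat{\textbf{S}}_0[\textbf{X}]\hat{\boldsymbol{\Gamma}}^\textnormal{H}\textbf{J}^\textnormal{H}
= \textbf{J}\textbf{I}_d\textbf{J}^\textnormal{H} = \textbf{I}_d,
\end{align*}
invoking the scaling property of $\hat{\boldsymbol{\Gamma}}$ for $\textbf{X}$ and then unitarity of $\textbf{J}$. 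The identical cancellation applied to $\hat{\textbf{S}}_\tau$ yields $\xtilde{\boldsymbol{\Gamma}}\hat{\textbf{S}}_\tau[\xtilde{\textbf{X}}]\xtilde{\boldsymbol{\Gamma}}^\textnormal{H} = \textbf{J}\hat{\boldsymbol{\Lambda}}_\tau\textbf{J}^\textnormal{H} = \hat{\boldsymbol{\Lambda}}_\tau$, since conjugation of a diagonal matrix by a phase shift leaves it unchanged; the diagonal-with-decreasing-real-entries ordering required by Definition \ref{def:estimating} is then inherited from $\hat{\boldsymbol{\Lambda}}_\tau$.

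The remaining step is to match the location parameter appearing in $\tilde{g}$: one notes that $\tilde{\boldsymbol{\mu}} = \hat{\boldsymbol{\mu}}[\xtilde{\textbf{X}}] = \textbf{B}\hat{\boldsymbol{\mu}}[\textbf{X}] + \textbf{b}$ is immediate from linearity of the sample mean, and this is precisely the shift that makes the affine equivariance of $\hat{\textbf{S}}_j$ valid (the centering in the autocovariance formula absorbs the translation $\textbf{b}$). There is no genuine obstacle here: the proof is an algebraic exercise in composing the affine transformation $\textbf{X} \mapsto \textbf{X}\textbf{B}^\top + \textbf{1}_T\textbf{b}^\top$ with the unmixing map and exploiting the invariance of diagonal matrices under conjugation by phase shifts. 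The only mild care required is to track the conjugate transposes so that the $\textbf{B}$-factors cancel cleanly; and because the lemma makes no uniqueness claim, exhibiting one phase-shift $\textbf{J}$ (for instance $\textbf{J} = \textbf{I}_d$) for which $\textbf{J}\hat{\boldsymbol{\Gamma}}\textbf{B}^{-1}$ solves the $\xtilde{\textbf{X}}$-problem is already enough.
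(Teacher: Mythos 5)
Your proposal is correct and follows essentially the same route as the paper's own proof: direct verification of the two conditions of Definition \ref{def:estimating} via the affine equivariance $\hat{\textbf{S}}_j[\xtilde{\textbf{X}}] = \textbf{B}\hat{\textbf{S}}_j[\textbf{X}]\textbf{B}^\textnormal{H}$, the cancellation of the $\textbf{B}$-factors, and the invariance of the diagonal matrix $\hat{\boldsymbol{\Lambda}}_\tau$ under conjugation by a phase-shift matrix. Your additional remarks on the sample mean and the eigenvalue ordering are harmless elaborations of points the paper leaves implicit.
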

Justified by the affine invariance property given in Lemmas \ref{lma:affeqv} and  \ref{lma:affinvsample}, we can, without loss of generality, derive the rest of the theory under the assumption of trivial mixing, that is, the case when the mixing matrix is $\textbf{A} = \textbf{I}_d$. See also the beginning of the Proof of Lemma \ref{lma:consistent}. 

We next consider limiting properties of the finite sample solutions. Note that, the finite sample statistics and the sampled stochastic process $\textbf{X}$ all depend on the sample size $T$. Furthermore, we mainly follow the multivariate probabilistic Bachmann-Landau notation presented in \cite{van2000}. We use the short expression $\textbf{X}_{T} = o_p(1)$ to denote that a sequence of $\mathbb{C}^{d\times d}$-matrices $\textbf{X}_{1},\textbf{X}_{2}\ldots,$ converges in probability to a zero matrix as $T\rightarrow \infty$.  In addition, we use $\textbf{Y}_{\beta} = \mathcal{O}_p(1)$ to denote that a collection of $\mathbb{C}^{d\times d}$-matrices $\{\textbf{Y}_{\beta} : \beta\in B \}$ is uniformly tight, i.e., bounded in probability under some non-empty indexing set $B$. By saying that a matrix is uniformly tight, we mean that every element of the matrix is uniformly tight. Note that in the framework of this paper, our usage of the notation is equivalent with the one presented in  \cite{van2000}.

\begin{lma}
\label{lma:consistent}
Let $\textbf{x}_\textnormal{\tiny{\textbullet}} \coloneqq \left(\textbf{x}_t\right)_{t\in \mathbb{N}}$ be a process that satisfies Definition \ref{model:BSS} and let $\textbf{X}$ be a $ \mathbb{C}^{T \times d}$-valued, $1\leq d < T < \infty$, sampled stochastic process generated by $\textbf{x}_\textnormal{\tiny{\textbullet}}$ and  let $\hat{g} : \textbf{C} \mapsto (\textbf{C} - \textbf{1}_T \hat{\boldsymbol{\mu}}^\top) \hat{\boldsymbol{\Gamma}}^\top$ be a $T$-indexed sequence of corresponding finite sample solutions defined in Lemma \ref{lma:samplesol}. Furthermore, let $\alpha_T(\hat{\textbf{S}_0}[\textbf{X}] - {\textbf{S}}_0[\textbf{x}_t]) = \mathcal{O}_p(1)$ and $\beta_T(\hat{\textbf{S}_\tau}[\textbf{X}] - {\textbf{S}}_\tau[\textbf{x}_t]) = \mathcal{O}_p(1)$, for some real-valued sequences $\alpha_T$ and $\beta_T$, which satisfy $\alpha_T \uparrow \infty$, $\beta_T \uparrow \infty$ as $T \rightarrow \infty$. Then, there exists a  sequence of $T$-indexed phase-shift matrices $\xhatt{\textbf{J}}$, such that the following holds asymptotically,
\begin{align*}
\gamma_T \left( \xhatt{\textbf{J}}\hat{\boldsymbol{\Gamma}} -  \boldsymbol{\Gamma} \right) = \mathcal{O}_p(1),
\end{align*}
where $\gamma_t = \min(\alpha_t,\beta_t)$, $\forall t\in\mathbb{N}$.
\end{lma}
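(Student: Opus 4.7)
The plan is to begin by invoking the affine invariance of Lemma \ref{lma:affinvsample} to reduce to the trivial mixing case $\textbf{A} = \textbf{I}_d$. Under this reduction the population quantities become $\boldsymbol{\Gamma} = \textbf{I}_d$, $\textbf{S}_0[\textbf{x}_t] = \textbf{I}_d$ and $\textbf{S}_\tau[\textbf{x}_t] = \boldsymbol{\Lambda}_\tau$, the latter diagonal with distinct real entries by condition (4), while the tightness rates $\alpha_T, \beta_T$ transfer up to multiplicative constants depending only on $\textbf{A}^{-1}$. The finite sample estimator from Lemma \ref{lma:samplesol} takes the form $\hat{\boldsymbol{\Gamma}} = \hat{\textbf{V}}^\textnormal{H} \hat{\boldsymbol{\Sigma}}_0$, with $\hat{\boldsymbol{\Sigma}}_0 = \hat{\textbf{S}}_0^{-1/2}$ conjugate symmetric and $\hat{\textbf{V}}$ unitary, diagonalizing the whitened matrix $\hat{\textbf{M}} \coloneqq \hat{\boldsymbol{\Sigma}}_0 \hat{\textbf{S}}_\tau \hat{\boldsymbol{\Sigma}}_0$.

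Next I would control the two building blocks separately. Writing $\hat{\textbf{S}}_0 = \textbf{I}_d + \textbf{E}_0$ with $\textbf{E}_0 = \mathcal{O}_p(\alpha_T^{-1})$, a first-order expansion of the smooth Hermitian map $\textbf{M} \mapsto \textbf{M}^{-1/2}$ near $\textbf{I}_d$ yields $\hat{\boldsymbol{\Sigma}}_0 - \textbf{I}_d = \mathcal{O}_p(\alpha_T^{-1})$. Combined with the assumption $\hat{\textbf{S}}_\tau - \boldsymbol{\Lambda}_\tau = \mathcal{O}_p(\beta_T^{-1})$, a direct algebraic expansion gives
\begin{align*}
\hat{\textbf{M}} - \boldsymbol{\Lambda}_\tau = (\hat{\textbf{S}}_\tau - \boldsymbol{\Lambda}_\tau) + (\hat{\boldsymbol{\Sigma}}_0 - \textbf{I}_d)\hat{\textbf{S}}_\tau \hat{\boldsymbol{\Sigma}}_0 + \boldsymbol{\Lambda}_\tau(\hat{\boldsymbol{\Sigma}}_0 - \textbf{I}_d) = \mathcal{O}_p(\gamma_T^{-1}),
\end{align*}
since both $\hat{\boldsymbol{\Sigma}}_0$ and $\hat{\textbf{S}}_\tau$ are $\mathcal{O}_p(1)$ and $\gamma_T = \min(\alpha_T,\beta_T)$.

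Because $\hat{\textbf{M}}$ is conjugate symmetric and $\boldsymbol{\Lambda}_\tau$ has pairwise distinct real eigenvalues, I would then invoke standard eigenvector perturbation theory for Hermitian matrices (Davis--Kahan, or the implicit function theorem applied to the characteristic polynomial). This produces, for each $k\in\{1,\dots,d\}$, a unit-norm eigenvector $\hat{\textbf{v}}_k$ of $\hat{\textbf{M}}$ whose rank-one spectral projector $\hat{\textbf{v}}_k \hat{\textbf{v}}_k^\textnormal{H}$ approximates $\textbf{e}_k \textbf{e}_k^\textnormal{H}$ at rate $\gamma_T^{-1}$. I would then select a measurable phase $e^{i\hat{\theta}_k}$ (for instance by normalizing the $k$th coordinate of $\hat{\textbf{v}}_k$ to the positive reals) so that $e^{-i\hat{\theta}_k}\hat{\textbf{v}}_k - \textbf{e}_k = \mathcal{O}_p(\gamma_T^{-1})$. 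Assembling these phases into the phase-shift matrix $\xhatt{\textbf{J}} \coloneqq \textnormal{diag}(e^{i\hat{\theta}_1},\dots,e^{i\hat{\theta}_d})$, the decomposition
\begin{align*}
\xhatt{\textbf{J}}\hat{\boldsymbol{\Gamma}} - \textbf{I}_d = \bigl(\xhatt{\textbf{J}}\hat{\textbf{V}}^\textnormal{H} - \textbf{I}_d\bigr)\hat{\boldsymbol{\Sigma}}_0 + \bigl(\hat{\boldsymbol{\Sigma}}_0 - \textbf{I}_d\bigr)
\end{align*}
is $\mathcal{O}_p(\gamma_T^{-1})$, which delivers the claim.

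The main obstacle will be the careful handling of the complex phase gauge in the eigenvector step. In the real-valued setting of \cite{miettinen2012} this ambiguity collapses to a binary sign choice, whereas here one has to fix a continuous $\mathbb{U}(1)$-phase for each eigenvector and verify that the selection is measurable and uniformly tight in $T$. The distinctness of the $\lambda_\tau^{(k)}$ guarantees that the eigenvalue gaps of $\hat{\textbf{M}}$ stay bounded away from zero with probability tending to one, which keeps the Davis--Kahan denominators controlled and lets the coordinatewise phase normalization succeed globally rather than only locally.
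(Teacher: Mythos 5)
Your proposal is correct and follows essentially the same route as the paper's proof: reduction to trivial mixing by affine invariance, control of $\hat{\boldsymbol{\Sigma}}_0 - \textbf{I}_d$ and of the whitened matrix $\hat{\boldsymbol{\Sigma}}_0\hat{\textbf{S}}_\tau\hat{\boldsymbol{\Sigma}}_0 - \boldsymbol{\Lambda}_\tau$ at rate $\gamma_T^{-1}$, an eigenvector perturbation bound exploiting the distinct eigenvalues of $\boldsymbol{\Lambda}_\tau$, a coordinatewise phase normalization, and the same final decomposition of $\hat{\textbf{J}}\hat{\boldsymbol{\Gamma}} - \textbf{I}_d$. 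The only divergence is that where you cite Davis--Kahan as a black box, the paper keeps the perturbation step self-contained: it derives the off-diagonal bound $\gamma_T\hat{\textbf{V}}_{jk} = \mathcal{O}_p(1)$ directly from the commutation relation $\hat{\boldsymbol{\Sigma}}_\tau\hat{\textbf{V}} = \hat{\textbf{V}}\hat{\boldsymbol{\Lambda}}_\tau$ together with the eigenvalue gap, and then controls the phase-corrected diagonal entries by unitarity alone, via $1-|\hat{\textbf{V}}_{kk}| \le \sum_{h\ne k}|\hat{\textbf{V}}_{hk}|^2 = \mathcal{O}_p(\gamma_T^{-2})$.
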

Under the assumptions of Lemma \ref{lma:consistent}, we have that the sequence of unmixing matrix estimators is consistent in the sense that there exists a sequence of $T$-indexed phase-shift matrices $\hat{\textbf{J}}$ such that $\hat{\textbf{J}}\hat{\boldsymbol{\Gamma}} $ converges in probability to $\boldsymbol{\Gamma}$, as $T\rightarrow \infty$. This convergence in probability follows directly from Lemma \ref{lma:inverse}.

\begin{thm}
\label{thm:limiting}
Let $\textbf{X}$, $g$ and $\hat{g}$ be defined as in Lemma \ref{lma:consistent} and denote the element $(j,k)$ of  $ \hat{\textbf{S}_\tau}[\textbf{X}]$ as  $[\hat{\textbf{S}_\tau}]_{jk}$. Then, under the assumptions of Lemma \ref{lma:consistent} and the  trivial mixing scenario $\textbf{A} = \textbf{I}_d$, there exists a sequence of $T$-indexed phase-shift matrices $\xhatt{\textbf{J}}$, such that,
\begin{align*}
&\gamma_T\left(\xhatt{\textbf{J}}_{jj}\hat{\boldsymbol{\Gamma}}_{jj} - 1\right) = \frac{\gamma_T}{2}\left( \left[\hat{\textbf{S}_0}\right]_{jj} - 1 \right) +\mathcal{O}_p(1/\gamma_T), \quad \forall j\in\{1,\ldots,d\} \quad\textnormal{ and}\\
&\gamma_T\left(\lambda^{(k)}_\tau- \lambda^{(j)}_\tau\right) \xhatt{\textbf{J}}_{jj} \hat{\boldsymbol{\Gamma}}_{jk} =\gamma_T\left( \lambda^{(j)}_\tau \left[\hat{\textbf{S}_0}\right]_{jk} - \left[\hat{\textbf{S}_\tau}\right]_{jk}\right) + \mathcal{O}_p(1/\gamma_T), \quad j\neq k.
\end{align*}
\end{thm}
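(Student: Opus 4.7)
The plan is to linearize both defining identities of Definition \ref{def:estimating} around the trivial mixing solution $\boldsymbol{\Gamma} = \textbf{I}_d$ and then read off the diagonal and off-diagonal entries of the perturbation. By Lemma \ref{lma:consistent} and the assumption $\textbf{A} = \textbf{I}_d$, there is a sequence of phase-shift matrices $\xhatt{\textbf{J}}$ for which $\textbf{E} \coloneqq \xhatt{\textbf{J}}\hat{\boldsymbol{\Gamma}} - \textbf{I}_d = \mathcal{O}_p(1/\gamma_T)$. Since $\xhatt{\textbf{J}}$ is unitary and both $\xhatt{\textbf{J}}$ and $\hat{\boldsymbol{\Lambda}}_\tau$ are diagonal, conjugating the two identities of Definition \ref{def:estimating} by $\xhatt{\textbf{J}}$ leaves their right-hand sides unchanged, so I obtain
\begin{align*}
(\textbf{I}_d + \textbf{E})\hat{\textbf{S}}_0(\textbf{I}_d + \textbf{E})^\textnormal{H} &= \textbf{I}_d, \\
(\textbf{I}_d + \textbf{E})\hat{\textbf{S}}_\tau(\textbf{I}_d + \textbf{E})^\textnormal{H} &= \hat{\boldsymbol{\Lambda}}_\tau.
\end{align*}

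Next, I would decompose $\hat{\textbf{S}}_0 = \textbf{I}_d + \textbf{D}_0$ with $\textbf{D}_0 = \mathcal{O}_p(1/\alpha_T)$ and $\hat{\textbf{S}}_\tau = \boldsymbol{\Lambda}_\tau + \textbf{D}_\tau$ with $\textbf{D}_\tau = \mathcal{O}_p(1/\beta_T)$, expand both products, and collect every quadratic cross term ($\textbf{E}\hat{\textbf{S}}_0\textbf{E}^\textnormal{H}$, $\textbf{E}\textbf{D}_0$, $\textbf{D}_0\textbf{E}^\textnormal{H}$, and so on) into a remainder of size $\mathcal{O}_p(1/\gamma_T^2)$, using that $\gamma_T = \min(\alpha_T,\beta_T)$ dominates every such product. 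This reduces the two identities to the linearized system
\begin{align*}
\textbf{E} + \textbf{E}^\textnormal{H} + \textbf{D}_0 &= \mathcal{O}_p(1/\gamma_T^2), \\
\textbf{E}\boldsymbol{\Lambda}_\tau + \boldsymbol{\Lambda}_\tau\textbf{E}^\textnormal{H} + \textbf{D}_\tau &= \hat{\boldsymbol{\Lambda}}_\tau - \boldsymbol{\Lambda}_\tau + \mathcal{O}_p(1/\gamma_T^2).
\end{align*}
Reading off the $(j,k)$ entry for $j\neq k$, where $\hat{\boldsymbol{\Lambda}}_\tau - \boldsymbol{\Lambda}_\tau$ vanishes because it is diagonal, yields the pair of scalar equations
\[
E_{jk} + \overline{E_{kj}} = -\bigl[\hat{\textbf{S}}_0\bigr]_{jk} + \mathcal{O}_p(1/\gamma_T^2), \qquad
\lambda_\tau^{(k)} E_{jk} + \lambda_\tau^{(j)} \overline{E_{kj}} = -\bigl[\hat{\textbf{S}}_\tau\bigr]_{jk} + \mathcal{O}_p(1/\gamma_T^2).
\]
Multiplying the first equation by $\lambda_\tau^{(j)}$ and subtracting the second eliminates $\overline{E_{kj}}$ and, after noting that $E_{jk} = \xhatt{\textbf{J}}_{jj}\hat{\boldsymbol{\Gamma}}_{jk}$ for $j\neq k$, produces exactly the off-diagonal claim once multiplied through by $\gamma_T$.

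For the diagonal part, the $(j,j)$ entry of the linearized scaling equation gives $2\,\textnormal{Re}(E_{jj}) = -(\bigl[\hat{\textbf{S}}_0\bigr]_{jj} - 1) + \mathcal{O}_p(1/\gamma_T^2)$. This is where the remaining freedom in $\xhatt{\textbf{J}}$ enters: since Lemma \ref{lma:consistent} implies that $\hat{\boldsymbol{\Gamma}}_{jj}$ is asymptotically nonzero, I may choose each diagonal entry of $\xhatt{\textbf{J}}$ to absorb the argument of $\hat{\boldsymbol{\Gamma}}_{jj}$ so that $\xhatt{\textbf{J}}_{jj}\hat{\boldsymbol{\Gamma}}_{jj}$ is real and positive for all sufficiently large $T$. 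With $E_{jj}$ thereby real, the real-part identity becomes an identity for $E_{jj} = \xhatt{\textbf{J}}_{jj}\hat{\boldsymbol{\Gamma}}_{jj} - 1$ itself, which on multiplication by $\gamma_T$ delivers the claimed diagonal expansion.

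The principal obstacle is the careful bookkeeping of the remainder orders: one must verify that every cross term discarded in the linearization is genuinely $\mathcal{O}_p(1/\gamma_T^2)$, which requires simultaneously tracking the three rates $\alpha_T, \beta_T, \gamma_T$ and repeatedly invoking the bound $\max(1/\alpha_T,1/\beta_T) \leq 1/\gamma_T$ together with the consistency $\textbf{E} = \mathcal{O}_p(1/\gamma_T)$. A secondary subtlety is that the off-diagonal elimination step requires the eigenvalue gaps $\lambda_\tau^{(k)} - \lambda_\tau^{(j)}$ to be strictly nonzero, which is supplied by the distinctness assumption on $\boldsymbol{\Lambda}_\tau$ in Definition \ref{model:BSS}.
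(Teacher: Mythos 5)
Your proposal is correct and follows essentially the same route as the paper's own proof: linearize the two defining identities of Definition \ref{def:estimating} around $\textbf{I}_d$, absorb all quadratic cross terms into an $\mathcal{O}_p(1/\gamma_T^2)$ remainder via $\gamma_T=\min(\alpha_T,\beta_T)$, use the scaling equation to eliminate $\textbf{E}^\textnormal{H}$ from the autocovariance equation, and read off the diagonal and off-diagonal entries after fixing the phases so that the diagonal of $\xhatt{\textbf{J}}\hat{\boldsymbol{\Gamma}}$ is real and positive. The only cosmetic difference is that the paper performs the elimination at the matrix level (arriving at $\hat{\textbf{G}}\boldsymbol{\Lambda}_\tau-\boldsymbol{\Lambda}_\tau\hat{\textbf{G}}=\cdots$) whereas you eliminate $\overline{E_{kj}}$ entrywise, which is the same computation.
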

By Theorem \ref{thm:limiting}, we can directly find the asymptotic distribution of the unmixing matrix estimator $\hat{\boldsymbol{\Gamma}}$, if we have the asymptotic distributions, and the convergence rates, of the estimators $\hat{\textbf{S}}_0$ and $\hat{\textbf{S}}_\tau$. Note that, if the rates $\alpha_T$ and $\beta_T$, given in Lemma \ref{lma:consistent}, differ, the estimator with the faster convergence will tend to zero in probability in Theorem \ref{thm:limiting}. Furthermore, note that the asymptotic distributions of the diagonal elements of $\gamma_T(\hat{\textbf{J}}\hat{\boldsymbol{\Gamma}} -\textbf{I}_d)$ only depend on the asymptotic distribution of the covariance matrix estimator $\hat{\textbf{S}}_0$. However, the rate of convergence of the off-diagonal elements depends on both $\hat{\textbf{S}}_0$ and $\hat{\textbf{S}}_\tau$, and consequently, if the covariance estimator $\hat{\textbf{S}}_0$ converges faster than the autocovariance estimator $\hat{\textbf{S}}_\tau$, the diagonal elements of   $\gamma_T(\hat{\textbf{J}}\hat{\boldsymbol{\Gamma}} -\textbf{I}_d)$ converge to zero in distribution and in probability. Conversely, it may happen that the autocovariance estimator $\hat{\textbf{S}}_\tau$ converges faster. In that case, the off-diagonal elements of   $\gamma_T(\hat{\textbf{J}}\hat{\boldsymbol{\Gamma}} -\textbf{I}_d)$ converge to zero in distribution and in probability. Examples are given in Subsection \ref{sec:noncentral}.

We again want to highlight that, aside from some minor model assumptions, affine equivariance is the only property that we require from the matrix valued estimators, and the estimators can be directly replaced with some other affine equivariant estimators. The estimation procedure could, for example, be robustified and Theorem \ref{thm:limiting} would directly give the asymptotic behavior of the robustified alternative, given that the asymptotic behavior of the robust estimators is known.

We can present Theorem \ref{thm:limiting} in matrix form as follows,
\begin{align*}
&\gamma_T \left( \textnormal{diag}\left[ \hat{\textbf{J}}\hat{\boldsymbol{\Gamma}} - \textbf{I}_d \right] \right) = \gamma_T\left(  \frac{1}{2}  \textnormal{diag} \left[\hat{\textbf{S}}_0 - \textbf{I}_d \right] \right) + \mathcal{O}_p(1/\gamma_T),\\
&\gamma_T \left( \hat{\textbf{J}}\hat{\boldsymbol{\Gamma}} - \textnormal{diag}\left[ \hat{\textbf{J}}\hat{\boldsymbol{\Gamma}}\right] \right) = \gamma_T\left(  \textbf{H}  \odot \left[\boldsymbol{\Lambda}_\tau\hat{\textbf{S}}_0 - \hat{\textbf{S}}_\tau   \right] \right) + \mathcal{O}_p(1/\gamma_T),
\end{align*}
where $\textbf{H}_{jj} = 0$ and $\textbf{H}_{jk} = (\lambda_\tau^{(k)} - \lambda_\tau^{(j)})^{-1}$, $j\neq k$, $\odot$ denotes the Hadamard, i.e., the entrywise product  and $\hat{\textbf{J}}$ is the $T$-indexed sequence of phase-shift matrices that set the diagonal elements of $\hat{\boldsymbol{\Gamma}}$ to be on the positive real-axis.

Theorem  \ref{thm:limiting} can be applied under any nonsingular mixing. Let $g$ and $\hat{g}$ be a solution and a finite sample solution, respectively, under the nonsingular mixing matrix $\textbf{A}$ and denote the corresponding unmixing matrix as $\boldsymbol{\Gamma}$ and denote the corresponding sample unmixing matrix estimator as $\hat{\boldsymbol{\Gamma}}$. Likewise, let $g_0$ and $\hat{g}_0$ be a solution and a finite sample solution, respectively, under the trivial mixing matrix and denote the corresponding sample unmixing matrix estimate as $\hat{\boldsymbol{\Gamma}}_0$. The following relation can then be applied to generalize  results under the trivial mixing to any nonsingular mixing scenario,
\begin{align*}
\gamma_T\left(\textnormal{vec}\left[\hat{\textbf{J}}\hat{\boldsymbol{\Gamma}} -  \boldsymbol{\Gamma} \right]\right) =\gamma_T\left(\textbf{A}^{-\top} \otimes \textbf{I}_d \right)\left(\textnormal{vec}\left[\hat{\textbf{J}}_0\hat{\boldsymbol{\Gamma}}_0 -  \textbf{I}_d \right]\right),
\end{align*}
where $\textbf{A}^{-\top} $ is the inverse of the transpose, $\otimes$ denotes the Kronecker product,  $\hat{\textbf{J}}$ is a $T$-indexed sequence of phase-shift matrices that set the phases of the diagonal elements of $\hat{\boldsymbol{\Gamma}}$ and $\boldsymbol{\Gamma}$ to be equal and  $\hat{\textbf{J}}_0$ is a $T$-indexed sequence of phase-shift matrices that set the phases of the diagonal elements of $\hat{\boldsymbol{\Gamma}}_0$ to be on  the positive real-axis.

In the following subsections, we present examples of convergence of the autocovariance and covariance matrix estimators under a broad class of stochastic processes, notably, including long-range dependent processes.

\subsection{Limiting distributions under summable covariance structures}\label{sec:bmajor}
In this section, we consider a class of stochastic processes that satisfy the Breuer-Major Theorem for weakly stationary processes, which we from hereon refer to as the Breuer-Major Theorem. The Breuer-Major Theorem is  considered in the context of Gaussian subordinated processes, see e.g. \cite{breuer-major, arcones1994limit, ivan-et-al}. A univariate real-valued Gaussian subordinated process ${z}_\textnormal{\tiny{\textbullet}}$, defined on a probability space $(\Omega,\mathcal{F},\mathbb{P})$, is a weakly stationary process that can be expressed in the form ${z}_\textnormal{\tiny{\textbullet}} = f \circ \textbf{y}_\textnormal{\tiny{\textbullet}}$, where $\textbf{y}_\textnormal{\tiny{\textbullet}}$ is a $\mathbb{R}^\ell$-variate Gaussian process and $f : \mathbb{R}^\ell \rightarrow \mathbb{R}$ is a measurable function.

We emphasize that Gaussian subordinated processes form a very rich model class. For example, recently in \cite{viitasaari2017}, the authors showed that arbitrary one dimensional marginal distributions and a rich class of covariance structures can be modeled by $f \circ \textbf{y}_\textnormal{\tiny{\textbullet}}$ with simple univariate stationary Gaussian process $\textbf{y}_\textnormal{\tiny{\textbullet}}$. While the model class is very rich, underlying driving Gaussian processes still provide a large toolbox for analyzing limiting behavior of various estimators. Such limit theorems have been a topic of active research for decades. For recent relevant papers on the topic, we refer to \cite{ivan-et-al,nou-nua-funcBM,nou-pec2,nou-pec1,n-p} and the references therein.

We next give the definitions of Hermite polynomials and Hermite ranks. We define the $k$th, $k \in \{0\}\cup \mathbb{N}$, (probabilistic) Hermite polynomial $H_k$, using Rodrigues' formula,
\begin{align*}
H_k(x) = (-1)^k \exp\left({x^2}/{2}\right) \frac{d^k}{dx^k}\exp\left(-{x^2}/{2}\right).
\end{align*}
The first four Hermite polynomials are hereby $H_0(x) = 1$, $H_1(x) = x$, $H_2(x) = x^2-1$ and $H_3(x) = x^3 - 3x$. The set $\{H_k/\sqrt{k!} : k \in\{0\}\cup \mathbb{N}\}$ forms an orthonormal basis on the Hilbert-space $\mathcal{L}^2(\mathbb{R}, P_y)$, where $P_y$  denotes the law of a univariate standard Gaussian random variable. Consequently, every function $f \in \mathcal{L}^2(\mathbb{R}, P_y)$ can be decomposed as,
\begin{align}
\label{eq:hermitedecomp}
f(x) = \sum_{k=0}^\infty a_k H_k(x),
\end{align}
where $a_k \in\mathbb{R}$ for every $k\in\{0\}\cup \mathbb{N}$. If $x$ and $y$ follow the univariate standard Gaussian distribution, the orthogonality of the Hermite polynomials and the decomposition of Equation \eqref{eq:hermitedecomp} give,
\begin{align*}
\mathbb{E}\left[f(x) f(y) \right] = \sum_{k=0}^\infty k! \alpha_k^2\left( \mathbb{E}\left[(x - \mathbb{E}[x] )(y - \mathbb{E}[y]) \right] \right)^k = \sum_{k=0}^\infty k! \alpha_k^2\left( \textnormal{Cov}[x,y]\right)^k.
\end{align*}
The Hermite rank for a function $f$ is defined as follows.
\begin{dfn}[Hermite rank]
Let $\textbf{y}$ be a $\mathbb{R}^\ell$-valued Gaussian random vector, $\ell \in\mathbb{N}$, and let $f: \mathbb{R}^\ell \rightarrow \mathbb{R}$ be a function such that $f\circ\textbf{y}$ is square-integrable. The function $f$ has Hermite rank $q$, with respect to $\textbf{y}$, if
\begin{align*}
\mathbb{E}\left[ \left( f\left(\textbf{y}\right) - \mathbb{E}\left[f \left(\textbf{y}\right) \right]\right)p_m\left(\textbf{y}\right)\right]=0,
\end{align*}
for all polynomials $p_m: \mathbb{R}^d \rightarrow \mathbb{R}$ that are of degree $m \leq q-1$ and there exists a polynomial $p_q$ of degree $q$ such that
\begin{align*}
\mathbb{E}\left[ \left( f\left(\textbf{y}\right) - \mathbb{E}\left[f \left(\textbf{y}\right) \right]\right)p_q\left(\textbf{y}\right)\right]\neq 0.
\end{align*}
\end{dfn}
Note that, in one dimensional setting, the Hermite rank $q$ of a function $f$ is the smallest non-negative integer in Equation \eqref{eq:hermitedecomp}, such that $\alpha_q \neq 0$.  We next present the multivariate version of the well-known Breuer-Major Theorem  from \cite{breuer-major}.
\begin{thm}
\label{thm:breuer-major2}
Let $\textbf{y}_\textnormal{\tiny{\textbullet}} \coloneqq (\textbf{y}_t)_{t\in \mathbb{N}} $ be a $\mathbb{R}^\ell$-valued centered stationary Gaussian process. Let $f_1,f_2,\ldots,f_{d} : \mathbb{R}^\ell \rightarrow \mathbb{R} $ be measurable functions that have at least Hermite rank $q \in \mathbb{N}$, with respect to $\textbf{y}_\textnormal{\tiny{\textbullet}}$, and let $f_j \circ \textbf{y}_\textnormal{\tiny{\textbullet}}$ be square-integrable for every $j\in\{1,\ldots d\}$. Additionally, let the series of covariances satisfy
\begin{align*}
\sum_{\tau=0}^\infty \left|\ddot{\textnormal{S}}_\tau\left[y_1^{(j)}, y_1^{(k)} \right]\right|^q = \sum_{\tau=0}^\infty \left|\mathbb{E}\left[y_1^{(j)} y_{1+\tau}^{(k)} \right]\right|^q < +\infty, \quad \forall j,k \in \left\{1,\ldots,\ell\right\}.
\end{align*}
Then, the $\mathbb{R}^d$-sequence,
\begin{align*}
\frac{1}{\sqrt{T}}\begin{pmatrix}\sum_{t=1}^T \left(f_1(\textbf{y}_t) - \mathbb{E}\left[f_1(\textbf{y}_t)  \right]  \right) &\cdots &\sum_{t=1}^T \left(f_d(\textbf{y}_t) - \mathbb{E}\left[f_d(\textbf{y}_t)  \right]  \right)  \end{pmatrix}^\top
\end{align*}
converges in distribution, as $T\rightarrow \infty$, to a centered Gaussian vector $\boldsymbol{\rho}^\top = \begin{pmatrix}\rho_1 &\cdots &\rho_d\end{pmatrix}$ with finite covariances $\mathbb{E}\left[\rho_j \rho_k \right] $ equal to,
\begin{align*}
{\textnormal{S}}_0\left[f_j(\textbf{y}_1), f_k(\textbf{y}_1) \right]+ \sum_{\tau=1}^\infty \left( \ddot{\textnormal{S}}_\tau \left[f_j(\textbf{y}_1), f_k(\textbf{y}_1) \right]  + \ddot{\textnormal{S}}_\tau \left[f_k(\textbf{y}_1),f_j(\textbf{y}_1) \right]  \right).
\end{align*}
\end{thm}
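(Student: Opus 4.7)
The plan is to reduce the multivariate claim to the scalar Breuer--Major theorem for functionals of a stationary $\mathbb{R}^\ell$-valued Gaussian process, via the Cram\'er--Wold device. Fix an arbitrary vector $\textbf{c} = (c_1,\ldots,c_d)^\top \in \mathbb{R}^d$ and set
\[
g_\textbf{c}(\textbf{y}) \coloneqq \sum_{j=1}^d c_j \bigl( f_j(\textbf{y}) - \mathbb{E}[f_j(\textbf{y}_1)] \bigr).
\]
The reduction amounts to showing that $T^{-1/2}\sum_{t=1}^T g_\textbf{c}(\textbf{y}_t) \xrightarrow[T\rightarrow\infty]{\mathcal{D}} \mathcal{N}(0, \textbf{c}^\top \boldsymbol{\Sigma}\textbf{c})$, where $\boldsymbol{\Sigma}_{jk} = \mathbb{E}[\rho_j\rho_k]$ is the covariance appearing in the conclusion.

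Next, I would verify the three hypotheses needed to apply the scalar Breuer--Major theorem to $g_\textbf{c} \circ \textbf{y}_\textnormal{\tiny{\textbullet}}$. Square-integrability of $g_\textbf{c}(\textbf{y}_t)$ is immediate from Minkowski's inequality. For the Hermite rank, since each $f_j$ satisfies $\mathbb{E}[(f_j(\textbf{y}_1)-\mathbb{E}[f_j(\textbf{y}_1)]) p_m(\textbf{y}_1)] = 0$ for every polynomial $p_m$ of degree at most $q-1$, linearity of expectation yields the same vanishing for $g_\textbf{c}$, so $g_\textbf{c}$ inherits Hermite rank at least $q$ with respect to $\textbf{y}_\textnormal{\tiny{\textbullet}}$. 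The summability condition $\sum_\tau |\mathbb{E}[y_1^{(j)}y_{1+\tau}^{(k)}]|^q < \infty$ depends only on the driving Gaussian process and is exactly an assumption of the theorem, so it transfers verbatim to $g_\textbf{c}$.

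Then, the scalar Breuer--Major theorem in the form covering multivariate Gaussian inputs (see, e.g., \cite{arcones1994limit}) yields $T^{-1/2}\sum_{t=1}^T g_\textbf{c}(\textbf{y}_t) \xrightarrow{\mathcal{D}} \mathcal{N}(0,\sigma_\textbf{c}^2)$ with $\sigma_\textbf{c}^2 = \textnormal{Var}[g_\textbf{c}(\textbf{y}_1)] + 2\sum_{\tau=1}^\infty \textnormal{Cov}[g_\textbf{c}(\textbf{y}_1), g_\textbf{c}(\textbf{y}_{1+\tau})]$. Expanding by bilinearity gives $\textnormal{Cov}[g_\textbf{c}(\textbf{y}_1), g_\textbf{c}(\textbf{y}_{1+\tau})] = \sum_{j,k} c_j c_k \ddot{\textnormal{S}}_\tau[f_j(\textbf{y}_1), f_k(\textbf{y}_1)]$, and renaming the summation indices in the doubled sum produces the symmetrized expression $\sum_{j,k} c_j c_k\bigl(\ddot{\textnormal{S}}_\tau[f_j(\textbf{y}_1), f_k(\textbf{y}_1)] + \ddot{\textnormal{S}}_\tau[f_k(\textbf{y}_1), f_j(\textbf{y}_1)]\bigr)$, so that $\sigma_\textbf{c}^2 = \textbf{c}^\top\boldsymbol{\Sigma}\textbf{c}$. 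Cram\'er--Wold then delivers the claimed joint convergence.

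The main substantive step is the scalar Breuer--Major theorem itself, which I would invoke as a black box; its classical proof decomposes $g_\textbf{c}$ in the multivariate Hermite basis on $\mathcal{L}^2(\mathbb{R}^\ell,P_{\textbf{y}_1})$ and uses the diagram formula to bound the variance contributed by the $p$th Hermite chaos of the partial sum by a constant multiple of sums of $|\mathbb{E}[y_1^{(j)}y_{1+\tau}^{(k)}]|^p$ with $p\geq q$, before combining a Hermite-degree truncation with the CLT for each finite-chaos projection. Beyond invoking that result, the remaining work (Cram\'er--Wold reduction, rank-inheritance, and variance bookkeeping) is routine.
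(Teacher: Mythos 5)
Your proposal is correct and follows exactly the route the paper itself indicates: the paper omits the proof of Theorem \ref{thm:breuer-major2}, stating only that it follows from the univariate Breuer--Major theorem of \cite{breuer-major} combined with the Cram\'er--Wold device, which is precisely the reduction you carry out (with the additional, correct, verification that linear combinations preserve square-integrability and Hermite rank at least $q$, and that the variance bookkeeping yields $\textbf{c}^\top\boldsymbol{\Sigma}\textbf{c}$).
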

The proof of Theorem \ref{thm:breuer-major2} is omitted here. Theorem \ref{thm:breuer-major2} follows directly from the univariate Breuer-Major Theorem, given in \cite{breuer-major}, and by using the usual Cram\'{e}r-Wold device, see e.g. \cite{billingsley2012}[Theorem 29.4]. A similar version of Theorem \ref{thm:breuer-major2} can also be found in Section 5 of \cite{arcones1994limit}.

We next present the following assumption which enables us to find the asymptotic behaviour of the unmixing matrix estimator using the Breuer-Major Theorem.

\begin{ass}
\label{ass:summablecov}
Let $\textbf{x}_\textnormal{\tiny{\textbullet}} \coloneqq \left(\textbf{x}_t\right)_{t\in \mathbb{N}}$ be a process that satisfies Definition \ref{model:BSS} with the trivial mixing matrix $\textbf{A} = \textbf{I}_d$ and let $\textbf{X}$ be a  $ \mathbb{C}^{T \times d}$-valued, $1\leq d < T < \infty$, sampled stochastic process generated by $\textbf{x}_\textnormal{\tiny{\textbullet}}$. Denote  $\textbf{z}_\textnormal{\tiny{\textbullet}} = (\textbf{b}_t +i\textbf{c}_t)_{t\in \mathbb{N}} =\textbf{b}_\textnormal{\tiny{\textbullet}}  + i\textbf{c}_\textnormal{\tiny{\textbullet}}  $. We assume that there exists $\ell\in \mathbb{N}$ and a centered $\mathbb{R}^{\ell}$-valued stationary Gaussian process
$\boldsymbol{\eta}_{\textnormal{\tiny{\textbullet}}}$, such that, for all $k\in\{1,\ldots d\}$, the component  $\textbf{b}_\textnormal{\tiny{\textbullet}}^{(k)}$ has the same finite-dimensional distributions and, asymptotically, the same autocovariance function as $\tilde{f}_k(\boldsymbol{\eta}_{\textnormal{\tiny{\textbullet}}})$, for some function $\tilde{f}_k : \mathbb{R}^{\ell} \rightarrow \mathbb{R}$. Similarly, we assume that, for all $k\in\{1,\ldots d\}$,  the component  $\textbf{c}_\textnormal{\tiny{\textbullet}}^{(k)}$ has the same finite-dimensional distributions and, asymptotically, the same autocovariance function as $\tilde{f}_{k+d}(\boldsymbol{\eta}_{\textnormal{\tiny{\textbullet}}})$, for some function $\tilde{f}_k : \mathbb{R}^{\ell} \rightarrow \mathbb{R}$. Furthermore, we assume that
$\mathbb{E}[|\tilde{f}_k(\boldsymbol{\eta}_1)|^4] < +\infty$, $\forall k \in \{1,\ldots,2d\}$,  and that $r_{\boldsymbol{\eta}}^{(j,k)}(t) = \mathbb{E}\left[ \boldsymbol{\eta}^{(j)}_{t+1} \boldsymbol{\eta}^{(k)}_1\right]$ satisfies $r_{\boldsymbol{\eta}}^{(j,j)}(0)=1$, $\forall  j\in \{1,\ldots,\ell\} $, and
\begin{align*}
 \sum_{t=1}^\infty \left|r_{\boldsymbol{\eta}}^{(j,k)}(t) \right| < + \infty , \quad \forall j,k\in \left\{1,\ldots,{\ell}\right\}.
\end{align*}
\end{ass}
Note that, the finite dimensional distributions of the $\mathbb{R}$-valued stochastic process ${c}_\textnormal{\tiny{\textbullet}}$ is the collection of probability measures defined as,
\begin{align*}
\left\{ \mathbb{P}\left[ \left\{{c}_{t_1}\!\! \in \! B_1,\ldots, {c}_{t_m}\!\!  \in \! B_m \right\} \right] \mid  m \! \in \! \mathbb{N} :  \forall B_j \in \mathcal{B}(\mathbb{R}) \textnormal{ and } \{t_1, \ldots, t_m\}\! \subset \mathbb{N} \right\}.
\end{align*}

We  again want to emphasize that a wide class of stochastic processes satisfy Assumption \ref{ass:summablecov}. Indeed, we allow an
arbitrary dimension $\ell$ for the driving Gaussian process $\boldsymbol{\eta}_{\textnormal{\tiny{\textbullet}}}$ and  arbitrary (summable) covariance structures. In comparison, it was shown in \cite{viitasaari2017} that in many cases it would suffice to relate only one stationary Gaussian process $\boldsymbol{\eta}^{(j)}_{\textnormal{\tiny{\textbullet}}}$ to each function~$\tilde{f}_j$.

We are now ready to consider the asymptotic distribution for the unmixing matrix estimator under Assumption \ref{ass:summablecov}.

\begin{thm}
\label{thm:covconvergence}

Let Assumption \ref{ass:summablecov} be satisfied and let $\hat{g} : \textbf{C} \mapsto (\textbf{C} - \textbf{1}_T \hat{\boldsymbol{\mu}}^\top)\hat{\boldsymbol{\Gamma}}^\top $ be a $T$-indexed sequence of finite sample solutions. Then, under the trivial mixing scenario $\textbf{A} = \textbf{I}_d$, there exists a $T$-indexed sequence $\xhatt{\textbf{J}}$ of phase-shift matrices, such that
\begin{align*}
\sqrt{T} \cdot \textnormal{vec}\left(\xhatt{\textbf{J}}\hat{\boldsymbol{\Gamma}} - \textbf{I}_d\right) \xrightarrow[T\rightarrow \infty]{\mathcal{D}} \boldsymbol{\nu} \sim \mathcal{N}_{d^2}(\textbf{0}, \boldsymbol{\Sigma}_{\boldsymbol{\nu}}, \textbf{P}_{\boldsymbol{\nu}}).
\end{align*}
The exact forms for $\boldsymbol{\Sigma}_{\boldsymbol{\nu}}$ and $\textbf{P}_{\boldsymbol{\nu}}$ are given in Appendix \ref{app:B}.
\end{thm}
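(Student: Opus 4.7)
The plan is to combine the first-order expansion of Theorem \ref{thm:limiting} with the multivariate Breuer-Major Theorem (Theorem \ref{thm:breuer-major2}) and then lift the resulting real-valued limit to the complex plane via Corollary \ref{cor:eqvrealcomplex}.

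By Theorem \ref{thm:limiting}, once we know the joint asymptotic distribution of the entries of $\sqrt{T}(\hat{\textbf{S}}_0-\textbf{I}_d)$ and $\sqrt{T}(\hat{\textbf{S}}_\tau-\boldsymbol{\Lambda}_\tau)$ at rate $\gamma_T = \sqrt{T}$, the corresponding limit for $\sqrt{T}\,\textnormal{vec}(\xhatt{\textbf{J}}\hat{\boldsymbol{\Gamma}}-\textbf{I}_d)$ follows from a Slutsky-type argument after absorbing the $\mathcal{O}_p(T^{-1/2})$ remainder. Under trivial mixing, each such entry is a linear combination of terms of the form $\frac{1}{T-\tau}\sum_{j=1}^{T-\tau}\bigl(u_j^{(k)}-\hat{m}_k\bigr)\bigl(u_{j+\tau}^{(l)}-\hat{m}_l\bigr)$, where $u$ denotes the real part $\textbf{b}$ or the imaginary part $\textbf{c}$ of the latent process, and $\hat{m}_k,\hat{m}_l$ are the corresponding sample means.

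By Assumption \ref{ass:summablecov}, the finite-dimensional distributions of $(\textbf{b}_\textnormal{\tiny{\textbullet}},\textbf{c}_\textnormal{\tiny{\textbullet}})$ coincide with those of $(\tilde{f}_1\circ\boldsymbol{\eta}_\textnormal{\tiny{\textbullet}},\ldots,\tilde{f}_{2d}\circ\boldsymbol{\eta}_\textnormal{\tiny{\textbullet}})$, so every quadratic term $u_j^{(k)}u_{j+\tau}^{(l)}$ agrees jointly in distribution (as $j$ varies) with $F_{k,l,\tau}(\boldsymbol{\eta}_j,\boldsymbol{\eta}_{j+\tau})$ for a suitable measurable $F_{k,l,\tau}$. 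The paired sequence $(\boldsymbol{\eta}_j,\boldsymbol{\eta}_{j+\tau})_{j\in\mathbb{N}}$ is a $2\ell$-variate centered stationary Gaussian process whose cross-covariances inherit absolute summability from $\boldsymbol{\eta}_\textnormal{\tiny{\textbullet}}$, and each $F_{k,l,\tau}$ is square-integrable by Cauchy-Schwarz together with the $L^4$-hypothesis on $\tilde{f}_k$. Since every non-degenerate centered $F_{k,l,\tau}$ has Hermite rank at least one, Theorem \ref{thm:breuer-major2} (applied with $q=1$) combined with the Cram\'er-Wold device yields a joint real-valued Gaussian CLT for the uncentered sums.

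The sample-mean centering is asymptotically negligible: Breuer-Major applied directly to $\tilde{f}_k\circ\boldsymbol{\eta}_\textnormal{\tiny{\textbullet}}$ gives $\hat{\boldsymbol{\mu}}=\mathcal{O}_p(T^{-1/2})$, so the cross-terms $\hat{m}_k\hat{m}_l$, $\hat{m}_k(\cdots)$, $\hat{m}_l(\cdots)$ contribute $\mathcal{O}_p(T^{-1})$, and the discrepancy between the scalings $1/T$ and $1/(T-\tau)$ is $\mathcal{O}(1/T)$. Corollary \ref{cor:eqvrealcomplex} then transfers the real joint Gaussian limit into a complex Gaussian limit for $\sqrt{T}\,\textnormal{vec}(\hat{\textbf{S}}_0-\textbf{I}_d,\hat{\textbf{S}}_\tau-\boldsymbol{\Lambda}_\tau)$, and plugging this into the expansion of Theorem \ref{thm:limiting} gives the stated convergence. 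The main obstacle is not the convergence itself but the explicit derivation of $\boldsymbol{\Sigma}_{\boldsymbol{\nu}}$ and $\textbf{P}_{\boldsymbol{\nu}}$: this requires carefully tracking how the Breuer-Major covariance formula propagates through the linear map in Theorem \ref{thm:limiting} and the complexification identities of Corollary \ref{cor:eqvrealcomplex}, a bookkeeping exercise that we relegate to Appendix \ref{app:B}.
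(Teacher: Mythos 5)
Your proposal is correct and follows essentially the same route as the paper's proof: linearize via Theorem \ref{thm:limiting}, absorb the lag by passing to the $2\ell$-variate stationary Gaussian process $(\boldsymbol{\eta}_t,\boldsymbol{\eta}_{t+\tau})$, check square-integrability of the quadratic forms via Cauchy--Schwarz and the fourth-moment assumption, dispose of the sample-mean centering at rate $\mathcal{O}_p(1/T)$, apply the multivariate Breuer--Major theorem with Hermite rank at least one, and complexify with Corollary \ref{cor:eqvrealcomplex}. The only content you defer --- the explicit computation of $\boldsymbol{\Sigma}_{\boldsymbol{\nu}}$ and $\textbf{P}_{\boldsymbol{\nu}}$ --- is exactly the bookkeeping the paper carries out at the end of its proof in Appendix \ref{app:B}.
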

Note that, it is possible to present Theorem \ref{thm:covconvergence} with even weaker assumptions. In the current formulation, we require that the Gaussian process $\textbf{y}_\textnormal{\tiny{\textbullet}}$ (equivalently, $\boldsymbol{\eta}_{\textnormal{\tiny{\textbullet}}}$) has summable covariances and cross-covariances.  By studying the exact Hermite ranks of the underlying functions, weaker summability conditions would suffice (cf. Theorem \ref{thm:breuer-major2}). However, given the Hermite ranks of the functions $f_j$, it is in general impossible to say anything about the Hermite ranks of transformations such as $f_j^2$ arising from $\hat{\textbf{S}}_0$, see the proof of Theorem~\ref{thm:covconvergence}. On the other hand, assuming Hermite rank equal to one is a very natural assumption in many occasions. For example, the modeling approach given in \cite{viitasaari2017} sets the rank of $f_j$ to equal 1. Moreover, Hermite rank 1 is stable in a sense that even small perturbations in a function with higher Hermite rank leads to rank 1 again. For further discussion on the stability of Hermite rank 1, we refer to \cite{bai2014generalized}.

\subsection{Notes on non-central limit theorems}
\label{sec:noncentral}
In this section, we provide examples where the convergence rate of the unmixing estimator differs from the standard $\sqrt{T}$ and where the limiting distribution is non-Gaussian. Such situations arise, especially, when the convergence summability condition of Theorem \ref{thm:breuer-major2} does not hold.

\begin{ass}
\label{ass:longrange-model}
Let $\textbf{x}_\textnormal{\tiny{\textbullet}} \coloneqq \left(\textbf{x}_t\right)_{t\in \mathbb{N}}$ be a process that satisfies Definition~\ref{model:BSS} with the trivial mixing matrix $\textbf{A} = \textbf{I}_d$ and let $\textbf{X}$ be a $\mathbb{C}^{T \times d}$-valued, $1\leq d < T < +\infty$, sampled stochastic process generated by $\textbf{x}_\textnormal{\tiny{\textbullet}}$ and $\textbf{z}_\textnormal{\tiny{\textbullet}}=  \textbf{b}_\textnormal{\tiny{\textbullet}}+ i \textbf{c}_\textnormal{\tiny{\textbullet}}$.  We assume that there exists centered $\mathbb{R}$-valued stationary Gaussian processes
$\boldsymbol{\eta}_{\textnormal{\tiny{\textbullet}}}^{(k)}$, $k\in\{1,2\ldots,2d\}$, such that, for all $k\in\{1,\ldots d\}$, the component  $\textbf{b}_\textnormal{\tiny{\textbullet}}^{(k)}$ has the same finite-dimensional distributions and, asymptotically, the same autocovariance function as $\tilde{f}_k(\boldsymbol{\eta}^{(k)}_{\textnormal{\tiny{\textbullet}}})$, for some function $\tilde{f}_k : \mathbb{R} \rightarrow \mathbb{R}$. Similarly, we assume that, for all $k\in\{1,\ldots d\}$,  the component  $\textbf{c}_\textnormal{\tiny{\textbullet}}^{(k)}$ has the same finite-dimensional distributions and, asymptotically, the same autocovariance function as $\tilde{f}_{k+d}(\boldsymbol{\eta}^{(k+d)}_{\textnormal{\tiny{\textbullet}}})$, for some function $\tilde{f}_{k+d} : \mathbb{R} \rightarrow \mathbb{R}$. Furthermore, we assume that
$\mathbb{E}[|\tilde{f}_k(\boldsymbol{\eta}^{(k)}_1)|^4] < +\infty$, $\forall k \in \{1,\ldots,2d\}$, and that the Gaussian processes $\boldsymbol{\eta}_{\textnormal{\tiny{\textbullet}}}^{(1)},\boldsymbol{\eta}_{\textnormal{\tiny{\textbullet}}}^{(2)},\ldots,\boldsymbol{\eta}_{\textnormal{\tiny{\textbullet}}}^{(2d)}$ are mutually independent.
\end{ass}

In comparison to Assumption \ref{ass:summablecov}, we here assume that real- and imaginary parts of each component is driven by a single Gaussian process. As discussed in Section \ref{sec:bmajor}, this is not a huge restriction. In addition, we assume independent components and that the real- and imaginary parts of each component are independent. Although, independence is a common assumption in the blind source separation literature, we note that our results can be extended to cover dependencies as long as, for every $j,k\in\{1,2,\ldots,2d\}$, the cross-covariances between $\boldsymbol{\eta}^{(k)}$ and $\boldsymbol{\eta}^{(j)}$ are negligible compared to the decay of the autocovariance functions $r_{\boldsymbol{\eta}^{(j)}}(t)$. 

If the autocovariance functions $r_{\boldsymbol{\eta}^{(j)}}$ satisfy,
\begin{equation}
\label{eq:short-range}
\sum_{t=1}^\infty |r_{\boldsymbol{\eta}^{(j)}}(t)| < +\infty, \quad \forall j\in\{1,\ldots 2d\},
\end{equation}
then we are in the situation of Theorem \ref{thm:covconvergence}. More generally, a weakly stationary series is called short-range dependent if the corresponding autocovariance function $r$ satisfies Equation \eqref{eq:short-range}. Hence, we assume that at least one of the functions $r_{\boldsymbol{\eta}^{(j)}}$ is not summable. For such components, we assume the following long-range dependence condition.
\begin{dfn}
We say that a $\mathbb{R}$-valued weakly stationary process ${\eta}_{\textnormal{\tiny{\textbullet}}}$ is long-range dependent if the autocovariance function $r_{{\eta}}$ satisfies
\begin{equation}
\label{eq:long-range}
\lim_{k\to \infty} k^{2-2H}r_{{\eta}}(k) =C
\end{equation}
for some $H \in \left[1/2,1\right)$ and $C\in (0,\infty)$.
\end{dfn}
Note that, the definition of long-range dependence varies in the literature. For details on long-range dependent processes and their different definitions, we refer to \cite{beran,beran-et-al,sam}. 

There is a large literature on limit theorems under long-range dependence. See, e.g., \cite{Dobrushin-Major-1979,Taqqu-1975,bai-taq2} for a few central works on the topic. In the case of long-range dependent processes, the rate of convergence of the normalized mean is slower than the usual $\sqrt{T}$ and the limiting distribution depends on the corresponding Hermite rank. More precisely, the limiting distribution follows a so-called $q$-Hermite distribution, where $q$ is the Hermite rank of the underlying function. Note that, $q$-Hermite distributions can be fully characterized with the corresponding Hermite rank $q$ and a self-similarity parameter $H$, i.e., the Hurst index. In the stable case $q=1$, we obtain a Gaussian limit, and in the case $q=2$ we obtain the so-called \emph{Rosenblatt distribution} that is not Gaussian. Similarly, for $q\geq 3$ the limiting distribution is not Gaussian. For details on Hermite distributions and processes, we refer to \cite{Embrechts-Maejima-2002,tudor, bai-taq2}. In particular, we apply the following known result (see \cite[Theorem 1]{Dobrushin-Major-1979}).
\begin{pro}
\label{pro:longrange-limit-general}
Let ${\eta}_{\textnormal{\tiny{\textbullet}}}$ be a $\mathbb{R}$-valued stationary Gaussian process with autocovariance function $r_{\eta}$ such that $r_{\eta}(0)=1$ and $r_{\eta}$ satisfies Equation \eqref{eq:long-range} for some $H\in\left[1/2,1\right)$. Let $f$ be a function such that $\mathbb{E}[(f({\eta}_1))^2] < +\infty$ and the Hermite rank of $f$ equals $q$. If
$
q(2H-2)>-1,
$
then, for some constant $C = C_{f,\eta} >0$, we have
$$
T^{q(1-H)-1} \sum_{t=1}^T \left(f({\eta}_t) - \mathbb{E}[f({\eta}_t)]\right) \xrightarrow[T\rightarrow \infty]{\mathcal{D}}  C Z_q,
$$
where $Z_q$ follows a $q$-Hermite distribution.
\end{pro}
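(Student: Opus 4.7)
The plan is to reduce the problem to a non-central limit theorem for a single Hermite term via the orthogonal Hermite expansion. Since $\{H_k/\sqrt{k!} : k\in\{0\}\cup\mathbb{N}\}$ forms an orthonormal basis of $\mathcal{L}^2(\mathbb{R},P_\eta)$ and $f$ has Hermite rank $q$, I would write
\begin{align*}
f(\eta_t) - \mathbb{E}[f(\eta_t)] = \sum_{k=q}^\infty a_k H_k(\eta_t), \qquad a_q \neq 0.
\end{align*}
Setting $S_T^{(k)} = \sum_{t=1}^T H_k(\eta_t)$, the sum then splits as $\sum_{k=q}^\infty a_k S_T^{(k)}$, and the goal becomes showing that only the term $k=q$ survives after normalization by $T^{q(1-H)-1}$.

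The first step would be to compute variances. By the standard Hermite orthogonality identity $\mathbb{E}[H_k(X)H_k(Y)] = k!\,\rho^k$ for a centered Gaussian pair $(X,Y)$ with $\operatorname{Cov}(X,Y)=\rho$, one obtains
\begin{align*}
\operatorname{Var}(S_T^{(k)}) = k!\sum_{s,t=1}^T r_{\eta}(s-t)^k.
\end{align*}
Invoking the long-range asymptotic $r_\eta(k)\sim Ck^{2H-2}$, a routine Riemann-sum argument gives $\operatorname{Var}(S_T^{(k)}) \asymp T^{2-k(2-2H)}$ whenever $k(2-2H)<1$, and $\operatorname{Var}(S_T^{(k)}) = O(T)$ otherwise (with a possible $\log T$ correction on the boundary). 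For $k=q$, the assumption $q(2H-2)>-1$ places us in the first regime, yielding $\operatorname{Var}(S_T^{(q)}) \asymp T^{2-q(2-2H)} = T^{2-2q(1-H)}$, which is exactly the order cancelled by the normalization. For $k>q$, the exponent $2-k(2-2H)$ is strictly smaller than $2-q(2-2H)$ (or the variance is only $O(T)$), so $T^{q(1-H)-1} a_k S_T^{(k)}\to 0$ in $L^2$. A standard $\varepsilon/3$ truncation together with Slutsky's Lemma then reduces matters to the single-term statement
\begin{align*}
T^{q(1-H)-1} a_q S_T^{(q)} \xrightarrow[T\to\infty]{\mathcal{D}} C_{f,\eta}\,Z_q.
\end{align*}

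The main obstacle, and the heart of the argument, is this last distributional convergence for a pure $q$-th Hermite functional under long-range dependence. The classical route (Dobrushin--Major, Taqqu) is the spectral representation: write the Gaussian process as $\eta_t = \int_{\mathbb{R}} e^{it\lambda}\,\hat{B}(d\lambda)$ with respect to a complex Gaussian random measure whose control measure inherits the singular behavior at the origin dictated by $r_\eta(k)\sim C k^{2H-2}$. Then $H_q(\eta_t)$ admits a representation as a multiple Wiener-Itô integral of order $q$ against $\hat{B}^{\otimes q}$, the partial sum $S_T^{(q)}$ becomes a Wiener chaos integral with a kernel that, after rescaling time by $T$ and frequency by $1/T$, converges (in $L^2$ of the symmetric Fock space) to the kernel defining the $q$-Hermite process at time $1$. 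Since convergence in $L^2$ of kernels inside a fixed chaos implies convergence in distribution of the associated multiple integrals, the limit is precisely the $q$-Hermite random variable $Z_q$, with an explicit constant absorbing $a_q$, $C$, and the combinatorial factor $q!$. The delicate point is the regularity and uniform integrability of the kernels near the singularity at the origin, which is precisely where the condition $q(2H-2)>-1$ is needed to guarantee $L^2$-integrability of the limiting kernel. Everything else reduces to bookkeeping of constants and invoking the cited result of Dobrushin--Major.
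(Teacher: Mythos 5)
The paper does not prove this proposition at all: it is imported verbatim as a known result, with an explicit pointer to Theorem 1 of Dobrushin and Major (1979), and no argument appears in Appendix A. What you have written is therefore not an alternative to the paper's proof but a reconstruction of the cited theorem's proof, and as a reconstruction it follows the classical route faithfully: expand $f(\eta_t)-\mathbb{E}[f(\eta_t)]=\sum_{k\geq q}a_kH_k(\eta_t)$, use $\mathbb{E}[H_k(X)H_k(Y)]=k!\rho^k$ to show the normalized variance of each term with $k>q$ vanishes (this is Taqqu's reduction theorem), and then prove the single-chaos convergence $T^{q(1-H)-1}S_T^{(q)}\to cZ_q$ via the spectral representation and $L^2$-convergence of kernels inside a fixed Wiener chaos. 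Your variance exponents are correct, and the condition $q(2H-2)>-1$ enters exactly where you say it does. The one step that deserves more care is the passage from termwise $L^2$-negligibility of $a_kS_T^{(k)}$ to negligibility of the infinite tail $\sum_{k>q}a_kS_T^{(k)}$: an $\varepsilon/3$ truncation needs a bound on the remainder that is uniform in $T$, which does not follow from termwise convergence alone. The standard fix is to use $|r_\eta|\leq 1$ to write, for all $k\geq q+1$,
\begin{align*}
\operatorname{Var}\Bigl(\sum_{k>K}a_kS_T^{(k)}\Bigr)\leq \Bigl(\sum_{k>K}a_k^2k!\Bigr)\sum_{s,t=1}^T\bigl|r_\eta(s-t)\bigr|^{q+1},
\end{align*}
where the second factor times $T^{2q(1-H)-2}$ stays bounded (by the same exponent computation you performed) and the first factor tends to zero as $K\to\infty$ because $\sum_k a_k^2k!=\operatorname{Var}(f(\eta_1))<\infty$. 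With that estimate supplied, your outline is a correct proof of the proposition; it simply does more work than the paper, which delegates everything to the reference.
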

In view of Proposition \ref{pro:longrange-limit-general}, in the long-range dependent case the Hermite rank plays a crucial role. Thus, one cannot pose general results without any a priori knowledge on the ranks. Indeed,
it can be shown --- see, e.g., \cite[Equation 4.26]{beran-et-al} with $d=H-1/2$ --- that if $f$ has Hermite rank $q$, and $r$ satisfies Equation \eqref{eq:long-range} with some $H$ such that $q(2H-2)>-1$, then
\begin{equation}
\label{eq:long-range-variance}
\lim_{T\to \infty}T^{q(2-2H)}\mathbb{E}\left[\left(\frac{1}{T}\sum_{t=1}^T \left(f(\boldsymbol{\eta}_t) - \mathbb{E}[f(\boldsymbol{\eta}_t)]\right)\right)^2\right] =C.
\end{equation}
This justifies the normalization in Proposition \ref{pro:longrange-limit-general} and also gives the constant explicitly, as $\mathbb{E}[Z_q] = 0$ and $\mathbb{E}[(Z_q)^2] = 1$. We pose the following assumption for the autocovariances $r_{\boldsymbol{\eta}^{(j)}}$ and Hermite ranks.
\begin{ass}
\label{ass:longrange-ranks}
We assume that for every $j \in \{1,2,\ldots,2d\}$ the autocovariance functions $r_{\boldsymbol{\eta}^{(j)}}$ with $r_{\boldsymbol{\eta}^{(j)}}(0)=1$   satisfy either Equation \eqref{eq:short-range} or Equation \eqref{eq:long-range}, and that Equation \eqref{eq:long-range} is satisfied for at least one index. Let $I\subset \{1,2,\ldots,2d\}$ denote the set of indices for which Equation \eqref{eq:long-range} holds, and for every $i\in I$, denote the Hermite rank of $x\mapsto \tilde{f}_i(x)$ by $q_{1,i}$ and the Hermite rank of $x\mapsto \left[\tilde{f}_i(x)-\mathbb{E}(\tilde{f}_i(\boldsymbol{\eta}^{(i)}))\right]^2$ by $q_{2,i}$. For any indices $j,k\in I$, $j\neq k$, we assume that 
\begin{equation}
\label{eq:maximum}
\max_{i\in I} q_{2,i}(2H_{i}-2) > \max_{j,k\in I}\left\{ q_{1,k}(2H_{k}-2)+ q_{1,j}(2H_{j}-2), -1\right\},
\end{equation}
and that for any $k \in I$
\begin{equation}
\label{eq:maximum2}
\max_{i\in I} q_{2,i}(2H_{i}-2) \geq q_{1,k}(4H_{k}-4).
\end{equation}
\end{ass}
We stress that Assumption \ref{ass:longrange-ranks} is not very restrictive. We allow a combination of short- and long-range processes $\boldsymbol{\eta}^{(j)}_{\textnormal{\tiny{\textbullet}}}$ and we assume that at least one of the processes is long-range dependent, since otherwise we may apply Theorem \ref{thm:covconvergence}. In this respect, Condition \eqref{eq:maximum} guarantees that at least one of the $[\tilde{f}_i(x)-\mathbb{E}(\tilde{f}_i(\boldsymbol{\eta}^{(i)}))]^2$  is long-range dependent. Indeed, if $\max_{i\in I}q_i(2H_i-2) < -1$, then Theorem \ref{thm:covconvergence} applies again. Moreover,  one could also study the limiting case $\max_{i\in I}q_i(2H_i-2) = -1$. Then, one usually obtains a Gaussian limit, but with a rate given by $\sqrt{{T}/{\log(T)}}$. Our more restrictive Assumptions \eqref{eq:maximum} and \eqref{eq:maximum2} are posed in order to apply known results on limit theorems in the long-range dependent case. Indeed, if these conditions were violated, then we would need to study the convergence of complicated dependent random vectors towards some combinations of Hermite distributions. Unfortunately, to the best of our knowledge, there are no studies in this topic that would fit into our general setting (on some results in special cases, see e.g. \cite{bai-taq2}). 

In view of Proposition \ref{pro:longrange-limit-general}, the rate of convergence, $T^{\gamma}$, of our unmixing estimator arises from Equation \eqref{eq:maximum}. We set 
\begin{equation}
\label{eq:rate-gamma}
\gamma := -\frac12 \max_{i\in I}q_{2,i}(2H_i-2).
\end{equation}
By Equation \eqref{eq:maximum}, we have that $\gamma<\frac12$, meaning that our rate is slower than the usual $\sqrt{T}$.

The following results show that cross-terms do not contribute to the limit.
\begin{pro}
\label{pro:cross-disappear}
Let Assumptions \ref{ass:longrange-model} and \ref{ass:longrange-ranks} be satisfied. Then for every $j\neq k$, we have that,
\begin{equation}
\label{eq:cross-disappear1}
T^{\gamma-1}\sum_{t=1}^T \left( \tilde{f}_k(\boldsymbol{\eta}^{(k)}_t)-\mathbb{E}[\tilde{f}_k(\boldsymbol{\eta}^{(k)}_t)] \right)\left( \tilde{f}_j(\boldsymbol{\eta}^{(j)}_t)-\mathbb{E}[\tilde{f}_j(\boldsymbol{\eta}^{(j)}_t)] \right)\xrightarrow[T\rightarrow \infty]{\mathcal{L}_2} 0
\end{equation}
and
\begin{equation}
\label{eq:cross-disappear2}
T^{\gamma-1}\sum_{t=1}^T \left( \tilde{f}_k(\boldsymbol{\eta}^{(k)}_t)-\mathbb{E}[\tilde{f}_k(\boldsymbol{\eta}^{(k)}_t)] \right)\left( \tilde{f}_j(\boldsymbol{\eta}^{(j)}_{t+\tau})-\mathbb{E}[\tilde{f}_j(\boldsymbol{\eta}^{(j)}_{t})] \right)\xrightarrow[T\rightarrow \infty]{\mathcal{L}_2} 0
\end{equation}
where $  \xrightarrow[ ]{\mathcal{L}_2}$ denotes convergence in the space $\mathcal{L}_2(\Omega,\mathbb{P})$.  Consequently, the convergence also holds in probability.
\end{pro}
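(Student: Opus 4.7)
The plan is to compute the $\mathcal{L}_2$ norm of each expression explicitly using independence of the driving Gaussian processes, and then to reduce the problem to a sum of products of autocovariances that one controls via the Hermite-rank asymptotics behind Assumption~\ref{ass:longrange-ranks}. Fix $j \neq k$ and write $g_k(t) := \tilde{f}_k(\boldsymbol{\eta}^{(k)}_t) - \mathbb{E}[\tilde{f}_k(\boldsymbol{\eta}^{(k)}_t)]$ and likewise $g_j$. I treat the two statements simultaneously by considering a generic lag shift $\tau \in \{0\}\cup \mathbb{N}$ applied to the $j$-factor.

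First I would expand
\begin{align*}
\mathbb{E}\!\left[\Bigl(T^{\gamma-1}\!\sum_{t=1}^T g_k(t)g_j(t+\tau)\Bigr)^{\!2}\right]
= T^{2\gamma-2}\!\!\sum_{t,s=1}^T \mathbb{E}[g_k(t)g_k(s)]\,\mathbb{E}[g_j(t+\tau)g_j(s+\tau)],
\end{align*}
where the factorization follows from the mutual independence of $\boldsymbol{\eta}^{(k)}$ and $\boldsymbol{\eta}^{(j)}$ in Assumption~\ref{ass:longrange-model}. Denoting the stationary autocovariances by $R_k(\ell)=\mathbb{E}[g_k(0)g_k(\ell)]$ and $R_j(\ell)$ analogously, the double sum collapses to $\sum_{|\ell|<T}(T-|\ell|)R_k(\ell)R_j(\ell)$. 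Hence it suffices to prove that
\begin{align*}
T^{2\gamma-1}\sum_{|\ell|<T} |R_k(\ell)R_j(\ell)| \;\longrightarrow\; 0.
\end{align*}

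Next I would invoke the Hermite expansion. Since $g_k$ has Hermite rank $q_{1,k}$ with respect to $\boldsymbol{\eta}^{(k)}$, the diagram formula gives $R_k(\ell)=\sum_{m\geq q_{1,k}}m!(a^{(k)}_m)^2 r_{\boldsymbol{\eta}^{(k)}}(\ell)^m$, so that $R_k$ is summable whenever $k\notin I$ and satisfies $|R_k(\ell)| \leq C_k(1+|\ell|)^{q_{1,k}(2H_k-2)}$ whenever $k\in I$ (the leading term dominates by \eqref{eq:long-range}). I would then split into three cases. If $\{j,k\}\cap I = \emptyset$, both autocovariances are summable, and the remaining factor $T^{2\gamma-1}$ tends to zero since $2\gamma-1<0$ by \eqref{eq:maximum}. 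If exactly one index, say $k$, lies outside $I$, then $R_j$ is bounded and $R_k$ summable, so $\sum_\ell |R_k R_j|<\infty$ and again $T^{2\gamma-1}\to 0$ suffices. If both $j,k\in I$, I bound
\begin{align*}
\sum_{|\ell|<T} |R_k(\ell)R_j(\ell)| \;\leq\; C\sum_{|\ell|<T} (1+|\ell|)^{q_{1,k}(2H_k-2)+q_{1,j}(2H_j-2)},
\end{align*}
which is either $O(1)$, $O(\log T)$, or $O(T^{1+q_{1,k}(2H_k-2)+q_{1,j}(2H_j-2)})$ according to the sign of the exponent.

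Finally, invoking \eqref{eq:maximum} with the definition $2\gamma = -\max_{i\in I}q_{2,i}(2H_i-2)$ gives both $2\gamma-1<0$ and
\begin{align*}
2\gamma + q_{1,k}(2H_k-2)+q_{1,j}(2H_j-2) \;<\; 0,
\end{align*}
so every sub-case yields $T^{2\gamma-1}\sum_{|\ell|<T}|R_k R_j| \to 0$. The mild obstacle is to handle the bookkeeping of the three cases uniformly and, in the long-range sub-case, to obtain the quantitative decay $|R_k(\ell)|\lesssim |\ell|^{q_{1,k}(2H_k-2)}$ from the Hermite expansion and \eqref{eq:long-range}; once this is in hand, Assumption~\ref{ass:longrange-ranks} delivers the result, and convergence in probability is an automatic consequence of $\mathcal{L}_2$ convergence.
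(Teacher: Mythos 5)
Your proposal is correct and follows essentially the same route as the paper's proof: factorize the second moment via independence of $\boldsymbol{\eta}^{(k)}$ and $\boldsymbol{\eta}^{(j)}$, reduce the double sum to a single sum over lags by stationarity, bound the autocovariances by $|\ell|^{q_{1,\cdot}(2H_\cdot-2)}$ via the Hermite expansion, and split into the summable, logarithmic, and polynomial cases, all of which are killed by Condition \eqref{eq:maximum}. The only cosmetic difference is that you organize the case analysis by membership in $I$ up front, whereas the paper first disposes of the summable case and then treats the exponent-sum dichotomy; the substance is identical.
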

In the presence of long-range dependency, the mean estimator  $\hat{\boldsymbol{\mu}}[\textbf{X}]$ in the definition of $\tilde{\textbf{S}}_{\tau} \left[ \textbf{X} \right]$ has to be studied separately as it may contribute to the limit. 
\begin{lma}
\label{lma:mu-conv}
Let Assumptions \ref{ass:longrange-model} and \ref{ass:longrange-ranks} be satisfied. Let $\hat{\boldsymbol{\mu}} \coloneqq \hat{\boldsymbol{\mu}}[\textbf{X}]$ and $\tilde{\boldsymbol{\mu}} =  \hat{\boldsymbol{\mu}}-\boldsymbol{\mu}$.
Then for every element $(j,k)$, such that $j,k\in\{1,2,\ldots,d\}$ and $j\neq k$, we have
$
[T^{\gamma}\tilde{\boldsymbol{\mu}}\tilde{\boldsymbol{\mu}}^{\textnormal{H}}]_{jk}  \xrightarrow[T\rightarrow \infty]{\mathcal{L}_1} 0,
$
where $  \xrightarrow[ ]{\mathcal{L}_1}$ denotes convergence in the space $\mathcal{L}_1(\Omega,\mathbb{P})$. Consequently, the convergence also holds in probability.
\end{lma}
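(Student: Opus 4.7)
The plan is to apply the Cauchy--Schwarz inequality entrywise, thereby reducing the off-diagonal $\mathcal{L}_1$ bound to bounds on the marginal second moments $\mathbb{E}[|\tilde{\boldsymbol{\mu}}_j|^2]$. These are then controlled by the variance rates coming either from the usual short-range bound $\mathcal{O}(1/T)$ or from Equation \eqref{eq:long-range-variance} for long-range indices, and the conclusion follows from the strict gap built into condition \eqref{eq:maximum}.

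Under trivial mixing and the centering $\mathbb{E}[\textbf{z}_t]=\textbf{0}$, we have $\tilde{\boldsymbol{\mu}} = \tfrac{1}{T}\sum_{t=1}^{T}\textbf{z}_t$, whose $j$th entry is $\tfrac{1}{T}\sum_t b_t^{(j)} + i\tfrac{1}{T}\sum_t c_t^{(j)}$. Since $|a + ib|^2 = a^2 + b^2$, the cross-terms between real and imaginary parts drop out and
\begin{align*}
\mathbb{E}\bigl[|\tilde{\boldsymbol{\mu}}_j|^2\bigr] = \textnormal{Var}\Bigl(\tfrac{1}{T}\sum_{t=1}^T b_t^{(j)}\Bigr) + \textnormal{Var}\Bigl(\tfrac{1}{T}\sum_{t=1}^T c_t^{(j)}\Bigr).
\end{align*}
By Assumption \ref{ass:longrange-model}, the autocovariance of $b_\textnormal{\tiny{\textbullet}}^{(j)}$ asymptotically matches that of $\tilde{f}_j(\boldsymbol{\eta}^{(j)}_\textnormal{\tiny{\textbullet}})$, and similarly for $c_\textnormal{\tiny{\textbullet}}^{(j)}$ against $\tilde{f}_{j+d}(\boldsymbol{\eta}^{(j+d)}_\textnormal{\tiny{\textbullet}})$. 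Writing $r_i := q_{1,i}(2H_i-2)$ when $i\in I$ and $r_i := -1$ otherwise, and $\rho_j := \max\{r_j, r_{j+d}\}$, Equation \eqref{eq:long-range-variance} together with the standard short-range bound yields $\mathbb{E}[|\tilde{\boldsymbol{\mu}}_j|^2]\leq C\,T^{\rho_j}$ for all $T$ sufficiently large. Cauchy--Schwarz then gives
\begin{align*}
\mathbb{E}\bigl[\bigl|T^{\gamma}\tilde{\boldsymbol{\mu}}_j\tilde{\boldsymbol{\mu}}_k^{*}\bigr|\bigr] \leq T^{\gamma}\sqrt{\mathbb{E}[|\tilde{\boldsymbol{\mu}}_j|^2]\,\mathbb{E}[|\tilde{\boldsymbol{\mu}}_k|^2]} \leq C\,T^{\gamma + (\rho_j+\rho_k)/2}.
\end{align*}

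It remains to show $\rho_j+\rho_k<-2\gamma$, where $-2\gamma=\max_{i\in I}q_{2,i}(2H_i-2)$ by Equation \eqref{eq:rate-gamma}. If both $\rho_j,\rho_k$ are of the form $q_{1,i}(2H_i-2)$ with $i\in I$, this is exactly condition \eqref{eq:maximum}. If $\rho_j=\rho_k=-1$, then $\rho_j+\rho_k=-2<-2\gamma$, since \eqref{eq:maximum} also forces $-2\gamma>-1$. In the mixed case, the long-range contribution $q_{1,i}(2H_i-2)$ is strictly negative because $H_i<1$ and $q_{1,i}\geq 1$, so $\rho_j+\rho_k<-1<-2\gamma$. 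Thus $T^{\gamma+(\rho_j+\rho_k)/2}\to 0$, delivering the claimed $\mathcal{L}_1$-convergence, and convergence in probability then follows by Markov's inequality. The main obstacle is the bookkeeping across these three cases and checking that the strict inequality in condition \eqref{eq:maximum} survives uniformly; a secondary subtlety is that Assumption \ref{ass:longrange-model} supplies only asymptotic autocovariance matching, but since the normalised partial-sum variance is governed by the autocovariance tail, the lower-order difference is negligible in $T$.
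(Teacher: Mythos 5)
Your proposal is correct and follows essentially the same route as the paper: Cauchy--Schwarz on the entries of $\tilde{\boldsymbol{\mu}}\tilde{\boldsymbol{\mu}}^{\textnormal{H}}$, second-moment bounds of order $T^{q_{1,i}(2H_i-2)}$ (resp.\ $T^{-1}$) from Equation \eqref{eq:long-range-variance} and summability, and the strict gap in condition \eqref{eq:maximum} via \eqref{eq:gamma-bound} to kill the $T^{\gamma}$ factor. The only nitpick is that for $i\in I$ with $q_{1,i}(2H_i-2)\leq -1$ the correct variance rate is $T^{-1}$ (with a $\log T$ at the boundary) rather than $T^{q_{1,i}(2H_i-2)}$, so $r_i$ should be capped at $-1$; your case analysis still closes since $-2\gamma>-1$, and the paper defers these cases in exactly the same way.
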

In order to obtain the limiting distribution for
$T^\gamma(\xhatt{\textbf{J}}\hat{\boldsymbol{\Gamma}} - \textbf{I}_d)$, we introduce some notation. For each $k\in\{1,2,\ldots,2d\}$ we define Hermite processes $Z_{q_{1,k}}$ and $Z_{q_{2,k}}$, such that for a given $k$, we allow $Z_{q_{1,k}}$ and $Z_{q_{2,k}}$ to be dependent on each other, while both are independent of $Z_{q_{1,j}}, Z_{q_{2,j}}$ for $j\neq k$. We also introduce constants $C_{1,k},C_{2,k}$ in the following way: if $k \in I$ is such that equality holds in \eqref{eq:maximum2}, i.e., $\max_{i\in I}q_{2,i}(2H_i-2) = 2[q_{1,k}(2H_k-2)]$, then we let $C_{1,k}$ to be the constant given in Proposition \ref{pro:longrange-limit-general} associated to the limit
$$
T^{q_{1,k}(1-H_k)-1} \sum_{t=1}^T \left(\tilde{f}_k(\boldsymbol{\eta}^{(k)}_t) - \mathbb{E}[\tilde{f}_k(\boldsymbol{\eta}^{(k)}_t)]\right) \xrightarrow[T\rightarrow \infty]{\mathcal{D}}  C_{1,k}Z_{q_{1,k}}.
$$
Otherwise we set $C_{1,k}=0$. Similarly, if $k\in I$ is such that
\begin{equation}
\label{eq:needed-for-contribute}
q_{2,k}(2H_k-2) = \max_{i\in I}q_{2,i}(2H_i-2),
\end{equation}
we let $C_{2,k}$ to be the constant given in Proposition \ref{pro:longrange-limit-general} associated to the limit
$$
T^{q_{2,k}(1-H_k)-1}\sum_{t=1}^T \left( h_{k}(\boldsymbol{\eta}^{(k)}_t) - \mathbb{E}\left[ h_{k}(\boldsymbol{\eta}^{(k)}_t)  \right]\right) \xrightarrow[T\rightarrow \infty]{\mathcal{D}}  C_{2,k}Z_{q_{2,k}},
$$
where
$$
h_{k}(\boldsymbol{\eta}^{(k)}_t) =   \left(\tilde{f}_k(\boldsymbol{\eta}^{(k)}_t) - \mathbb{E}[\tilde{f}_k(\boldsymbol{\eta}^{(k)}_t)]\right)^2.
$$
Otherwise we set $C_{2,k} = 0$. The following theorem is our main result of this subsection.
\begin{thm}
\label{thm:limit-longrange}
Let Assumptions \ref{ass:longrange-model} and \ref{ass:longrange-ranks} hold. Then,
\begin{align*}
T^{\gamma} \left(\xhatt{\textbf{J}}\hat{\boldsymbol{\Gamma}} - \textbf{I}_d\right) \xrightarrow[T\rightarrow \infty]{\mathcal{D}} \frac{1}{2}\boldsymbol{\Upsilon},
\end{align*}
where $\boldsymbol{\Upsilon}$ is a $  \mathbb{R}^{d\times d}$-valued diagonal matrix with elements given by 
$$
(\boldsymbol{\Upsilon})_{jj} \sim C_{2,j}Z_{q_{2,j}} + C_{1,j}^2Z^2_{q_{1,j}} + C_{2,j+d}Z_{q_{2,j+d}} + C_{1,j+d}^2Z^2_{q_{1,j+d}}.
$$
\end{thm}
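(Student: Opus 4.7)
The plan is to apply Theorem~\ref{thm:limiting} with common rate $\gamma_T = T^{\gamma}$ (the $\gamma$ of Equation~\eqref{eq:rate-gamma}), thereby reducing the asymptotics of $\xhatt{\textbf{J}}\hat{\boldsymbol{\Gamma}} - \textbf{I}_d$ to the entries of $\hat{\textbf{S}}_0 - \textbf{I}_d$ and $\hat{\textbf{S}}_\tau - \boldsymbol{\Lambda}_\tau$. The rate $T^{\gamma}$ is dictated by the squared centered functions $h_k(x) = (\tilde{f}_k(x)-\mathbb{E}[\tilde{f}_k(\boldsymbol{\eta}_1^{(k)})])^2$, of Hermite rank $q_{2,k}$, which appear on the diagonal of $\hat{\textbf{S}}_0$. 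Condition~\eqref{eq:maximum} ensures that these are the slowest vanishing contributions and that every off-diagonal quantity disappears even at this comparatively slow rate, while Condition~\eqref{eq:maximum2} fixes the window within which the squared sample-mean terms are of exactly the same order as the main contribution.

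The first step is to verify that $T^{\gamma}(\hat{\textbf{S}}_0 - \textbf{I}_d) = \mathcal{O}_p(1)$ and $T^{\gamma}(\hat{\textbf{S}}_\tau - \boldsymbol{\Lambda}_\tau) = \mathcal{O}_p(1)$, so that Theorem~\ref{thm:limiting} applies with $\gamma_T = T^{\gamma}$. Diagonal entries are handled by Equation~\eqref{eq:long-range-variance} and Markov's inequality (with Lemma~\ref{lma:mu-conv} absorbing the sample-mean corrections), while off-diagonal entries are controlled by Proposition~\ref{pro:cross-disappear}. With this linearisation in hand, I would next show that every off-diagonal element of $\xhatt{\textbf{J}}\hat{\boldsymbol{\Gamma}} - \textbf{I}_d$ tends to zero in probability, forcing $\boldsymbol{\Upsilon}$ to be diagonal. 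Expanding $[\hat{\textbf{S}}_0]_{jk}$ and $[\hat{\textbf{S}}_\tau]_{jk}$ for $j\neq k$ into bilinear combinations of $b^{(j)}_\textnormal{\tiny{\textbullet}}, c^{(j)}_\textnormal{\tiny{\textbullet}}, b^{(k)}_\textnormal{\tiny{\textbullet}}, c^{(k)}_\textnormal{\tiny{\textbullet}}$ and using the mutual independence of the driving processes from Assumption~\ref{ass:longrange-model}, every centred cross-sum is killed at the rate $T^{\gamma-1}$ by Proposition~\ref{pro:cross-disappear}, while the residual quadratic sample-mean contributions are killed by Lemma~\ref{lma:mu-conv}.

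For the diagonal entries, writing $z_t^{(j)} = b_t^{(j)} + i c_t^{(j)}$, the identity $[\hat{\textbf{S}}_0]_{jj} = (T-1)^{-1}\sum_t |z_t^{(j)} - \hat{\mu}^{(j)}|^2$ together with $\sigma_j^2 + \sigma_{j+d}^2 = 1$, where $\sigma_k^2 = \mathbb{E}[h_k(\boldsymbol{\eta}_1^{(k)})]$ by condition~(2) of Definition~\ref{model:BSS}, yields the in-distribution decomposition
\begin{align*}
[\hat{\textbf{S}}_0]_{jj} - 1 \;\stackrel{d}{=}\; \frac{1}{T}\sum_{t=1}^T \!\left(h_j(\boldsymbol{\eta}^{(j)}_t) - \sigma_j^2\right) + \frac{1}{T}\sum_{t=1}^T \!\left(h_{j+d}(\boldsymbol{\eta}^{(j+d)}_t) - \sigma_{j+d}^2\right) \pm |\hat{\mu}^{(j)}|^2 + O(1/T).
\end{align*}
Proposition~\ref{pro:longrange-limit-general} applied to each centred squared-function sum produces, at rate $T^{\gamma}$, the contributions $C_{2,j}Z_{q_{2,j}}$ and $C_{2,j+d}Z_{q_{2,j+d}}$ precisely when the corresponding index realises the maximum in Equation~\eqref{eq:maximum}, and $o_p(1)$ otherwise (this matches the convention $C_{2,k}=0$ in the statement). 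The same proposition applied to $\hat{\mu}_b^{(j)}$ and $\hat{\mu}_c^{(j)}$ gives $T^{\gamma}(\hat{\mu}_b^{(j)})^2 \to C_{1,j}^2 Z_{q_{1,j}}^2$ (and likewise for the imaginary part) exactly when equality holds in Equation~\eqref{eq:maximum2}, and $o_p(1)$ otherwise. The continuous mapping theorem then assembles the four contributions into $(\boldsymbol{\Upsilon})_{jj}$, and Theorem~\ref{thm:limiting}'s $\tfrac{1}{2}$ factor produces the prefactor in the statement.

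The principal technical obstacle is the joint convergence of the four contributions within each diagonal entry and across all $j$. Across distinct $k \in \{1,\ldots,2d\}$ the driving Gaussian processes $\boldsymbol{\eta}^{(k)}_\textnormal{\tiny{\textbullet}}$ are mutually independent by Assumption~\ref{ass:longrange-model}, so the pairs $(Z_{q_{1,k}}, Z_{q_{2,k}})$ are independent across $k$; within a single $k$, however, both variables are functionals of the same Gaussian process and their joint law is simply declared in the statement. Combining the marginal Hermite limits through the continuous mapping theorem and absorbing the $o_p(1)$ remainders from Theorem~\ref{thm:limiting} via Slutsky's theorem then yields the claim.
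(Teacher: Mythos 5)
Your proposal follows the same route as the paper: linearise via Theorem \ref{thm:limiting} with $\gamma_T = T^{\gamma}$, kill the off-diagonal entries with Proposition \ref{pro:cross-disappear} and Lemma \ref{lma:mu-conv}, and reduce each diagonal entry to a sum of two centred squared-function averages plus the squared sample means of the real and imaginary parts. Up to that point the argument is sound and matches the paper's decomposition essentially term by term.

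There is, however, a genuine gap at the step you yourself flag as ``the principal technical obstacle.'' For a fixed $j$, the two quantities
\begin{align*}
T^{\gamma-1}\sum_{t=1}^T\left( (f_{j}(\textbf{y}_t))^2 - \mathbb{E}\left[ (f_{j}(\textbf{y}_t))^2  \right]\right)
\quad\textnormal{ and }\quad
T^{\frac{\gamma}{2}-1}\sum_{t=1}^T f_j(\textbf{y}_t)
\end{align*}
are functionals of the \emph{same} long-range dependent Gaussian process, and Proposition \ref{pro:longrange-limit-general} only gives the marginal convergence of each one to $C_{2,j}Z_{q_{2,j}}$ and $C_{1,j}Z_{q_{1,j}}$ respectively. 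To pass to the limit of the diagonal entry, which is the image of this pair under $(x,y)\mapsto x+y^2$, you need \emph{joint} convergence of the two-dimensional vector; neither marginal convergence nor the continuous mapping theorem delivers that, and ``their joint law is simply declared in the statement'' is a description of the limit object, not a proof that the pre-limit pair converges to it. The paper closes exactly this gap by invoking the multivariate non-central limit theorem of \cite[Theorem 4]{bai-taq2}, which gives the joint convergence of vectors of Hermite-rank functionals of a common long-range dependent Gaussian sequence (this is where Assumptions \eqref{eq:maximum} and \eqref{eq:maximum2} earn their keep). Without citing such a result, or proving the bivariate convergence directly, the final assembly of $(\boldsymbol{\Upsilon})_{jj}$ does not go through. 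The independence across distinct $k$, which you use to combine the four contributions, is fine once the within-$k$ joint convergence is secured.
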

We remark that it might be that most of the elements vanish. Indeed, the coefficient $C_{2,j}$ ($C_{2,j+d}$, respectively) is non-zero only if the real part (imaginary part, respectively) of $\left[f_j(x)-\mathbb{E}(f_j(\boldsymbol{\eta}))\right]^2$ converges with the correct rate $T^{\gamma}$. In this case, $C_{1,j}$ ($C_{1,j+d}$, respectively) is non-zero only if the corresponding element in $\tilde{\boldsymbol{\mu}}$ converges with the rate $T^{\gamma/2}$.

\section{Image source separation}
\label{sec:image}

In this section, we present an image source separation example, where complex-valued signals are utilized. The restoration of images, which have been mixed together by some transformation, is a classical example in blind source separation (BSS), see e.g. \cite{nordhausen2008tools}. In the literature, the images source separation is usually only considered for black and white  images. In the black and white case, we can choose some interval, for example [0,1], and assign every pixel a value belonging to the interval, e.g., such that the pixels with value 1 are black and pixels with value 0 are white. Different shades of gray can then be generated by considering values between 0 and 1. Since there is only a single color parameter associated with a single pixel, the black and white image source problem can be straightforwardly formulated as a real-valued BSS problem.

In our example, we consider colored photographs and conduct the example using the statistical software R (\cite{Rbase}). We start with three equally sized photographs containing $960\times 720$ pixels and import them to R using the package JPEG, \cite{Rjpeg}. The imported images are stored in a $960\times 720 \times 3$ array format, such that a red-green-blue color parameter triplet, denoted as $(R,G,B)$, is assigned to every pixel. The $R,G$ and $B$ parameters are discrete, such that they can take 256 distinct values between zero and one.

\begin{figure}
  \centering
    \includegraphics[width=1\textwidth]{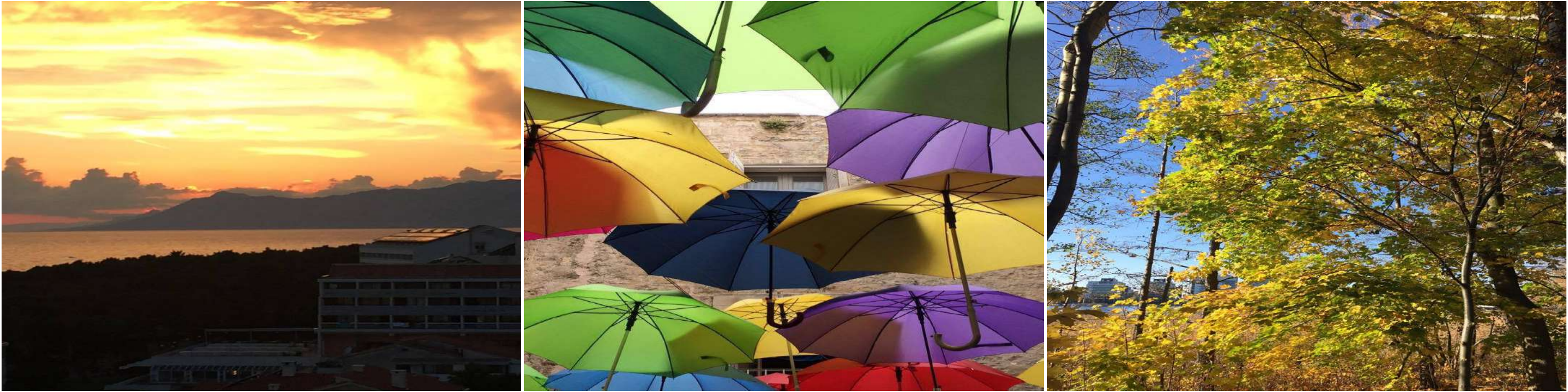}
  \caption{Original color corrected images.}
\label{fig:original}
\end{figure}

Note that the color parameters $R,G$ and $B$ form a discrete color cube. As a preliminary step, we transform the original images such that the color parameter triplet of every pixel is projected to the closest point on the surface of the color cube. Hereby, we have for every pixel that either $\min(R,G,B) = 0$ or $\max(R,G,B) = 1$, without excluding the possibility of them both being simultaneously true. The color corrected images are presented in Figure \ref{fig:original}. The color correction enables us to define a bijective transformation between the color cube surface, which we require for plotting the images, and the complex plane, where we perform the mixing and unmixing procedures.  

After the color correction, we apply a bijective cube to unit sphere transformation for every color triplet. Then, we use the well-known stereographic projection, which is an almost bijective transformation between the unit sphere and the complex plane. The stereographic projection is bijective everywhere except for the north pole of the unit sphere. For almost all photographs, we can choose the color coordinate system such that no pixel has a color triplet located on the north pole. This holds for our example and we can apply the inverse mappings in order to get back to the color cube surface.

We then have a single complex-number corresponding to every pixel of the color corrected images and we can present the images in a $\mathbb{C}^{960\times 720}$-matrix form. We denote the images with complex-valued elements as $\textbf{Z}_1, \textbf{Z}_2$ and $\textbf{Z}_3$ and apply the following transformation,
\begin{align*}
\begin{pmatrix} \textnormal{vec}(\textbf{X}_1)^\top \\ \textnormal{vec}(\textbf{X}_2)^\top \\ \textnormal{vec}(\textbf{X}_3)^\top \end{pmatrix}^\top = \begin{pmatrix} \textnormal{vec}(\textbf{Z}_1)^\top \\ \textnormal{vec}(\textbf{Z}_2)^\top \\ \textnormal{vec}(\textbf{Z}_3)^\top \end{pmatrix}^\top\left( \textbf{I}_3 + \textbf{E} + i\textbf{F}\right),
\end{align*}
 where $\textbf{I}_3$ is the identity matrix and the elements of $\textbf{E}$ and $\textbf{F}$ were generated independently from the univariate uniform distribution $\texttt{Unif(-1/2, 1/2)}$. The complex-valued mixed images $\textbf{X}_1,\textbf{X}_2$ and $\textbf{X}_3$ can then be plotted by performing the chain of  inverse mappings in reverse order. The corresponding images are presented in Figure \ref{fig:mixed}.

\begin{figure}
  \centering
    \includegraphics[width=1\textwidth]{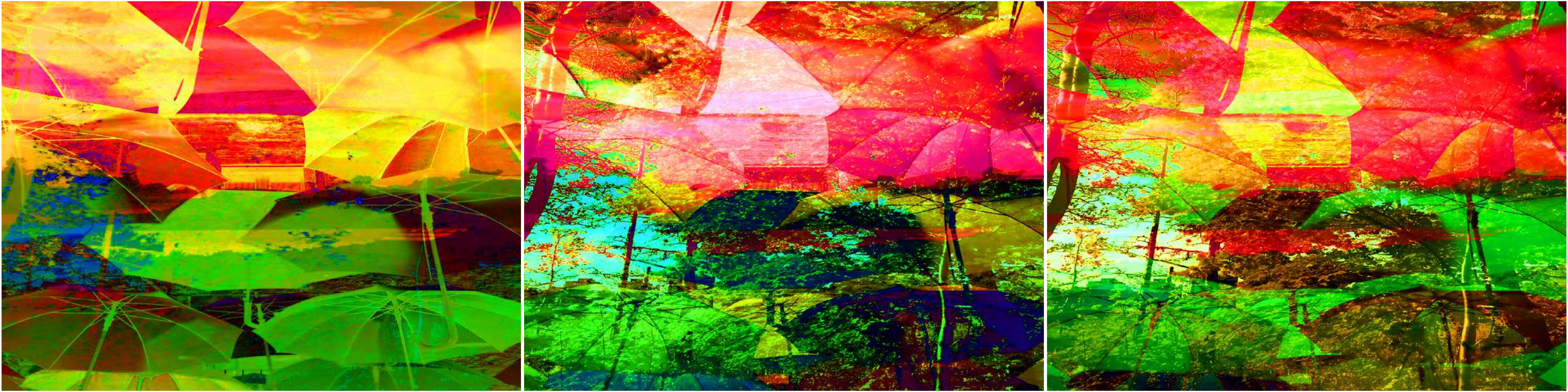}
  \caption{Mixed images. }
\label{fig:mixed}
\end{figure}

We then apply the unmixing procedure presented in Section \ref{sec:asympBSS}, with several different lag parameters $\tau$. In controlled settings, where the true mixing matrix is known, the minimum distance index (MD) index is a straightforward way to compare the performance of different estimators, see \cite{lietzen2016minimum,lietzen2019} for the complex-valued formulation. The MD index is a performance index, defined between zero and one, where zero corresponds to the separation being perfect. In addition, the MD index ensures that comparison between different estimators is fair, this is especially important in this paper since under our model assumptions, different estimators do not necessarily estimate the same population quantities. For additional details, see \cite{ilmonen2012asymptotics,lietzen2016minimum,lietzen2019}

We tried several different lag parameters and the best performance, in the light of the MD index and visual inspection, was attained with $\tau =1$ and the corresponding MD index value was approximately $0.173$. The unmixed images obtained using the unmixing procedure with $\tau=1$ are presented in Figure \ref{fig:unmixed}. In addition, for example, lag parameter choices $\tau = 2,3,961,962,963$ provided only slightly worse results, the corresponding MD index values were between 0.177 and 0.183. Note that the autocovariances are calculated from the vectorized images, and for example the first and the 961st entries are neighboring pixels in the unvectorized images.

\begin{figure}
  \centering
    \includegraphics[width=1\textwidth]{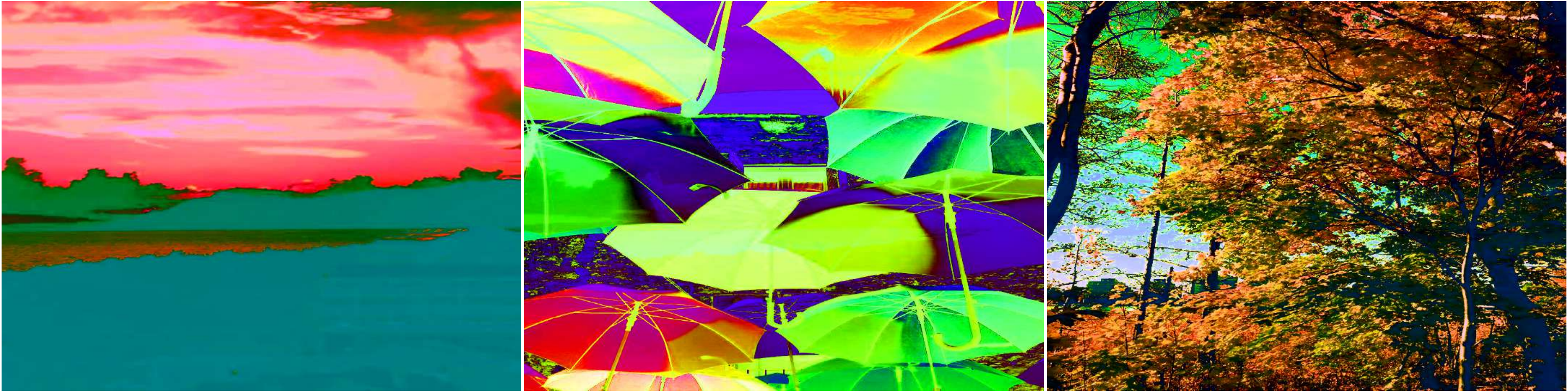}
  \caption{Unmixed images using the AMUSE procedure with $\tau =1$.}
\label{fig:unmixed}
\end{figure}

For comparison, we also applied complex-valued versions of the fourth-order blind identification (FOBI) and the second-order blind identification (SOBI) procedures, see \cite{ilmonen2013,belouchrani1997blind}. In this example, the the unmixing  procedure with $\tau=1$ outperformed both FOBI and SOBI. The MD index of the FOBI procedure was approximately 0.318. We applied the SOBI procedure with several different lag parameter combinations, and the best performance, when excluding the cases that correspond to the AMUSE transformation, was obtained with $\boldsymbol{\tau} = \{1,962\}$ and the corresponding MD index value was approximately 0.177.

When comparing the original color corrected images in Figure \ref{fig:original} and the unmixed images in Figure \ref{fig:unmixed}, the shapes seem to match almost perfectly. However, the difference of the colors is significant. The colors vary since the complex phase is not uniquely fixed in our model, recall Lemma \ref{lma:permphase}. In addition, recall that under our model, solutions that are a phase-shift away from each other are considered to be equivalent. In Figure \ref{fig:rotated}, we present the first unmixed image, produced by the unmixing procedure with $\tau=1$, under three equivalent solutions. The images in Figure \ref{fig:rotated} are obtained such that we first find an unmixing estimate $\hat{\boldsymbol{\Gamma}}$ and then right-multiply the obtained estimate with a diagonal matrix with diagonal entries $\exp(i\theta)$. In Figure \ref{fig:rotated}, the leftmost image is obtained with $\theta = \pi/4$, the middle is obtained with $\theta = 3\pi/4$ and the rightmost image is obtained with $\theta=5\pi/4$.

\begin{figure}
  \centering
    \includegraphics[width=1\textwidth]{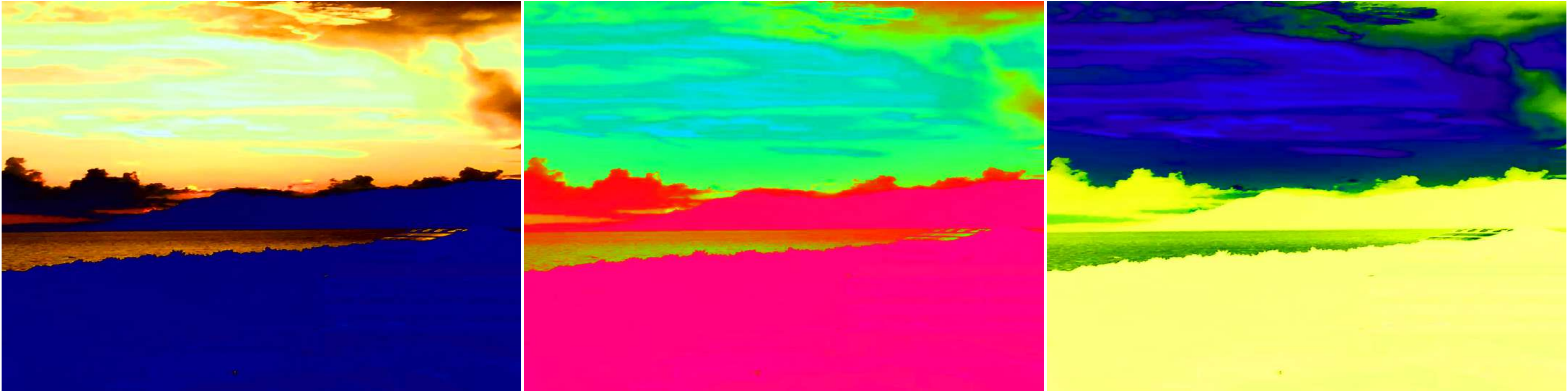}
  \caption{Unmixed images produced by equivalent solutions.}
\label{fig:rotated}
\end{figure}

\section{Acknowledgements}
N. Lietz\'{e}n gratefully acknowledges financial support from the Emil Aaltonen Foundation (grant 180144 N). The authors would like to thank Katariina Kilpinen for providing the photographs for Section \ref{sec:image}.

\appendix
\addcontentsline{toc}{section}{Appendices}
\renewcommand{\thesubsection}{\Alph{subsection}}

\section{Appendix}
\label{app:B}
Appendix \ref{app:B} contains the proofs of the results given in this paper. We begin by presenting a lemma, that we later utilize in the upcoming proofs.
\begin{lma}
\label{lma:inverse}
Let $\hat{\boldsymbol{\Sigma}}$ be a $T$-indexed sequence of $\mathbb{C}^{d\times d}$-valued nonsingular estimates and let $\alpha_T$ be a real-valued sequence that satisfies $\alpha_T \uparrow \infty$ as $T\rightarrow \infty$. Furthermore, let  $\alpha_T(\hat{\boldsymbol{\Sigma}} - \textbf{I}_d)=\mathcal{O}_p(1)$. Then, the following two statements hold.
\begin{itemize}
\item[(i)] $\hat{\boldsymbol{\Sigma}}^a\xrightarrow[T\rightarrow\infty]{\mathbb{P}} \textbf{I}_d$, where $a \in\left\{-1,-\frac{1}{2},1\right\}.$

\item[(ii)] $\alpha_T(\hat{\boldsymbol{\Sigma}}^a - \textbf{I}_d ) = \mathcal{O}_p(1)$, where $a \in\left\{-1,-\frac{1}{2}\right\}.$

\end{itemize}
\end{lma}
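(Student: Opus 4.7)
My plan is to treat the three powers separately, with the case $a=1$ trivial, $a=-1$ handled by a direct algebraic identity, and $a=-1/2$ reduced to the $a=-1$ case via a Slutsky-type factorization. Throughout, I read $\hat{\boldsymbol{\Sigma}}^{-1/2}$ as the unique conjugate symmetric (Hermitian) square root of $\hat{\boldsymbol{\Sigma}}^{-1}$, which is well-defined with probability tending to one because $\hat{\boldsymbol{\Sigma}}$ is nonsingular and concentrates near the identity (so it is positive definite eventually with probability one).

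First I would dispose of $a=1$: since $\alpha_T \uparrow \infty$, the assumption $\alpha_T(\hat{\boldsymbol{\Sigma}}-\textbf{I}_d)=\mathcal{O}_p(1)$ immediately yields $\hat{\boldsymbol{\Sigma}}-\textbf{I}_d = o_p(1)$, giving the claim in (i) for $a=1$. For $a=-1$, I would apply the continuous mapping theorem, using that matrix inversion is continuous at $\textbf{I}_d$ on the open set of nonsingular matrices, so $\hat{\boldsymbol{\Sigma}}^{-1} \xrightarrow{\mathbb{P}} \textbf{I}_d^{-1} = \textbf{I}_d$. For the corresponding rate statement in (ii), I would use the identity
\begin{equation*}
\hat{\boldsymbol{\Sigma}}^{-1} - \textbf{I}_d = -\hat{\boldsymbol{\Sigma}}^{-1}(\hat{\boldsymbol{\Sigma}} - \textbf{I}_d),
\end{equation*}
multiply both sides by $\alpha_T$, and combine the fact that $\hat{\boldsymbol{\Sigma}}^{-1} \xrightarrow{\mathbb{P}} \textbf{I}_d$ is $\mathcal{O}_p(1)$ with the hypothesis $\alpha_T(\hat{\boldsymbol{\Sigma}}-\textbf{I}_d)=\mathcal{O}_p(1)$; the product of an $\mathcal{O}_p(1)$ factor with an $\mathcal{O}_p(1)$ factor is $\mathcal{O}_p(1)$, finishing the case.

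For $a=-1/2$, convergence in (i) again follows from the continuous mapping theorem, because on the set of Hermitian positive definite matrices the map $A\mapsto A^{-1/2}$ is continuous at $\textbf{I}_d$. The substantive point is the rate in (ii), which I would obtain by exploiting that $\hat{\boldsymbol{\Sigma}}^{-1/2}$ commutes with itself, so
\begin{equation*}
\hat{\boldsymbol{\Sigma}}^{-1} - \textbf{I}_d = \hat{\boldsymbol{\Sigma}}^{-1/2}\hat{\boldsymbol{\Sigma}}^{-1/2} - \textbf{I}_d = (\hat{\boldsymbol{\Sigma}}^{-1/2} - \textbf{I}_d)(\hat{\boldsymbol{\Sigma}}^{-1/2} + \textbf{I}_d).
\end{equation*}
Because $\hat{\boldsymbol{\Sigma}}^{-1/2} + \textbf{I}_d \xrightarrow{\mathbb{P}} 2\textbf{I}_d$, it is nonsingular with probability tending to one, and its inverse is $\mathcal{O}_p(1)$ by another appeal to the continuous mapping theorem. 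Solving the displayed identity gives
\begin{equation*}
\alpha_T(\hat{\boldsymbol{\Sigma}}^{-1/2} - \textbf{I}_d) = \alpha_T(\hat{\boldsymbol{\Sigma}}^{-1} - \textbf{I}_d)(\hat{\boldsymbol{\Sigma}}^{-1/2} + \textbf{I}_d)^{-1},
\end{equation*}
and the right-hand side is $\mathcal{O}_p(1)$ by the $a=-1$ case already established and Slutsky's lemma.

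There is no serious technical obstacle; the only point requiring care is the implicit assumption that $\hat{\boldsymbol{\Sigma}}^{-1/2}$ is well-defined, which in the context of this paper corresponds to $\hat{\boldsymbol{\Sigma}}$ being conjugate symmetric and positive definite. This is ensured asymptotically because $\hat{\boldsymbol{\Sigma}}\xrightarrow{\mathbb{P}}\textbf{I}_d$, so positive definiteness holds on a set of probability tending to one, and $\mathcal{O}_p(1)$ conclusions are unaffected by modifying the estimator on a set of vanishing probability.
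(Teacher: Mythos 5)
Your proposal is correct and follows essentially the same route as the paper: continuous mapping for part (i), an algebraic identity relating $\hat{\boldsymbol{\Sigma}}^{-1}-\textbf{I}_d$ to $\hat{\boldsymbol{\Sigma}}-\textbf{I}_d$ for the $a=-1$ rate, and the factorization $\hat{\boldsymbol{\Sigma}}^{-1}-\textbf{I}_d=(\hat{\boldsymbol{\Sigma}}^{-1/2}-\textbf{I}_d)(\hat{\boldsymbol{\Sigma}}^{-1/2}+\textbf{I}_d)$ combined with Slutsky for $a=-1/2$. The only cosmetic difference is that for $a=-1$ you write $\hat{\boldsymbol{\Sigma}}^{-1}-\textbf{I}_d=-\hat{\boldsymbol{\Sigma}}^{-1}(\hat{\boldsymbol{\Sigma}}-\textbf{I}_d)$ and use $\hat{\boldsymbol{\Sigma}}^{-1}=\mathcal{O}_p(1)$ directly, whereas the paper expands the same identity into an $o_p(1)\mathcal{O}_p(1)+\mathcal{O}_p(1)$ decomposition; both are valid.
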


\begin{proof}[Proof of Lemma \ref{lma:inverse}]
The assumption $\alpha_T(\hat{\boldsymbol{\Sigma}} - \textbf{I}_d)=\mathcal{O}_p(1)$, implies that $\hat{\boldsymbol{\Sigma}} - \textbf{I}_d = (1/\alpha_T) \mathcal{O}_p(1) = o_p(1) \mathcal{O}_p(1) = o_p(1)$, that is, $\hat{\boldsymbol{\Sigma}}\xrightarrow[]{\mathbb{P}} \textbf{I}_d$, as $T\rightarrow\infty$.  Note that the matrix inversion and the conjugate symmetric square root of the inversion are continuous transformations in the neighborhood of $\boldsymbol{I}_d.$ Thus, we can apply the continuous mapping theorem, which gives us that $\hat{\boldsymbol{\Sigma}}^a \xrightarrow[]{\mathbb{P}} \boldsymbol{I}_d$, as $T\rightarrow \infty$, for $a\in\{-1,-\frac{1}{2}\}$.

Using part (i) and Slutsky's lemma, we get that the inverse is uniformly tight, since,
\begin{align*}
\alpha_T(\hat{\boldsymbol{\Sigma}}^{-1} - \textbf{I}_d) &= (\hat{\boldsymbol{\Sigma}}^{-1} - \textbf{I}_d) \alpha_T  (\textbf{I}_d - \hat{\boldsymbol{\Sigma}} ) +  \alpha_T  (\textbf{I}_d - \hat{\boldsymbol{\Sigma}} )\\
& = o_p(1) \mathcal{O}_p(1) + \mathcal{O}_p(1)  = \mathcal{O}_p(1).
\end{align*}
For the final part, first note that,
\begin{align*}
(\hat{\boldsymbol{\Sigma}}^{-1} - \textbf{I}_d) = (\hat{\boldsymbol{\Sigma}}^{-\frac{1}{2}}  - \textbf{I}_d) (\hat{\boldsymbol{\Sigma}}^{-\frac{1}{2} }  +\textbf{I}_d).
\end{align*}
Since $\hat{\boldsymbol{\Sigma}}^{-\frac{1}{2} }   + \textbf{I}_d$  converges in probability to $2\textbf{I}_d$ and the matrix inversion is a continuous transformation in the neighborhood of $2\textbf{I}_d$, the  continuous mapping theorem gives us that $(\hat{\boldsymbol{\Sigma}}^{-\frac{1}{2} }  +\textbf{I}_d)^{-1} \xrightarrow[T\rightarrow \infty]{\mathbb{P}} \frac{1}{2}\boldsymbol{I}_d.$ Thus by Slutsky's lemma,
\begin{align*}
\alpha_T(\hat{\boldsymbol{\Sigma}}^{-\frac{1}{2}}  - \textbf{I}_d) =\alpha_T(\hat{\boldsymbol{\Sigma}}^{-1} - \textbf{I}_d)(\hat{\boldsymbol{\Sigma}}^{-\frac{1}{2}} + \textbf{I}_d)^{-1}= \mathcal{O}_p(1).
\end{align*}
\end{proof}

\begin{proof}[Proof of Lemma \ref{lma:rcconvergence}]
Both directions of the claim follow directly from the multivariate version of the continuous mapping theorem. Note that, the mapping $f: (r_1,r_2,\ldots,r_{2d})^\top \mapsto (r_1,r_2,\ldots,r_d)^\top + i (r_{d+1}, r_{d+2},\ldots, r_{2d})^\top : \mathbb{R}^{2d} \rightarrow \mathbb{C}^d$ is homeomorphic, that is, $f$ is continuous and bijective, and the preimage $f^{-1}$ is also continuous.
\end{proof}

\begin{proof}[Proof of Corollary \ref{cor:eqvrealcomplex}]
The corollary follows directly by applying Lemma \ref{lma:rcconvergence} and by calculating the mean, the covariance and the relation matrix of $\textbf{z}$.
\end{proof}

\begin{proof}[Proof of Lemma \ref{lma:permphase}]
Let $\textbf{y}_\textnormal{\tiny{\textbullet}} \coloneqq g \circ \textbf{x}_\textnormal{\tiny{\textbullet}}$. First, let $\textbf{y}_\textnormal{\tiny{\textbullet}}$ be a solution, that is, $\textbf{y}_\textnormal{\tiny{\textbullet}}$  satisfies conditions (1)-(4) of Definition \ref{model:BSS}. Using condition (1), we get that,
\begin{align*}
\mathbb{E}\left[\textbf{y}_t  \right]=\mathbb{E}\left[\boldsymbol{\Gamma}\left(\textbf{x}_t - \boldsymbol{\mu} \right) \right] =\mathbb{E}\left[\boldsymbol{\Gamma}\left(\textbf{A}\textbf{z}_t + \boldsymbol{\mu}_{\textbf{x}} - \boldsymbol{\mu} \right) \right]  = \boldsymbol{\Gamma}\mathbb{E}\left[ \boldsymbol{\mu}_{\textbf{x}} - \boldsymbol{\mu} \right] = \textbf{0},
\end{align*}
and thus $\boldsymbol{\mu}_{\textbf{x}} = \boldsymbol{\mu} $. Next, we can rewrite condition (2) as,
\begin{align*}
\textbf{S}_0[\textbf{y}_t] = \mathbb{E}\left[ \textbf{y}_t \textbf{y}_t^\textnormal{H} \right] = \mathbb{E}\left[ \boldsymbol{\Gamma} \textbf{A} \textbf{z}_t  \left(\boldsymbol{\Gamma} \textbf{A} \textbf{z}_t\right)^\textnormal{H} \right] = \boldsymbol{\Gamma} \textbf{A} \textbf{A}^\textnormal{H} \boldsymbol{\Gamma}^\textnormal{H} = \textbf{I}_d,
\end{align*}
which implies that $ \boldsymbol{\Gamma} \textbf{A}$ is a unitary matrix. Similarly, we can rewrite condition (4) as,
\begin{align*}
\textbf{S}_\tau[\textbf{y}_t] =\frac{1}{2}\left(\mathbb{E}\left[ \textbf{y}_t \textbf{y}_{t+\tau}^\textnormal{H} \right]+  \mathbb{E}\left[ \textbf{y}_{t+\tau} \textbf{y}_t^\textnormal{H} \right]\right) = \boldsymbol{\Gamma} \textbf{A} \boldsymbol{\Lambda}_\tau \textbf{A}^\textnormal{H} \boldsymbol{\Gamma}^\textnormal{H} = \boldsymbol{\Lambda}_\tau,
\end{align*}
which is equivalent with,
\begin{align*}
 \boldsymbol{\Lambda}_\tau \textbf{A}^\textnormal{H} \boldsymbol{\Gamma}^\textnormal{H} =\textbf{A}^\textnormal{H} \boldsymbol{\Gamma}^\textnormal{H} \boldsymbol{\Lambda}_\tau.
\end{align*}
Since $\boldsymbol{\Lambda}_\tau$ has real-valued distinct diagonal entries and since $\boldsymbol{\Lambda}_\tau$ and $\textbf{A}^\textnormal{H} \boldsymbol{\Gamma}^\textnormal{H}$ commute, we get that $  \boldsymbol{\Gamma}\textbf{A}$ is also a diagonal matrix.  Consequently, $\boldsymbol{\Gamma}\textbf{A} $ is a unitary diagonal matrix, which implies that the diagonal elements of the matrix product are of the form $\exp(i\theta_1), \ldots, \exp(i\theta_d)$.

For the second part of the proof, let $\boldsymbol{\mu} =\boldsymbol{\mu}_{\textbf{x}}$ and   $\boldsymbol{\Gamma}\textbf{A} = \textbf{J}$, where $\textbf{J}$ is some phase-shift matrix. We next verify that  $\textbf{y}_\textnormal{\tiny{\textbullet}} \coloneqq g \circ \textbf{x}_\textnormal{\tiny{\textbullet}}$ is a solution, that is, we verify that $\textbf{y}_\textnormal{\tiny{\textbullet}}$ satisfies conditions (1)-(4) of Definition \ref{model:BSS}. Condition (1) is clearly satisfied, since $\mathbb{E}[\textbf{x}_t] = \boldsymbol{\mu}_{\textbf{x}}$. Furthermore, we have that the following holds for every $\tau,t\in\mathbb{N}$,
\begin{align*}
\ddot{\textbf{S}}_\tau[\textbf{y}_t] = \mathbb{E}\left[ \boldsymbol{\Gamma}\textbf{A} \textbf{z}_t  \left(\boldsymbol{\Gamma}\textbf{A} \textbf{z}_{t+\tau} \right)^\textnormal{H} \right] = \textbf{J} \ddot{\boldsymbol{\Lambda}}_\tau \textbf{J}^\textnormal{H},
\end{align*}
where $\ddot{\boldsymbol{\Lambda}}_0 = \textbf{I}_d$. Thus, conditions (2)-(3) are satisfied. Finally, we have for a fixed $\tau$ and for every $t\in\mathbb{N}$ that,
\begin{align*}
{\textbf{S}}_\tau[\textbf{y}_t] = \textbf{J} {\textbf{S}}_\tau[\textbf{z}_t]\textbf{J}^\textnormal{H} = \textbf{J} {\boldsymbol{\Lambda}}_\tau \textbf{J}^\textnormal{H} = {\boldsymbol{\Lambda}}_\tau,
\end{align*}
since diagonal matrices commute. Thus, condition (4) is satisfied and the proof is complete.
\end{proof}

\begin{proof}[Proof of Theorem \ref{bss:solutions}]
In order to simplify the notation, we denote $\textbf{S}_0 \coloneqq \textbf{S}_0\left[ \textbf{x}_t \right]$ and $\textbf{S}_\tau \coloneqq \textbf{S}_\tau\left[ \textbf{x}_t \right]$. First, let $g$ be a solution. Then, since $\textbf{x}_\textnormal{\tiny{\textbullet}}$ follows the mixing model, Lemma \ref{lma:permphase} gives,
\begin{align*}
\textbf{S}_0  \boldsymbol{\Gamma}^\textnormal{H} = \textbf{A} \textbf{A}^\textnormal{H} \boldsymbol{\Gamma}^\textnormal{H} = \textbf{A} \textbf{J}^\textnormal{H} \quad \Longrightarrow \quad \textbf{A} = \textbf{S}_0  \boldsymbol{\Gamma}^\textnormal{H} \textbf{J}.
\end{align*}
Using the above expressions, we get that,
\begin{align*}
\textbf{S}_\tau  \boldsymbol{\Gamma}^\textnormal{H}  =  \textbf{A} \boldsymbol{\Lambda}_\tau \textbf{A}^\textnormal{H} \boldsymbol{\Gamma}^\textnormal{H} =  \textbf{S}_0  \boldsymbol{\Gamma}^\textnormal{H} \textbf{J} \boldsymbol{\Lambda}_\tau  \textbf{J}^\textnormal{H} = \textbf{S}_0  \boldsymbol{\Gamma}^\textnormal{H} \boldsymbol{\Lambda}_\tau,
\end{align*}
and Equation \eqref{eq:sol1} follows by left-multiplying with $ \textbf{S}_0^{-1}$. In addition, Lemma \ref{lma:permphase} directly gives Equation \eqref{eq:sol2}. Since the process $\textbf{x}_\textnormal{\tiny{\textbullet}}$  is weakly stationary, the previous is true for every $t\in \mathbb{N}$.

Next, let Equations \eqref{eq:sol1} and \eqref{eq:sol2} be satisfied for every $t\in\mathbb{N}$. Left-multiplying with $\textbf{S}_0$, Equation \eqref{eq:sol1}  can be reformulated as,
\begin{align*}
\textbf{A} \boldsymbol{\Lambda}_\tau \textbf{A}^\textnormal{H} \boldsymbol{\Gamma}^\textnormal{H} = \textbf{A} \textbf{A}^\textnormal{H} \boldsymbol{\Gamma}^\textnormal{H} \boldsymbol{\Lambda}_\tau,
\end{align*}
which, after a left-multiplication with $\textbf{A}^{-1}$, gives that $ \textbf{A}^\textnormal{H}  \boldsymbol{\Gamma}^\textnormal{H}$ and $ \boldsymbol{\Lambda}_\tau$ commute. Since the diagonal matrix $ \boldsymbol{\Lambda}_\tau$ has real-valued distinct diagonal elements, we get that  $ \textbf{A}^\textnormal{H}   \boldsymbol{\Gamma}^\textnormal{H}$ is a diagonal matrix. Then, the scaling-equation gives that $\boldsymbol{\Gamma}\textbf{A}   \textbf{A}^\textnormal{H}   \boldsymbol{\Gamma}^\textnormal{H} = \textbf{I}_d$, i.e.,  $\boldsymbol{\Gamma}\textbf{A}$ is a unitary matrix. Consequently, $\boldsymbol{\Gamma}\textbf{A} = \textbf{J}$, where $\textbf{J}$ is some phase-shift matrix.  By  Lemma \ref{lma:permphase}, the functional $g$ is hereby a solution to the corresponding unmixing problem, and the proof is complete.
\end{proof}

\begin{proof}[Proof of Corollary \ref{cor:solution}]
Under the assumptions of Definition \ref{model:BSS}, we have that $\textbf{S}_0$ is a positive-definite matrix and that $\textbf{S}_\tau$ is a conjugate symmetric matrix. Thus, the matrix-square root of $\textbf{S}_0$ exists and the conjugate symmetric matrix-square root is unique, and consequently similar arguments as in the real-valued counterpart presented in \cite{ilmonen2012invariant} can be applied here. Hereby, $\textbf{S}_0^{-1/2} \textbf{S}_\tau \textbf{S}_0^{-1/2}$ can always be eigendecomposed.

The second part of the proof follows directly from Theorem \ref{bss:solutions}, by verifying that $\boldsymbol{\Gamma}  = \textbf{V}^\textnormal{H}\textbf{S}_0^{-1/2}$ satisfies Equations \eqref{eq:sol1} and \eqref{eq:sol2}.
\end{proof}

\begin{proof}[Proof of Lemma \ref{lma:affeqv}]
By Lemma \ref{lma:permphase}, we have that $\boldsymbol{\Gamma}\textbf{A} = \textbf{J}_1$ and $\tilde{\boldsymbol{\Gamma}}\textbf{CA} = \textbf{J}_2$, where $\textbf{J}_1$ and $\textbf{J}_2$ are some phase-shift matrices. Recall that the mixing matrix $\textbf{A}$ is nonsingular. Hereby, we get that $\boldsymbol{\Gamma} = \textbf{J}_1\textbf{A}^{-1}$ and $\tilde{\boldsymbol{\Gamma}}= \textbf{J}_2(\textbf{CA})^{-1}.$ By using the obtained expressions for $\boldsymbol{\Gamma} $ and  $\tilde{\boldsymbol{\Gamma}}$, we get that,
\begin{align*}
\tilde{\boldsymbol{\Gamma}}= \textbf{J}_2 \textbf{I}_d\textbf{A}^{-1}\textbf{C}^{-1} = \textbf{J}_2 \textbf{J}_1^\textnormal{H} \textbf{J}_1\textbf{A}^{-1}\textbf{C}^{-1} =  \textbf{J}_3 \boldsymbol{\Gamma} \textbf{C}^{-1},
\end{align*}
where $\textbf{J}_3 = \textbf{J}_2 \textbf{J}_1^\textnormal{H}$ is a phase-shift matrix, as the set of phase-shift matrices is stable under matrix multiplication.
\end{proof}

\begin{proof}[Proof of Lemma \ref{lma:samplesol}]
By assumption, the components of $\textbf{z}_\textnormal{\tiny{\textbullet}}$ have continuous marginals,  which implies that covariance matrix estimates are almost surely nonsingular. Thus, we can almost surely find a unique matrix  $\hat{\boldsymbol{\Sigma}}_0$, which is the conjugate symmetric inverse square root of $\hat{\textbf{S}}_0$.  We proceed to verify that $\hat{\boldsymbol{\Gamma}} =  \hat{\textbf{V}}^\textnormal{H} \hat{\boldsymbol{\Sigma}}_0$ satisfies the conditions of Definition \ref{def:estimating}. First,
\begin{align*}
 \hat{\textbf{V}}^\textnormal{H}\hat{\boldsymbol{\Sigma}}_0\hat{\textbf{S}}_0 \left( \hat{\textbf{V}}^\textnormal{H} \hat{\boldsymbol{\Sigma}}_0\right)^\textnormal{H} =  \hat{\textbf{V}}^\textnormal{H}  \hat{\textbf{V}}  = \textbf{I}_d.
\end{align*}
and similarly,
\begin{align*}
 \hat{\textbf{V}}^\textnormal{H} \hat{\boldsymbol{\Sigma}}_0\hat{\textbf{S}}_\tau \left( \hat{\textbf{V}}^\textnormal{H} \hat{\boldsymbol{\Sigma}}_0\right)^\textnormal{H} =
\hat{\textbf{V}}^\textnormal{H} \hat{\textbf{V}} \hat{\boldsymbol{\Lambda}}_\tau \hat{\textbf{V}}^\textnormal{H} \hat{\textbf{V}} = \hat{\boldsymbol{\Lambda}}_\tau.
\end{align*}
\end{proof}

 \begin{proof}[Proof of Lemma \ref{lma:affinvsample}]
By Lemma \ref{lma:samplesol}, we can write ${\hat{\boldsymbol{\Gamma}}} = \hat{\textbf{V}}^\textnormal{H} \hat{\boldsymbol{\Sigma}}_0$, where $\hat{\boldsymbol{\Sigma}}_0$ denotes the conjugate symmetric inverse square root of $\hat{\textbf{S}}_0.$ Note that the matrix-valued estimators are affine equivariant in the sense that, 
\begin{align*}
\hat{\textbf{S}}_j[\textbf{X} \textbf{B}^\top - \textbf{1}_T\textbf{b}^\top] =\textbf{B}\hat{\textbf{S}}_j[\textbf{X}]\textbf{B}^\textnormal{H},
\end{align*}
 $j \in\{0,\tau\}$, for all nonsingular $\mathbb{C}^{d\times d}$-matrices $\textbf{B}$ and all $\mathbb{C}^{d}$-vectors $\textbf{b}$. We proceed to verify that $\textbf{J} {\hat{\boldsymbol{\Gamma}}} \textbf{B}^{-1}$ satisfies the criteria of a finite sample solution, given in Definition \ref{def:estimating}. Using the affine equivariance property, we get that,
\begin{align*}
\textbf{J} {\hat{\boldsymbol{\Gamma}}} \textbf{B}^{-1} \hat{\textbf{S}}_0[\tilde{\textbf{X}}] \left(\textbf{J} \hat{\boldsymbol{\Gamma}} \textbf{B}^{-1}\right)^{\textnormal{H}} = \textbf{J}  \hat{\textbf{V}}^\textnormal{H} \hat{\boldsymbol{\Sigma}}_0\hat{\textbf{S}}_0[{\textbf{X}}] \hat{\boldsymbol{\Sigma}}_0 \hat{\textbf{V}}\textbf{J}^{\textnormal{H}} = \textbf{JJ}^{\textnormal{H}} = \textbf{I}_d,
\end{align*}
and,
\begin{align*}
\textbf{J} {\hat{\boldsymbol{\Gamma}}} \textbf{B}^{-1} \hat{\textbf{S}}_\tau[\tilde{\textbf{X}}] \left(\textbf{J} \hat{\boldsymbol{\Gamma}} \textbf{B}^{-1}\right)^{\textnormal{H}} = \textbf{J}  \hat{\textbf{V}}^\textnormal{H} \hat{\boldsymbol{\Sigma}}_0\hat{\textbf{S}}_\tau[{\textbf{X}}] \hat{\boldsymbol{\Sigma}}_0 \hat{\textbf{V}}\textbf{J}^{\textnormal{H}} = \textbf{J} \hat{\boldsymbol{\Lambda}}_\tau \textbf{J}^{\textnormal{H}} =\hat{\boldsymbol{\Lambda}}_\tau .
\end{align*}
Hereby, $\textbf{J} {\hat{\boldsymbol{\Gamma}}} \textbf{B}^{-1}$ is a finite sample solution for $\tilde{\textbf{X}}$ and the proof is complete.
\end{proof}

\begin{proof}[Proof of Lemma \ref{lma:consistent}]
Denote $\hat{\textbf{S}}_j \coloneqq \hat{\textbf{S}}_j[\textbf{X} ] $ and ${\textbf{S}}_j \coloneqq {\textbf{S}}_j[\textbf{x}_t ]$, where $j \in \{0,\tau\}$. By Lemmas \ref{lma:affeqv} and \ref{lma:affinvsample}, it is sufficient to consider the trivial mixing scenario $\textbf{A} = \textbf{I}_d$. Note that $\gamma_T (\hat{\textbf{J}}\hat{\boldsymbol{\Gamma}}-\boldsymbol{\Gamma})=\gamma_T (\hat{\textbf{J}}\hat{\boldsymbol{\Gamma}}\boldsymbol{\Gamma}^{-1}-\textbf{I}_d)\boldsymbol{\Gamma}$ and by Lemma \ref{lma:affinvsample}, we have that $\hat{\textbf{J}}\hat{\boldsymbol{\Gamma}}\boldsymbol{\Gamma}^{-1}$ is a finite sample solution for $\textbf{X}\boldsymbol{\Gamma}^\top$. Hereby, the trivial mixing can be generalized to any full-rank mixing matrix $\textbf{A}$.

 Under trivial mixing, we have that $\alpha_T(\hat{\textbf{S}}_0 - \textbf{I}_d) = \mathcal{O}_p(1)$ and $\beta_T(\hat{\textbf{S}}_\tau - \boldsymbol{\Lambda}_\tau ) = \mathcal{O}_p(1)$. Furthermore, by Corollary \ref{cor:solution} and Lemma \ref{lma:samplesol}, we have $\hat{\boldsymbol{\Gamma}} =  \hat{\boldsymbol{V}}^\textnormal{H}\hat{\boldsymbol{\Sigma}}_0$ and ${\boldsymbol{\Gamma}} =  {\boldsymbol{V}}^\textnormal{H}{\boldsymbol{\Sigma}}_0$, where $\hat{\boldsymbol{\Sigma}}_0 = \hat{\textbf{S}}_0^{-1/2}$ and ${\boldsymbol{\Sigma}}_0 = {\textbf{S}}_0^{-1/2} $ are conjugate symmetric. Note that under trivial mixing, we have ${\textbf{S}}_0^{-1/2}  = \textbf{I}_d$ and $\textbf{V} = \textbf{J}$, where $\textbf{J}$ is some phase-shift matrix.

We next denote $\hat{\boldsymbol{\Sigma}}_\tau =\hat{\boldsymbol{\Sigma}}_0 \hat{\textbf{S}}_\tau  \hat{\boldsymbol{\Sigma}}_0$ and show that $\gamma_T(\hat{\boldsymbol{\Sigma}}_\tau - \boldsymbol{\Lambda}_\tau) = \mathcal{O}_p(1)$. The uniform tightness follows from Lemma \ref{lma:inverse}, Slutsky's Lemma and the following factorization,
\begin{align*}
\gamma_T (\hat{\boldsymbol{\Sigma}}_\tau - \boldsymbol{\Lambda}_\tau) &= \gamma_T\left[( \hat{\boldsymbol{\Sigma}}_0 - \textbf{I}_d  )( \hat{\textbf{S}}_\tau - \boldsymbol{\Lambda}_\tau)( \hat{\boldsymbol{\Sigma}}_0 - \textbf{I}_d ) + ( \hat{\boldsymbol{\Sigma}}_0 - \textbf{I}_d  )( \hat{\textbf{S}}_\tau - \boldsymbol{\Lambda}_\tau) \right. \\
&+ ( \hat{\boldsymbol{\Sigma}}_0 - \textbf{I}_d  )\boldsymbol{\Lambda}_\tau( \hat{\boldsymbol{\Sigma}}_0 - \textbf{I}_d  ) + ( \hat{\boldsymbol{\Sigma}}_0 - \textbf{I}_d  )\boldsymbol{\Lambda}_\tau +  ( \hat{\textbf{S}}_\tau - \boldsymbol{\Lambda}_\tau)  \\
&+ \left. ( \hat{\textbf{S}}_\tau - \boldsymbol{\Lambda}_\tau)( \hat{\boldsymbol{\Sigma}}_0 - \textbf{I}_d  ) + \boldsymbol{\Lambda}_\tau( \hat{\boldsymbol{\Sigma}}_0 - \textbf{I}_d  ) \right]\\
&= o_p(1) + \gamma_T \left[( \hat{\boldsymbol{\Sigma}}_0 - \textbf{I}_d  )\boldsymbol{\Lambda}_\tau +  ( \hat{\textbf{S}}_\tau - \boldsymbol{\Lambda}_\tau) + \boldsymbol{\Lambda}_\tau( \hat{\boldsymbol{\Sigma}}_0 - \textbf{I}_d  ) \right]\\
&= \mathcal{O}_p(1).
\end{align*}
Next, recall the eigendecomposition $\hat{\boldsymbol{\Sigma}}_\tau = \hat{\textbf{V}} \hat{\boldsymbol{\Lambda}}_\tau \hat{\textbf{V}}^\textnormal{H}$ and that the space of unitary matrices is compact. Consequently, $\textbf{U} = \mathcal{O}_p(1)$ for any unitary $\textbf{U}$. By right-multiplying both sides of the eigendecomposition with $\hat{\boldsymbol{V}}$ and by subtracting $\boldsymbol{\Lambda}_\tau$ from both sides, we get,
\begin{align*}
\hat{\boldsymbol{\Sigma}}_\tau  \hat{\textbf{V}} - {\boldsymbol{\Lambda}}_\tau = \hat{\textbf{V}} \hat{\boldsymbol{\Lambda}}_\tau - {\boldsymbol{\Lambda}}_\tau,
\end{align*}
where both sides of the equation can be further factorized as follows,
\begin{align*}
(\hat{\boldsymbol{\Sigma}}_\tau - \boldsymbol{\Lambda}_\tau)\hat{\textbf{V}} +    {\boldsymbol{\Lambda}}_\tau ( \hat{\textbf{V}} - \textbf{I}_d) = (\hat{\textbf{V}} - \textbf{I}_d )\hat{\boldsymbol{\Lambda}}_\tau + (\hat{\boldsymbol{\Lambda}}_\tau  - {\boldsymbol{\Lambda}}_\tau),
\end{align*}
and by multiplying with $\gamma_T$ and by rearranging the terms, we obtain,
\begin{align}
\label{eq:evals}
\gamma_T  \left[{\boldsymbol{\Lambda}}_\tau ( \hat{\textbf{V}} - \textbf{I}_d) - (\hat{\textbf{V}} - \textbf{I}_d )\hat{\boldsymbol{\Lambda}}_\tau - (\hat{\boldsymbol{\Lambda}}_\tau  - {\boldsymbol{\Lambda}}_\tau)\right] =\mathcal{O}_p(1).
 \end{align}
By assumption, the diagonal matrix ${\boldsymbol{\Lambda}}_\tau$ has distinct real-valued diagonal elements. Furthermore, since the matrix $\hat{\boldsymbol{\Lambda}}_\tau$ is obtained by estimating an eigendecomposition, it is also diagonal, with diagonal elements denoted as $\hat{\lambda}_\tau^{(1)},\ldots, \hat{\lambda}_\tau^{(d)}$. Then, consider the element $(j,k)$, $j\neq k$, of Equation \eqref{eq:evals},
\begin{align}
\label{eq:evals2}
\gamma_T \hat{\textbf{V}}_{jk}({\lambda}_\tau^{(j)} -  \hat{\lambda}_\tau^{(k)})  =\mathcal{O}_p(1).
 \end{align}
Since, $\gamma_T(\hat{\boldsymbol{\Sigma}}_\tau - \boldsymbol{\Lambda}_\tau) = \mathcal{O}_p(1)$, we get that $\hat{\lambda}_\tau^{(k)}$ converges in probability to ${\lambda}_\tau^{(k)}$. Furthermore, since the diagonal elements of $\boldsymbol{\Lambda}_\tau$ are distinct, we can divide both sides of Equation \eqref{eq:evals2} with $\lambda_\tau^{(j)} - \hat{\lambda}_\tau^{(k)}$, which gives us that $\gamma_T[\hat{\textbf{V}} ]_{jk} = \mathcal{O}_p(1)$ holds asymptotically when $j\neq k$.

Next, let $\hat{\textbf{J}}$ be a $T$-indexed sequence of phase-shift matrices, such that the diagonal entries of the product  $\hat{\textbf{V}}\hat{\textbf{J}}^\textnormal{H} $ are in the positive real-axis for every $T\in \mathbb{N}$. Note that any complex-number can be expressed in the form $r\exp(i\theta)$, where $\theta$ is the phase and $r$ is the modulus, i.e., the length of the complex number. In the matrix case, we can similarly express $\hat{\textbf{V}}$ such that $\hat{\textbf{V}} = \hat{\textbf{V}}_{r} \hat{\textbf{V}}_{\theta}$, where $\hat{\textbf{V}}_{\theta}$ is a phase-shift matrix that contains the phases of the diagonal diagonal elements of $\hat{\textbf{V}}$ and consequently $\hat{\textbf{V}}_{r} $ is a matrix with real-valued diagonal entries. Then, given a phase-shift matrix $\hat{\textbf{V}}_{\theta} = \textnormal{diag}(\exp(i\theta_1,\ldots,i\theta_d))$, we can always construct a matrix $\hat{\textbf{J}}^\textnormal{H} = \textnormal{diag}(\exp(-i\theta_1,\ldots,-i\theta_d))$ such that the product $\hat{\textbf{V}}\hat{\textbf{J}}^\textnormal{H} $ has real-valued diagonal entries.

After the diagonal entries have been rotated to the positive real-axis, the rotated diagonal elements are equal to the corresponding moduli, that is, $\hat{\textbf{V}}_{kk}\hat{\textbf{J}}_{kk}^\textnormal{H} = | \hat{\textbf{V}}_{kk}|$. Then, since $\hat{\textbf{V}}$ is unitary, each of its row vectors has length one and the absolute value of a single element is at most one, which gives us,
\begin{align*}
 \left| 1 -  \hat{\textbf{V}}_{kk} \hat{\textbf{J}}^\textnormal{H}_{kk} \right|=1 -  \hat{\textbf{V}}_{kk} \hat{\textbf{J}}^\textnormal{H}_{kk} = 1 - \left|\hat{\textbf{V}}_{kk}\right| = 1 - \sqrt{1-\sum_{\substack{h=1 \\ h\neq k}}^d \left|\hat{\textbf{V}}_{hk}\right|^2}.
\end{align*}
By unitarity of $\hat{\textbf{V}}$, the above square root is always between zero and one, and thus squaring the square root produces a smaller or equal number. Hereby, using the (asymptotic) uniform tightness of the off-diagonal elements, we get that asymptotically,
\begin{align*}
 \left| 1 -  \hat{\textbf{V}}_{kk} \hat{\textbf{J}}^\textnormal{H}_{kk} \right| \leq 1  -{1+\sum_{\substack{h=1 \\ h\neq k}}^d \left|\hat{\textbf{V}}_{hk}\right|^2} = \sum_{\substack{h=1 \\ h\neq k}}^d \left|\hat{\textbf{V}}_{hk}\right|^2 = \mathcal{O}_p(1/\gamma_T^2).
\end{align*}
Hereby, by combining the results for the diagonal and off-diagonal elements, we get that there exists a sequence of phase-shift matrices $\hat{\textbf{J}}$ such that $\gamma_T(\hat{\textbf{V}}\hat{\textbf{J}}^\textnormal{H} - \textbf{I}_d) = \mathcal{O}_p(1)$ holds asymptotically.

The claim of the lemma can then be written as,
\begin{align*}
\gamma_T \left[\hat{\textbf{J}} \hat{\boldsymbol{\Gamma}} - \textbf{I}_d \right] &=
\gamma_T\left[\left((\hat{\textbf{V}} \hat{\textbf{J}}^\textnormal{H}\right)^\textnormal{H}
\hat{\boldsymbol{\Sigma}}_0- \textbf{I}_d \right]\\
&=\left(\hat{\textbf{V}} \hat{\textbf{J}}^\textnormal{H}\right)^\textnormal{H} \gamma_T(\hat{\boldsymbol{\Sigma}}_0 - \textbf{I}_d ) +\gamma_T \left( \hat{\textbf{V}} \hat{\textbf{J}}^\textnormal{H} - \textbf{I}_d \right)^\textnormal{H}\\
&= \mathcal{O}_p(1) \mathcal{O}_p(1) +  \mathcal{O}_p(1) = \mathcal{O}_p(1) .
\end{align*}
\end{proof}

\begin{proof}[Proof of Theorem \ref{thm:limiting}]
Let $\hat{\textbf{S}}_j\coloneqq \hat{\textbf{S}}_j[\textbf{X} ]$, $\tau \in\{0,\tau\}$, and recall the equations from Definition \ref{def:estimating},
\begin{align}
\label{proof:eqs}
\hat{\boldsymbol{\Gamma} }  \hat{\textbf{S}}_0 \hat{\boldsymbol{\Gamma}}^\textnormal{H}  =\textbf{I}_d \qquad \textnormal{ and} \qquad \hat{\boldsymbol{\Gamma} }  \hat{\textbf{S}}_\tau \hat{\boldsymbol{\Gamma}}^\textnormal{H}  =\hat{\boldsymbol{\Lambda}}_\tau.
\end{align}
In order to simplify the notation, we denote $\hat{\textbf{G}} \coloneqq \hat{\textbf{J}}\hat{\boldsymbol{\Gamma}},$ where $\hat{\textbf{J}}$ is a $T$-indexed sequence of phase-shift matrices such that the diagonal elements of $ \hat{\textbf{J}}\hat{\boldsymbol{\Gamma}}$ are on the positive real-axis.   Under Lemma \ref{lma:consistent}, we have that $\gamma_T(\hat{\textbf{G}}  - \textbf{I}_d) = \mathcal{O}_p(1)$ and note that both parts of Equation \eqref{proof:eqs} also hold for $\hat{\textbf{G}}$.

The left-part of Equation \eqref{proof:eqs} can then be expanded as,
\begin{align*}
 &( \hat{\textbf{G}}  - \textbf{I}_d ) \hat{\textbf{S}}_0 \hat{\textbf{G}} ^\textnormal{H} + (\hat{\textbf{S}}_0  - \textbf{I}_d ) \hat{\textbf{G}}^\textnormal{H} +  ( \hat{\textbf{G}}^\textnormal{H} - \textbf{I}_d ) = 0,
\end{align*}
where the left-hand-side can be further expanded as,
\begin{align*}
 &( \hat{\textbf{G}} - \textbf{I}_d )( \hat{\textbf{S}}_0 - \textbf{I}_d ) (\hat{\textbf{G}}^\textnormal{H} - \textbf{I}_d ) +
( \hat{\textbf{G}} - \textbf{I}_d )( \hat{\textbf{S}}_0 - \textbf{I}_d )
+( \hat{\textbf{G}}- \textbf{I}_d )( \hat{\textbf{G}}^\textnormal{H} - \textbf{I}_d )
\\
 &+ ( \hat{\textbf{G}} - \textbf{I}_d )+(\hat{\textbf{S}}_0 - \textbf{I}_d )( \hat{\textbf{G}}^\textnormal{H} - \textbf{I}_d )
+(\hat{\textbf{S}}_0 - \textbf{I}_d ) +  ( \hat{\textbf{G}}^\textnormal{H} - \textbf{I}_d )\\
& = ( \hat{\textbf{G}}- \textbf{I}_d ) +   (\hat{\textbf{S}}_0 - \textbf{I}_d ) +  ( \hat{\textbf{G}}^\textnormal{H} - \textbf{I}_d )+ \mathcal{O}_p(1/\gamma^2_T).
\end{align*}
By rearranging the terms, we get the following form,
\begin{align}
\label{eq1:appx}
 ( \hat{\textbf{G}}^\textnormal{H} - \textbf{I}_d  ) = ( \textbf{I}_d - \hat{\textbf{G}}  )+  (\textbf{I}_d -\hat{\textbf{S}}_0  )  + \mathcal{O}_p(1/\gamma^2_T).
\end{align}
Similarly, the right-part of Equation \eqref{proof:eqs} can be expanded as,
\begin{align*}
( \hat{\textbf{G}}- \textbf{I}_d)  \hat{\textbf{S}}_\tau\hat{\textbf{G}}^\textnormal{H}  + ( \hat{\textbf{S}}_\tau - \boldsymbol{\Lambda}_\tau)  \hat{\textbf{G}}^\textnormal{H}  +   \boldsymbol{\Lambda}_\tau( \hat{\textbf{G}}^\textnormal{H} - \textbf{I}_d) +(  \boldsymbol{\Lambda}_\tau - \hat{\boldsymbol{\Lambda}}_\tau ) = 0,
\end{align*}
where the left-hand-side can be further expanded as,
\begin{align*}
&( \hat{\textbf{G}}- \textbf{I}_d) ( \hat{\textbf{S}}_\tau-  \boldsymbol{\Lambda}_\tau ) (  \hat{\textbf{G}}^\textnormal{H} - \textbf{I}_d ) +
( \hat{\textbf{G}} - \textbf{I}_d) ( \hat{\textbf{S}}_\tau -  \boldsymbol{\Lambda}_\tau )
\\
&+ ( \hat{\textbf{G}} - \textbf{I}_d)  \boldsymbol{\Lambda}_\tau    (\hat{\textbf{G}}^\textnormal{H} - \textbf{I}_d)
 + ( \hat{\textbf{G}} - \textbf{I}_d)  \boldsymbol{\Lambda}_\tau
+( \hat{\textbf{S}}_\tau - \boldsymbol{\Lambda}_\tau)  (\hat{\textbf{G}}^\textnormal{H} - \textbf{I}_d ) \\
&+ ( \hat{\textbf{S}}_\tau - \boldsymbol{\Lambda}_\tau)
 +   \boldsymbol{\Lambda}_\tau( \hat{\textbf{G}}^\textnormal{H} - \textbf{I}_d ) +( \boldsymbol{\Lambda}_\tau -  \hat{\boldsymbol{\Lambda}}_\tau )  \\
&=(\hat{\textbf{G}} - \textbf{I}_d)  \boldsymbol{\Lambda}_\tau  +   ( \hat{\textbf{S}}_\tau - \boldsymbol{\Lambda}_\tau)  +  \boldsymbol{\Lambda}_\tau( \hat{\textbf{G}}^\textnormal{H} - \textbf{I}_d )+( \boldsymbol{\Lambda}_\tau -  \hat{\boldsymbol{\Lambda}}_\tau ) +\mathcal{O}_p(1/\gamma^2_T).
\end{align*}
Hereby, we obtain,
\begin{align*}
( \hat{\textbf{G}}  - \textbf{I}_d  )  \boldsymbol{\Lambda}_\tau  +  (  \hat{\textbf{S}}_\tau -\boldsymbol{\Lambda}_\tau)       +  \boldsymbol{\Lambda}_\tau(   \hat{\textbf{G}} ^\textnormal{H}-\textbf{I}_d  ) = (  \hat{\boldsymbol{\Lambda}}_\tau  - \boldsymbol{\Lambda}_\tau  )  +\mathcal{O}_p(1/\gamma^2_T),
\end{align*}
which is, by using the expression for $\hat{\textbf{G}}^\textnormal{H}-\textbf{I}_d $ given by  Equation \eqref{eq1:appx}, equivalent with,
\begin{align}
\label{eq2:appx}
\hat{\textbf{G}} \boldsymbol{\Lambda}_\tau - \boldsymbol{\Lambda}_\tau\hat{\textbf{G}}    &=   ( \boldsymbol{\Lambda}_\tau-  \hat{\textbf{S}}_\tau )+ \boldsymbol{\Lambda}_\tau (\hat{\textbf{S}}_0 -  \textbf{I}_d ) +  (  \hat{\boldsymbol{\Lambda}}_\tau  - \boldsymbol{\Lambda}_\tau  )+ \mathcal{O}_p(1/\gamma^2_T).
\end{align}

Recall that $\hat{\textbf{G}} = \hat{\textbf{J}}\hat{\boldsymbol{\Gamma}}$ and that $\boldsymbol{\Lambda}_\tau$, $\hat{\boldsymbol{\Lambda}}_\tau$ are both diagonal matrices with real-valued diagonal elements.
Then, by rearranging the terms and by considering the element $(j,j)$ of Equation \eqref{eq1:appx}, we get that
\begin{align*}
\hat{\textbf{J}}_{jj}\hat{\boldsymbol{\Gamma}}_{jj} - 1 = \frac{1}{2}\left(1 -  \left[\hat{\textbf{S}}_0\right]_{jj} \right) + \mathcal{O}_p(1/\gamma^2_T).
\end{align*}
Similarly, by considering the element $(j,k)$, $j\neq k$, of Equation \eqref{eq2:appx}, we get that
\begin{align*}
(\lambda^{(k)}_\tau - \lambda^{(j)}_\tau) \hat{\textbf{J}}_{jj}\hat{\boldsymbol{\Gamma}} _{jk} = \lambda^{(j)}_\tau \left[\hat{\textbf{S}}_0\right]_{jk} - \left[\hat{\textbf{S}}_\tau\right]_{jk} + \mathcal{O}_p(1/\gamma^2_T).
\end{align*}
The theorem then follows by multiplying both sides with $\gamma_T$.
\end{proof}

\begin{proof}[Proof of Theorem \ref{thm:covconvergence}]
In order to incorporate the shift $\tau$ given in Definition \ref{model:BSS}, we first introduce some notation. For $\boldsymbol{\eta}_{\textnormal{\tiny{\textbullet}}}$ given in Assumption \ref{ass:summablecov}, we set $ \textbf{y}^\top_\textnormal{\tiny{\textbullet}} = \begin{pmatrix}\boldsymbol{\eta}_{\textnormal{\tiny{\textbullet}}}^\top & \boldsymbol{\eta}_{\textnormal{\tiny{\textbullet}}+\tau}^\top \end{pmatrix}$. We also introduce functions $g_j(\textbf{y}_\textnormal{\tiny{\textbullet}}):\mathbb{R}^{2\ell}\rightarrow \mathbb{R},  j\in\{1,2,\ldots,4d\},$ through connections $g_j(\textbf{y}_\textnormal{\tiny{\textbullet}}) = \tilde{f}_j(\boldsymbol{\eta}_{\textnormal{\tiny{\textbullet}}}) ,j\in\{1,2,\ldots,2d\}$ and $g_{j+2d}(\textbf{y}_\textnormal{\tiny{\textbullet}}) = \tilde{f}_{j}(\boldsymbol{\eta}_{\textnormal{\tiny{\textbullet}}+\tau}), j\in\{1,2,\ldots,2d\}$. That is, for $j\geq 2d+1$ the function $g_j$ corresponds to $\tilde{f}_j$ evaluated at shift $\boldsymbol{\eta}_{\textnormal{\tiny{\textbullet}}+\tau}$.

The unsymmetrized autocovariance matrix estimator with lag $\tau$ is defined as,
\begin{align*}
\tilde{\textbf{S}}_\tau[\textbf{X}] = \frac{1}{T-\tau} \sum_{t=1}^{T-\tau}\left( \textbf{X}_t - \hat{\boldsymbol{\mu}} \right) \left( \textbf{X}_{t+\tau}- \hat{\boldsymbol{\mu}} \right)^\textnormal{H},
\end{align*}
where $\hat{\boldsymbol{\mu}} \coloneqq \hat{\boldsymbol{\mu}}[\textbf{X}]$. Let $\tilde{\textbf{X}} = \textbf{X} - \textbf{1}_T\boldsymbol{\mu}^\top$ and $\tilde{\boldsymbol{\mu}} =  \hat{\boldsymbol{\mu}}-\boldsymbol{\mu}$. Under Assumption \ref{ass:summablecov}, we have that the $k$th component $\tilde{\boldsymbol{X}}_t^{(k)}$ has the same asymptotic autocovariance function as $g_k(\textbf{y}_t) + ig_{k+d}(\textbf{y}_t)  - \mathbb{E}[g_k(\textbf{y}_t) + ig_{k+d}(\textbf{y}_t)]$. To improve the fluency of the proof, we denote $f_k(\textbf{y}_t) \coloneqq g_k(\textbf{y}_t) -   \mathbb{E}[g_k(\textbf{y}_t)]$.

We can  reformulate the  estimator as follows,
\begin{align*}
\tilde{\textbf{S}}_\tau[\textbf{X}] =\frac{1}{T-\tau}\sum_{t=1}^{T-\tau}&\left[\tilde{\textbf{X}}_t\tilde{\textbf{X}}^\textnormal{H}_{t+\tau} - \tilde{\textbf{X}}_t\tilde{\boldsymbol{\mu}}^\textnormal{H}-\tilde{\boldsymbol{\mu}}\tilde{\textbf{X}}^\textnormal{H}_{t+\tau} +\tilde{\boldsymbol{\mu}}\tilde{\boldsymbol{\mu}}^\textnormal{H} \right],
\end{align*}
where the last three terms of the sum are equal to,
\begin{align}
\label{eq:mean}
\frac{1}{T-\tau}\left[ \sum_{t=T-\tau+1}^T \tilde{\textbf{X}}_t\tilde{\boldsymbol{\mu}}^\textnormal{H} +\sum_{t=1}^\tau \tilde{\boldsymbol{\mu}}\tilde{\textbf{X}}_t^\textnormal{H} - (T + \tau) \tilde{\boldsymbol{\mu}}\tilde{\boldsymbol{\mu}}^\textnormal{H}\right] = \mathcal{O}_p(1/T).
\end{align}
Equation \eqref{eq:mean} holds, since the first two terms are finite sums. Furthermore, since $\mathbb{E}[\tilde{\boldsymbol{\mu}}] = \textbf{0}$ we get that, under Assumption \ref{ass:summablecov}, the $k$th component $\tilde{\boldsymbol{\mu}}^{(k)}$ is asymptotically equivalent with  $(1/T) \sum_{t=1}^T[f_k(\textbf{y}_t) - \mathbb{E}[ f_k(\textbf{y}_t)]+ i(f_{(k+d)}(\textbf{y}_t) -\mathbb{E}[ f_{k+d}(\textbf{y}_t)])]$. We can then directly apply Theorem \ref{thm:breuer-major2}, which in combination with Prohorov's theorem and Corollary \ref{cor:eqvrealcomplex}, gives that $\tilde{\boldsymbol{\mu}} = \mathcal{O}_p(1/\sqrt{T})$ and consequently, the last term of Equation \eqref{eq:mean} is $\mathcal{O}_p(1/T)$. 

The symmetrized autocovariance estimator can then be expressed as,
\begin{align*}
\hat{\textbf{S}}_\tau[\textbf{X}] = \frac{1}{2(T-\tau)} \sum_{t=1}^{T-\tau}\left( \tilde{\textbf{X}}_t\tilde{\textbf{X}}^\textnormal{H}_{t+\tau} + \tilde{\textbf{X}}_{t+\tau}\tilde{\textbf{X}}^\textnormal{H}_{t} \right) + \mathcal{O}_p(1/T),
\end{align*}
and the existence of fourth moments and the model assumptions given in Definition \ref{model:BSS} give us that,
\begin{align*}
\mathbb{E}\left[\hat{\textbf{S}}_\tau[\textbf{X}] \right] = \frac{1}{2}\left(\ddot{\boldsymbol{\Lambda}}_\tau + \ddot{\boldsymbol{\Lambda}}_\tau^\textnormal{H}\right) + \mathcal{O}(1/T),
\end{align*}
where $\ddot{\boldsymbol{\Lambda}}_0 = \textbf{I}_d$. Hereby, we have that,
\begin{align*}
\sqrt{T}\left(\hat{\textbf{S}}_0[\textbf{X}] - \textbf{I}_d\right) &=  \frac{1}{\sqrt{T}}\sum_{t=1}^{T}\left( \tilde{\textbf{X}}_t\tilde{\textbf{X}}^\textnormal{H}_{t} -  \mathbb{E}\left[ \tilde{\textbf{X}}_t\tilde{\textbf{X}}^\textnormal{H}_{t} \right] \right) + o_p(1).
\end{align*}
Note that asymptotically, it is indifferent whether we scale the autocovariance estimators with $1/T$ or $1/(T-\tau)$. Thus, for the fixed $\tau$, that satisfies Definition \ref{model:BSS},  we have that $\mathbb{E}[\boldsymbol{\Lambda}_\tau \hat{\textbf{S}}_0[\textbf{X}] - \hat{\textbf{S}}_\tau[\textbf{X}]] = \mathcal{O}(1/T)$,  which gives that,
\begin{align*}
\sqrt{T}\left(\boldsymbol{\Lambda}_\tau \hat{\textbf{S}}_0[\textbf{X}] - \hat{\textbf{S}}_\tau[\textbf{X}] \right)
\end{align*}
is equal to
\begin{align*}
\frac{1}{\sqrt{T}}\sum_{t=1}^{T}& \frac{1}{2}\left( 2\boldsymbol{\Lambda}_\tau  \tilde{\textbf{X}}_t\tilde{\textbf{X}}^\textnormal{H}_{t} - \tilde{\textbf{X}}_t\tilde{\textbf{X}}^\textnormal{H}_{t+\tau} - \tilde{\textbf{X}}_{t+\tau}\tilde{\textbf{X}}^\textnormal{H}_{t} \right. \\
&\left. - \mathbb{E}\left[2\boldsymbol{\Lambda}_\tau  \tilde{\textbf{X}}_t\tilde{\textbf{X}}^\textnormal{H}_{t} - \tilde{\textbf{X}}_t\tilde{\textbf{X}}^\textnormal{H}_{t+\tau} - \tilde{\textbf{X}}_{t+\tau}\tilde{\textbf{X}}^\textnormal{H}_{t}  \right]\right) + o_p(1),
\end{align*}
such that in the above expression the terms $\tilde{\textbf{X}}_{k+\tau} = \textbf{0}$, when $k+\tau > T $.

Let $\hat{\textbf{J}}$ be the $T$-indexed sequence that sets the diagonal elements of $\hat{\boldsymbol{\Gamma}}$ to the positive real-axis. By Theorem \ref{thm:limiting} and Assumption \ref{ass:summablecov}, we have that the diagonal element $(j,j)$ of $\sqrt{T}(  \hat{\textbf{J}}\hat{\boldsymbol{\Gamma}} - \textbf{I}_d)$ is asymptotically equivalent with $\textbf{H}_{jj} + o_p(1)$, defined as,
\begin{align*}
\textbf{H}_{jj} = \frac{1}{2\sqrt{T}} \sum_{t=1}^T \left( h_{j,j}(\textbf{y}_t) - \mathbb{E}\left[ h_{j,j}(\textbf{y}_t)  \right] + i\left(\tilde{h}_{j,j}(\textbf{y}_t) - \mathbb{E}\left[ \tilde{h}_{j,j}(\textbf{y}_t)  \right] \right)   \right),
\end{align*}
such that for $j\in\{1,\ldots d\}$ we have that $h_{j,j}(\textbf{y}_t) =   \left(f_j(\textbf{y}_t)\right)^2 + \left(f_{j+d}(\textbf{y}_t)\right)^2$ and $\tilde{h}_{j,j}(\textbf{y}_t) =  0$.

 The off-diagonal element $(j,k)$, $j\neq k$, of $\sqrt{T}(  \hat{\textbf{J}}\hat{\boldsymbol{\Gamma}} - \textbf{I}_d)$ is asymptotically equivalent with $\textbf{H}_{jk} + o_p(1)$, where $\textbf{H}_{jk}$ is equal to,
\begin{align*}
\frac{1}{ \sqrt{T} \left(\lambda^{(k)}_\tau - \lambda^{(j)}_\tau\right)}\sum_{t=1}^T \left( {h}_{j,k}(\textbf{y}_t) - \mathbb{E}\left[  {h}_{j,k}(\textbf{y}_t) \right] +  i\left(\tilde{h}_{j,k}(\textbf{y}_t)  - \mathbb{E}\left[  \tilde{h}_{j,k}(\textbf{y}_t) \right] \right)  \right),
\end{align*}
where,
\begin{align*}
{h}_{j,k}(\textbf{y}_t) =& \lambda_\tau^{(j)}\left[ f_j(\textbf{y}_t)f_k(\textbf{y}_t) + f_{j+d}(\textbf{y}_t)f_{k+d}(\textbf{y}_t)   \right]
- \frac{1}{2} \left[ f_{j}(\textbf{y}_t)f_{k+2d}(\textbf{y}_t) \right. \\
& \left. +f_{j+d}(\textbf{y}_t) f_{k+3d}(\textbf{y}_t) + f_{j+2d}(\textbf{y}_t) f_{k}(\textbf{y}_t) + f_{j+3d}(\textbf{y}_t) f_{k+d}(\textbf{y}_t)\right],
\end{align*}
and,
\begin{align*}
\tilde{h}_{j,k}(\textbf{y}_t) =& \lambda_\tau^{(j)}\left[ f_{j+d}(\textbf{y}_t)f_k(\textbf{y}_t) - f_{j}(\textbf{y}_t)f_{k+d}(\textbf{y}_t)   \right]
-\frac{1}{2}\left[ f_{j+d}(\textbf{y}_t)f_{k+2d}(\textbf{y}_t) \right.\\
& \left. + f_{j+3d}(\textbf{y}_t) f_{k}(\textbf{y}_t) - f_{j}(\textbf{y}_t) f_{k+3d}(\textbf{y}_t) - f_{j+2d}(\textbf{y}_t) f_{k+d}(\textbf{y}_t)\right].
\end{align*}
Let $\mathfrak{Re}[\textbf{H}]$ be the real-part and let $\mathfrak{Im}[\textbf{H}]$ be the imaginary-part of the $\mathbb{C}^{d\times d}$-valued matrix $\textbf{H}$ and let $\textbf{v} = \textnormal{vec}\begin{pmatrix}\mathfrak{Re}[\textbf{H}] &  \mathfrak{Im}[\textbf{H}] \end{pmatrix}$, such that the $\mathbb{C}^{2d^2}$-valued vector $\textbf{v}$ first contains the columns of the real-part and then the columns of the imaginary-part.

Under Assumption \ref{ass:summablecov}, we have that $f_{j}(\textbf{y}_t)$ has finite fourth moments for every $j \in\{1,\ldots 4d\}$ and every $t\in\mathbb{N}$. Hereby, Cauchy-Schwarz inequality gives that every $h_{j,k}(\textbf{y}_t)$ and $\tilde{h}_{j,k}(\textbf{y}_t)$ are square-integrable for every $t\in \mathbb{N}$. Furthermore, covariances and cross-covariances of the Gaussian process $\textbf{y}_\textnormal{\tiny{\textbullet}}$ are summable. Hereby, we can apply Theorem \ref{thm:breuer-major2} for $\textbf{v}$, which gives that,
\begin{align*}
\textbf{v} \xrightarrow[T\rightarrow \infty ]{\mathcal{D}} \boldsymbol{\rho} \sim \mathcal{N}_{2d^2}(\textbf{0}, \boldsymbol{\Sigma}_{\boldsymbol{\rho}}), \quad \textnormal{ where } \quad \boldsymbol{\Sigma}_{\boldsymbol{\rho}} = \begin{pmatrix} \boldsymbol{\Sigma}_{\boldsymbol{\rho}_1} & \boldsymbol{\Sigma}_{\boldsymbol{\rho}_{12}} \\ \boldsymbol{\Sigma}_{\boldsymbol{\rho}_{12}}^\top & \boldsymbol{\Sigma}_{\boldsymbol{\rho}_2} \end{pmatrix}.
\end{align*}
Corollary \ref{cor:eqvrealcomplex} then gives us,
\begin{align*}
\sqrt{T} \cdot \textnormal{vec}\left(\xhatt{\textbf{J}}\hat{\boldsymbol{\Gamma}} - \textbf{I}_d\right) \xrightarrow[T\rightarrow \infty]{\mathcal{D}} \boldsymbol{\nu} \sim \mathcal{N}_{d^2}(\textbf{0}, \boldsymbol{\Sigma}_{\boldsymbol{\nu}}, \textbf{P}_{\boldsymbol{\nu}}),
\end{align*}
where $\boldsymbol{\Sigma}_{\boldsymbol{\nu}} = \boldsymbol{\Sigma}_{\boldsymbol{\rho}_1}  +\boldsymbol{\Sigma}_{\boldsymbol{\rho}_2} + i(\boldsymbol{\Sigma}_{\boldsymbol{\rho}_{12}}^\top - \boldsymbol{\Sigma}_{\boldsymbol{\rho}_{12}})$ and $ \textbf{P}_{\boldsymbol{\nu}} = \boldsymbol{\Sigma}_{\boldsymbol{\rho}_1} - \boldsymbol{\Sigma}_{\boldsymbol{\rho}_2} + i(\boldsymbol{\Sigma}_{\boldsymbol{\rho}_{12}}^\top + \boldsymbol{\Sigma}_{\boldsymbol{\rho}_{12}})$. By denoting $\boldsymbol{\nu}^\top = \begin{pmatrix} \nu_{1,1} & \nu_{2,1} \cdots \nu_{d,d} \end{pmatrix}$, we get that $\boldsymbol{\Sigma}_{\boldsymbol{\nu}}$ has entries of the form,
\begin{align*}
\mathbb{E}\left[ \nu_{j,k} \nu_{l,m}^* \right] &= \textnormal{S}_0\left[ {h}_{j,k}(\textbf{y}_1), {h}_{l,m}(\textbf{y}_1 )\right] +  \textnormal{S}_0\left[ \tilde{{h}}_{j,k}(\textbf{y}_1), \tilde{h}_{l,m}(\textbf{y}_1 )\right]\\
& + \sum_{\tau=1}^\infty \left(  \textnormal{R}_\tau\left[ {h}_{j,k}(\textbf{y}_1), {h}_{l,m}(\textbf{y}_1 )\right] + \textnormal{R}_\tau\left[ \tilde{h}_{j,k}(\textbf{y}_1), \tilde{h}_{l,m}(\textbf{y}_1 )\right]  \right)\\
& + i\left(\textnormal{S}_0\left[ \tilde{{h}}_{j,k}(\textbf{y}_1), {h}_{l,m}(\textbf{y}_1 )\right]  -\textnormal{S}_0\left[ {h}_{j,k}(\textbf{y}_1), \tilde{h}_{l,m}(\textbf{y}_1 )\right]     \right)\\
&+i \sum_{\tau=1}^\infty \left(  \textnormal{R}_\tau\left[ \tilde{h}_{j,k}(\textbf{y}_1), {h}_{l,m}(\textbf{y}_1 )\right] - \textnormal{R}_\tau\left[ {h}_{j,k}(\textbf{y}_1), \tilde{h}_{l,m}(\textbf{y}_1 )\right]  \right),
\end{align*}
where $\textnormal{S}_0$ and $\textnormal{R}_\tau$ are defined in Section \ref{sec:ctimeseries}. The elements of the relation matrix $\textbf{P}_{\boldsymbol{\nu}}$ have the form,
\begin{align*}
\mathbb{E}\left[ \nu_{j,k} \nu_{l,m} \right] &= \textnormal{S}_0\left[ {h}_{j,k}(\textbf{y}_1), {h}_{l,m}(\textbf{y}_1 )\right] -  \textnormal{S}_0\left[ \tilde{{h}}_{j,k}(\textbf{y}_1), \tilde{h}_{l,m}(\textbf{y}_1 )\right]\\
& + \sum_{\tau=1}^\infty \left(  \textnormal{R}_\tau\left[ {h}_{j,k}(\textbf{y}_1), {h}_{l,m}(\textbf{y}_1 )\right] - \textnormal{R}_\tau\left[ \tilde{h}_{j,k}(\textbf{y}_1), \tilde{h}_{l,m}(\textbf{y}_1 )\right]  \right)\\
& + i\left(\textnormal{S}_0\left[ \tilde{{h}}_{j,k}(\textbf{y}_1), {h}_{l,m}(\textbf{y}_1 )\right]  +\textnormal{S}_0\left[ {h}_{j,k}(\textbf{y}_1), \tilde{h}_{l,m}(\textbf{y}_1 )\right]     \right)\\
&+i \sum_{\tau=1}^\infty \left(  \textnormal{R}_\tau\left[ \tilde{h}_{j,k}(\textbf{y}_1), {h}_{l,m}(\textbf{y}_1 )\right] + \textnormal{R}_\tau\left[ {h}_{j,k}(\textbf{y}_1), \tilde{h}_{l,m}(\textbf{y}_1 )\right]  \right).
\end{align*}
Note that the diagonal elements of $\boldsymbol{\Sigma}_{\boldsymbol{\nu}}$ are real-valued and recall that $\tilde{h}_{j,j} =0$ for every $j\in\{1,\ldots d\}$.
\end{proof}

\begin{proof}[Proof of Proposition \ref{pro:cross-disappear}]
With $\gamma$ given by Equation \eqref{eq:rate-gamma}, Condition \eqref{eq:maximum} translates into
\begin{equation}
\label{eq:gamma-bound}
\gamma < \min_{j,k\in I}\left\{ q_{1,k}(1-H_k)+q_{1,j}(1-H_j), 1/2\right\}.
\end{equation}

Using independence of the processes $\boldsymbol{\eta}^{(k)}_{\textnormal{\tiny{\textbullet}}}$ and $\boldsymbol{\eta}^{(j)}_{\textnormal{\tiny{\textbullet}}}$ together with straightforward computations, we obtain
\begin{equation*}
\begin{split}
&\mathbb{E}\left[T^{\gamma-1}\sum_{t=1}^T \left[ \left(\tilde{f}_k(\boldsymbol{\eta}^{(k)}_t)-\mathbb{E}\left(\tilde{f}_k(\boldsymbol{\eta}^{(k)}_t)\right)\right) \left(\tilde{f}_j(\boldsymbol{\eta}^{(j)}_t)-\mathbb{E}\left(\tilde{f}_j(\boldsymbol{\eta}^{(j)}_t)\right)\right)\right]\right]^2 \\
& = T^{2\gamma-2}\sum_{t,s=1}^T \textnormal{S}_0\left[\tilde{f}_k(\boldsymbol{\eta}^{(k)}_t),\tilde{f}_k(\boldsymbol{\eta}^{(k)}_s)\right]\textnormal{S}_0\left[\tilde{f}_j(\boldsymbol{\eta}^{(j)}_t),\tilde{f}_j(\boldsymbol{\eta}^{(j)}_s)\right] \\
&\leq c_1T^{2\gamma-1}\sum_{t=1}^T \left|\textnormal{S}_0\left[\tilde{f}_k(\boldsymbol{\eta}^{(k)}_t),\tilde{f}_k(\boldsymbol{\eta}^{(k)}_1)\right]\textnormal{S}_0\left[\tilde{f}_j(\boldsymbol{\eta}^{(j)}_t),\tilde{f}_j(\boldsymbol{\eta}^{(j)}_1)\right]\right|,
\end{split}
\end{equation*}
where $c_1 \in (0,\infty)$ and the inequality follows from change-of-variables and Fubini-Tonelli theorem.  Note that if
$$
\sum_{t=1}^\infty \left|\textnormal{S}_0\left(\tilde{f}_k(\boldsymbol{\eta}^{(k)}_t),\tilde{f}_k(\boldsymbol{\eta}^{(k)}_1)\right)\textnormal{S}_0\left(\tilde{f}_j(\boldsymbol{\eta}^{(j)}_t),\tilde{f}_j(\boldsymbol{\eta}^{(j)}_1)\right)\right| < +\infty,
$$
the claim follows, since by Equation \eqref{eq:gamma-bound} we have that $\gamma < 1/2$. In particular, this is the case if $f_k(\boldsymbol{\eta}^{(k)})$ or $f_j(\boldsymbol{\eta}^{(j)})$ is short-range dependent. Hence we may assume that $j,k\in I$ and that
$
q_{1,k}(2H_k-2) + q_{1,j}(2H_j-2)\geq -1.
$
Let first
$
q_{1,k}(2H_k-2) + q_{1,j}(2H_j-2)>-1.
$
In this case we get
\begin{equation*}
\begin{split}
& T^{2\gamma-1}\sum_{t=1}^T \left|\textnormal{S}_0\left(\tilde{f}_k(\boldsymbol{\eta}^{(k)}_t),\tilde{f}_k(\boldsymbol{\eta}^{(k)}_1)\right)\textnormal{S}_0\left(\tilde{f}_j(\boldsymbol{\eta}^{(j)}_t),\tilde{f}_j(\boldsymbol{\eta}^{(j)}_1)\right)\right|\\
&\leq c_2T^{2\gamma-1}\sum_{t=1}^T t^{q_{1,k}(2H_k-2) + q_{1,j}(2H_j-2)}\\
&\leq c_2T^{2\gamma +q_{1,k}(2H_k-2) + q_{1,j}(2H_j-2)}
\end{split}
\end{equation*}
which converges to zero by Equation \eqref{eq:gamma-bound}  and $c_2 \in (0,\infty)$.  Similarly, if
$
q_{1,k}(2H_k-2) + q_{1,j}(2H_j-2)= -1
$
we obtain
\begin{equation*}
\begin{split}
& T^{2\gamma-1}\sum_{t=1}^T \left|\textnormal{S}_0\left(\tilde{f}_k(\boldsymbol{\eta}^{(k)}_t),\tilde{f}_k(\boldsymbol{\eta}^{(k)}_1)\right)\textnormal{S}_0\left(\tilde{f}_j(\boldsymbol{\eta}^{(j)}_t),\tilde{f}_j(\boldsymbol{\eta}^{(j)}_1)\right)\right|\\
&\leq c_3 T^{2\gamma-1}\sum_{t=1}^T t^{-1}\leq c_4 T^{2\gamma -1}\log T,
\end{split}
\end{equation*}
which converges to zero since $\gamma < \frac12$ and $c_3, c_4 \in (0, \infty)$.
This verifies Equation \eqref{eq:cross-disappear1}. Treating Equation \eqref{eq:cross-disappear2} similarly concludes the proof.
\end{proof}
\begin{proof}[Proof of Lemma \ref{lma:mu-conv}]
Recall that the real part of the $k$:th component $\tilde{\boldsymbol{\mu}}^{(k)}$ is given by,
$$
\frac{1}{T}\sum_{k=1}^T \left[ \tilde{f}_k(\boldsymbol{\eta}^{(k)}_t)-\mathbb{E}\left(\tilde{f}_k(\boldsymbol{\eta}^{(k)}_t)\right) \right].
$$
As in the proof of Proposition \ref{pro:cross-disappear}, we may assume $k\in I$ and that $q_{1,k}(2H_k-2)>-1$, in which case we have,
\begin{equation*}
\begin{split}
\mathbb{E}\left[ \left(\mathfrak{Re}(\tilde{\boldsymbol{\mu}}^{(k)})\right)^2\right] &\leq \frac{c_1}{T} \sum_{t=1}^T \left|S_0\left[\tilde{f}_k(\boldsymbol{\eta}^{(k)}_t),\tilde{f}_k(\boldsymbol{\eta}^{(k)}_1)\right]\right|\\
&\leq \frac{c_2}{T}\sum_{t=1}^T t^{q_{1,k}(2H_k-2)}
\leq c_3T^{q_{1,k}(2H_k-2)},
\end{split}
\end{equation*}
where $c_1$, $c_2, c_3 \in (0,\infty)$. Similarly, for $j\in I$ and $c_4 \in (0,\infty)$,  we  obtain
$$
\mathbb{E}\left[ \left(\mathfrak{Re}(\tilde{\boldsymbol{\mu}}^{(j)})\right)^2\right]\leq c_4 T^{q_{1,j}(2H_j-2)}.
$$
Hereby, Cauchy-Schwartz inequality yields
\begin{align*}
\mathbb{E}\left[\left|T^{\gamma}\mathfrak{Re}(\tilde{\boldsymbol{\mu}}^{(k)}) \mathfrak{Re}(\tilde{\boldsymbol{\mu}}^{(j)})\right| \right] &\leq  T^{\gamma}\sqrt{\mathbb{E}\left[ \left(\mathfrak{Re}(\tilde{\boldsymbol{\mu}}^{(k)})\right)^2\right]\mathbb{E}\left[ \left(\mathfrak{Re}(\tilde{\boldsymbol{\mu}}^{(j)})\right)^2\right]}\\
&\leq  c_5 T^{\gamma}T^{q_{1,k}(H_k-1) + q_{1,j}(H_j-1)},
\end{align*}
which converges to zero by Equation \eqref{eq:gamma-bound} and $c_5 \in (0,\infty)$. Note that identical arguments remain valid for the imaginary parts. Finally, the short-range dependent cases and the boundary cases $q_{1,k}(2H_k-2)=-1$ can be treated similarly as in the proof of Proposition \ref{pro:cross-disappear}.
\end{proof}

\begin{proof}[Proof of Theorem \ref{thm:limit-longrange}]
Similarly as in the proof of Theorem \ref{thm:covconvergence}, we set
\begin{align*}
 \textbf{y}^\top_\textnormal{\tiny{\textbullet}} = \begin{pmatrix}\boldsymbol{\eta}^{(1)}_{\textnormal{\tiny{\textbullet}}} & \boldsymbol{\eta}^{(2)}_{\textnormal{\tiny{\textbullet}}} & \ldots & \boldsymbol{\eta}^{(2d)}_{\textnormal{\tiny{\textbullet}}} & \boldsymbol{\eta}^{(1)}_{\textnormal{\tiny{\textbullet}}+\tau} & \boldsymbol{\eta}^{(2)}_{\textnormal{\tiny{\textbullet}}+\tau} &  \ldots  & \boldsymbol{\eta}^{(2d)}_{\textnormal{\tiny{\textbullet}}+\tau}  \end{pmatrix}.\end{align*} We also introduce functions $g_j(\textbf{y}_\textnormal{\tiny{\textbullet}}):\mathbb{R}^{4d}\rightarrow \mathbb{R},  j\in\{1,2,\ldots,4d\},$ through connections $g_j(\textbf{y}_\textnormal{\tiny{\textbullet}}) = \tilde{f}_j(\boldsymbol{\eta}^{(j)}_{\textnormal{\tiny{\textbullet}}}) ,j\in\{1,2,\ldots,2d\}$, and $g_{j+2d}(\textbf{y}_\textnormal{\tiny{\textbullet}}) = \tilde{f}_{j}(\boldsymbol{\eta}^{(j)}_{\textnormal{\tiny{\textbullet}}+\tau}),$ $ j\in\{1,2,\ldots,2d\}$. 
Moreover, we denote $f_k(\textbf{y}_t) \coloneqq g_k(\textbf{y}_t) -   \mathbb{E}[g_k(\textbf{y}_t)]$, which gives that $\mathbb{E}\left[f_k(\textbf{y}_t)\right] = 0$ for all $k\in\{1,2,\ldots, 4d\}$.

By following the proof of Theorem \ref{thm:covconvergence}, we can express the symmetrized autocovariance matrix estimator as
\begin{align*}
\hat{\textbf{S}}_\tau[\textbf{X}] = \frac{1}{2(T-\tau)} \sum_{t=1}^{T-\tau}\left( \tilde{\textbf{X}}_t\tilde{\textbf{X}}^\textnormal{H}_{t+\tau} + \tilde{\textbf{X}}_{t+\tau}\tilde{\textbf{X}}^\textnormal{H}_{t} \right) + \tilde{\boldsymbol{\mu}}\tilde{\boldsymbol{\mu}}^{\textnormal{H}}+ \mathcal{O}_p(1/T).
\end{align*}
We begin by showing that the off-diagonal terms vanish. Following the proof of Theorem \ref{thm:covconvergence}, we obtain
\begin{align*}
T^{\gamma}\left(\boldsymbol{\Lambda}_\tau \hat{\textbf{S}}_0[\textbf{X}] - \hat{\textbf{S}}_\tau[\textbf{X}] \right),
\end{align*}
which is equal to,
\begin{align*}
T^{\gamma-1}\sum_{t=1}^{T}& \frac{1}{2}\left( 2\boldsymbol{\Lambda}_\tau  \tilde{\textbf{X}}_t\tilde{\textbf{X}}^\textnormal{H}_{t} - \tilde{\textbf{X}}_t\tilde{\textbf{X}}^\textnormal{H}_{t+\tau} - \tilde{\textbf{X}}_{t+\tau}\tilde{\textbf{X}}^\textnormal{H}_{t} \right) + T^{\gamma}\left(\boldsymbol{\Lambda}_\tau - \textbf{I}_d\right)\tilde{\boldsymbol{\mu}}\tilde{\boldsymbol{\mu}}^{\textnormal{H}} + o_p(1),
\end{align*}
with the convention $\tilde{\textbf{X}}_{k+\tau} = \textbf{0}$, when $k+\tau > T $. By Lemma \ref{lma:mu-conv}, the off-diagonal elements of $T^{\gamma}\tilde{\boldsymbol{\mu}}\tilde{\boldsymbol{\mu}}^{\textnormal{H}}$ vanish. Thus, the off-diagonal element $(j,k)$, $j\neq k$, of $T^{\gamma}(  \hat{\textbf{J}}\hat{\boldsymbol{\Gamma}} - \textbf{I}_d)$ is asymptotically equivalent with $\textbf{H}_{jk} + o_p(1)$,where, independence and the proof of Theorem \ref{thm:covconvergence}, give that,
\begin{align*}
\textbf{H}_{jk} = \frac{T^{\gamma-1}}{\lambda_\tau^{(k)}-\lambda_\tau^{(j)}} \sum_{t=1}^T \left( {h}_{j,k}(\textbf{y}_t) +  i\tilde{h}_{j,k}(\textbf{y}_t)  \right),
\end{align*}
with
\begin{align*}
{h}_{j,k}(\textbf{y}_t) =& \lambda_\tau^{(j)}\left[ f_j(\textbf{y}_t)f_k(\textbf{y}_t) + f_{j+d}(\textbf{y}_t)f_{k+d}(\textbf{y}_t)   \right]
- \frac{1}{2} \left[ f_{j}(\textbf{y}_t)f_{k+2d}(\textbf{y}_t) \right. \\
& \left. +f_{j+d}(\textbf{y}_t) f_{k+3d}(\textbf{y}_t) + f_{j+2d}(\textbf{y}_t) f_{k}(\textbf{y}_t) + f_{j+3d}(\textbf{y}_t) f_{k+d}(\textbf{y}_t)\right],
\end{align*}
and,
\begin{align*}
\tilde{h}_{j,k}(\textbf{y}_t) =& \lambda_\tau^{(j)}\left[ f_{j+d}(\textbf{y}_t)f_k(\textbf{y}_t) - f_{j}(\textbf{y}_t)f_{k+d}(\textbf{y}_t)   \right]
-\frac{1}{2}\left[ f_{j+d}(\textbf{y}_t)f_{k+2d}(\textbf{y}_t) \right.\\
& \left. + f_{j+3d}(\textbf{y}_t) f_{k}(\textbf{y}_t) - f_{j}(\textbf{y}_t) f_{k+3d}(\textbf{y}_t) - f_{j+2d}(\textbf{y}_t) f_{k+d}(\textbf{y}_t)\right].
\end{align*}
Thus, by Proposition \ref{pro:cross-disappear}, $\textbf{H}_{jk}$ converges to zero in probability. Hence it remains to prove the convergence of the diagonal elements. We can write,
\begin{align*}
T^{\gamma}\left(\hat{\textbf{S}}_0[\textbf{X}] - \textbf{I}_d\right) &=  T^{\gamma-1}\sum_{t=1}^{T}\left( \tilde{\textbf{X}}_t\tilde{\textbf{X}}^\textnormal{H}_{t} -  \mathbb{E}\left[ \tilde{\textbf{X}}_t\tilde{\textbf{X}}^\textnormal{H}_{t} \right] \right) +T^{\gamma}\tilde{\boldsymbol{\mu}}\tilde{\boldsymbol{\mu}}^{\textnormal{H}} + o_p(1).
\end{align*}
Thus, the diagonal element $(j,j)$ of $T^{\gamma}(  \hat{\textbf{J}}\hat{\boldsymbol{\Gamma}} - \textbf{I}_d)$ is asymptotically equivalent with
$\frac{1}{2}\textbf{H}_{jj} + \frac{1}{2}T^{\gamma}\left(\tilde{\boldsymbol{\mu}}\tilde{\boldsymbol{\mu}}^{\textnormal{H}}\right)_{jj}+ o_p(1)$, where
\begin{align*}
\textbf{H}_{jj} = T^{\gamma-1} \sum_{t=1}^T \left( h_{j,j}(\textbf{y}_t) - \mathbb{E}\left[ h_{j,j}(\textbf{y}_t)  \right]\right),
\end{align*}
with $h_{j,j}(\textbf{y}_t) =   \left(f_j(\textbf{y}_t)\right)^2 + \left(f_{j+d}(\textbf{y}_t)\right)^2$,
and
$$
\left[\tilde{\boldsymbol{\mu}}\tilde{\boldsymbol{\mu}}^{\textnormal{H}}\right]_{jj} = \left(\mathfrak{Re}[\tilde{\boldsymbol{\mu}}^{(j)}]\right)^2 +\left( \mathfrak{Im}[\tilde{\boldsymbol{\mu}}^{(j)}]\right)^2.
$$
Recall that $\mathbb{E}[ f_j(\textbf{y}_t)]=0$, 
$
\mathfrak{Re}\left(\tilde{\boldsymbol{\mu}}^{(j)}\right) = \frac{1}{T} \sum_{t=1}^T f_j(\textbf{y}_t)
$
and
$
\mathfrak{Im}\left(\tilde{\boldsymbol{\mu}}^{(j)}\right) = \frac{1}{T} \sum_{t=1}^T f_{j+d}(\textbf{y}_t).
$
Hence, due to independence, it suffices to prove that, for every $j\in\{1,2,\ldots,d\}$, we have
\begin{align*}
&T^{\gamma-1} \! \sum_{t=1}^T \left( h_{j,j}(\textbf{y}_t) - \mathbb{E}\left[ h_{j,j}(\textbf{y}_t)  \right]\right) + T^{\gamma-2}\left(\left[\sum_{t=1}^T f_j(\textbf{y}_t)\right]^2  + \left[ \sum_{t=1}^T f_{j+d}(\textbf{y}_t)\right]^2\right) \\
&\xrightarrow[T\rightarrow \infty]{\mathcal{D}} C_{2,j}Z_{q_{2,j}} + C_{1,j}^2Z^2_{q_{1,j}} + C_{2,j+d}Z_{q_{2,j+d}} + C_{1,j+d}^2Z^2_{q_{1,j+d}}.
\end{align*}
The convergence follows from independence and the continuous mapping theorem, if, $\forall j \in \{1,\ldots, 2d\}$, the following two-dimensional vector converges,
\begin{align}
\label{eq:needed-vector-convergence}
\begin{pmatrix}T^{\gamma-1}\sum_{t=1}^T\left( (f_{j}(\textbf{y}_t))^2 - \mathbb{E}\left[ (f_{j}(\textbf{y}_t))^2  \right]\right)\\[0.3em]
  T^{\frac{\gamma}{2}-1}\sum_{t=1}^T f_j(\textbf{y}_t)\end{pmatrix}\xrightarrow[T\rightarrow \infty]{\mathcal{D}} \begin{pmatrix}C_{2,j}Z_{q_{2,j}} \\ C_{1,j} Z_{q_{1,j}}\end{pmatrix}.
\end{align}
 By using Equation \eqref{eq:long-range-variance} for the asymptotic variance, we observe first that
$$
T^{\gamma-1} \sum_{t=1}^T \left( (f_{j}(\textbf{y}_t))^2 - \mathbb{E}\left[ (f_{j}(\textbf{y}_t))^2  \right]\right)
$$
converges to a non-trivial limit only if $\max_{i\in I}q_{2,i}(2H_i-2) = q_{2,j}(2H_j-2)$. Similarly,
$$
T^{\frac{\gamma}{2}-1}\sum_{t=1}^T f_j(\textbf{y}_t)
$$
converges to a non-trivial limit only if
$\max_{i\in I}q_{2,i}(2H_i-2) = q_{1,j}(4H_j-4)$. Finally, if both of these conditions are satisfied, the convergence in Equation \eqref{eq:needed-vector-convergence} follows from \cite[Theorem 4]{bai-taq2}.
\end{proof}

\bibliographystyle{abbrvnat}
\bibliography{tempuncorcomp}

\end{document}